\title[Milnor's concordance invariants for knots on surfaces]{Milnor's concordance invariants for knots on surfaces}
\author[M. Chrisman]{Micah Chrisman}
\address{Department of Mathematics, The Ohio State University, Marion Campus, Marion, Ohio, 43302}
\email{chrisman.76@osu.edu}
\urladdr{https://micah46.wixsite.com/micahknots}
\newtheorem{theorem}{Theorem}[subsection]
\newtheorem{lemma}[theorem]{Lemma}
\newtheorem{proposition}[theorem]{Proposition}
\newtheorem{corollary}[theorem]{Corollary}
\theoremstyle{definition}
\newtheorem{definition}[theorem]{Definition}
\newtheorem{example}[theorem]{Example}
\theoremstyle{remark}
\newtheorem{remark}[theorem]{Remark}
\newcommand\cyr
\renewcommand\rmdefault{wncyr} \renewcommand\sfdefault{wncyss} \renewcommand\encodingdefault{OT2} \normalfont
\DeclareTextFontCommand{\textcyr}{\cyr}
\newcommand{\Zh}{\hbox{\hspace{-5mm} \textcyr{Zh}}} 
\newcommand{\zh}{\hbox{\hspace{-4.6mm} \textcyr{zh}}} 
\let\textcyr\relax}
\newcommand{\lk}{\operatorname{\ell{\it k}}}
\newcommand{\vlk}{\operatorname{{\it v}\ell{\it k}}}
\begin{document}

\begin{abstract} Milnor's $\bar{\mu}$-invariants of links in the $3$-sphere $S^3$ vanish on any link concordant to a boundary link. In particular, they are trivial on any knot in $S^3$. Here we consider knots in thickened surfaces $\Sigma \times [0,1]$, where $\Sigma$ is closed and oriented. We construct new concordance invariants by adapting the Chen-Milnor theory of links in $S^3$ to an extension of the group of a virtual knot. A key ingredient is the Bar-Natan $\Zh$ map, which allows for a geometric interpretation of the group extension. The group extension itself was originally defined by Silver-Williams. Our extended $\bar{\mu}$-invariants obstruct concordance to homologically trivial knots in thickened surfaces. We use them to give new examples of non-slice virtual knots having trivial Rasmussen invariant, graded genus, affine index (or writhe) polynomial, and generalized Alexander polynomial. Furthermore, we complete the slice status classification of all virtual knots up to five classical crossings and reduce to four (out of 92800) the number of virtual knots up to six classical crossings having unknown slice status. 

Our main application is to Turaev's concordance group $\mathscr{VC}$ of long knots on surfaces. Boden and Nagel proved that the concordance group $\mathscr{C}$ of classical knots in $S^3$ embeds into the center of $\mathscr{VC}$. In contrast to the classical knot concordance group, we show $\mathscr{VC}$ is not abelian; answering a question posed by Turaev. 
\end{abstract}
\maketitle

\section{Overview}
For an $m$-component link $L$ in the $3$-sphere $S^3$,  Milnor \cite{milnor} introduced an infinite family of link isotopy invariants called the $\bar{\mu}$-invariants. Collectively they determine how deep the longitudes of $L$ lie in the lower central series of $\pi_1(S^3\smallsetminus L)$. Milnor's invariants are well-defined only up to a certain indeterminacy and give integer valued invariants when all lower order invariants are trivial. The $\bar{\mu}$-invariants are also invariants of link concordance. The set of $\bar{\mu}$-invariants vanishes for any link that is concordant to a boundary link, i.e. a link whose components bound pairwise disjoint Seifert surfaces. In particular, the $\bar{\mu}$-invariants are trivial for all knots in $S^3$. The aim of this paper is to construct an extension of Milnor's concordance invariants to knots in thickened surfaces $\Sigma \times I$, where $\Sigma$ is closed and oriented and $I=[0,1]$. The extended $\bar{\mu}$-invariants introduced here obstruct concordance to homologically trivial knots in $\Sigma \times I$.

By concordance of knots in $\Sigma \times I$, we mean concordance in the sense of Turaev \cite{turaev_cobordism}. Let $\Sigma,\Sigma^*$ be closed oriented surfaces and $K\subset \Sigma \times I$, $K^*\subset \Sigma^* \times I$ be oriented knots. Then $K$ and $K^*$ are said to be \emph{virtually concordant} if there is a compact oriented $3$-manifold $W$ and a smoothly and properly embedded annulus $\Lambda \subset W \times I$ such that $\partial W=\Sigma^* \sqcup -\Sigma$ and $\partial \Lambda=K^* \sqcup -K$ (see Figure \ref{fig_conc_defn}). The minus signs here indicate a change in the given orientation. A knot is called \emph{virtually slice} if it is concordant to the unknot in the thickened $2$-sphere. 

\begin{figure}[htb]
\tiny
\centerline{
\def\svgwidth{2.7in}
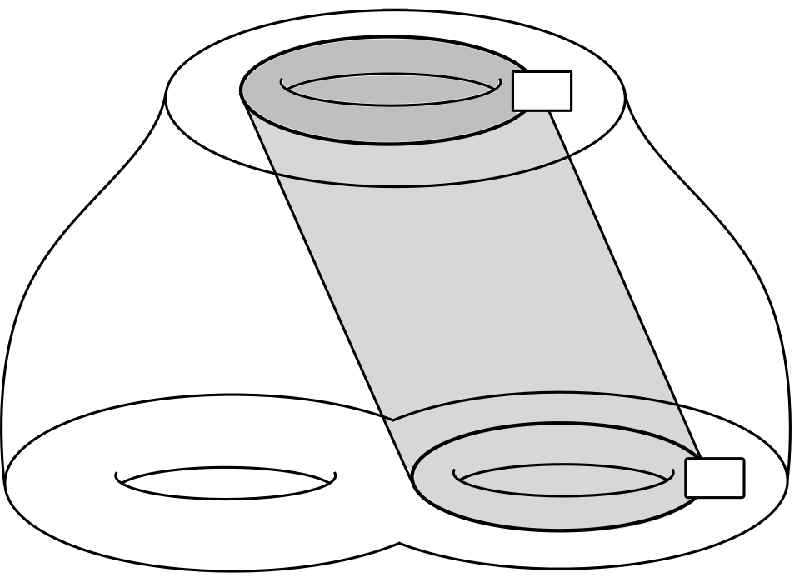} 
\normalsize
\caption{Schematic depiction of a virtual concordance.}
\label{fig_conc_defn}
\end{figure}

In the smooth category, virtual concordance is equivalent to concordance of virtual knots, as defined by Kauffman \cite{lou_cob}. Two main applications of the extended Milnor invariants are given. The first application is to the study of Turaev's group of long knots on surfaces, or equivalently, the concordance group $\mathscr{VC}$ of long virtual knots (see Definition \ref{defn_vc} ahead). Secondly, we continue the program begun by Boden-Chrisman-Gaurdreau \cite{bcg1,bcg2} and Rushworth \cite{rush} of determining the slice status of the 92800 virtual knots from Green's table \cite{green} having at most six classical crossings. Using the extended $\bar{\mu}$-invariants, we reduce the number of virtual knots having unknown slice status to four. The key ideas and their motivations are sketched below. Definitions of terms used in the remainder of this section are presented mostly in Section \ref{sec_virt_weld_ribbon} and otherwise as needed throughout the text. Henceforth, decimal numbers will correspond to virtual knots in Green's table, so that $3.5$ refers to the fifth 3-crossing entry therein.

\subsection{The invariants} Let $K$ be a virtual knot diagram and $G(K)$ its group. The main tool used is an extension $\widetilde{G}(K)$ of $G(K)$. The extension $\widetilde{G}(K)$ is originally due to Silver-Williams \cite{silwill0,silwill1,silwill}. As in the case of classical knots, the nilpotent quotients of $G(K)$ are always free and thus do not provide any useful slice obstructions. Here we will show that the nilpotent quotients of $\widetilde{G}(K)$ are generally not free. Hence, by adapting the Chen-Milnor theory of links in $S^3$ to the nilpotent quotients of $\widetilde{G}(K)$, we can construct new concordance invariants for virtual knots. They collectively determine how deep certain elements of $\widetilde{G}(K)$, called \emph{extended longitudes}, live in the lower central series. For a long virtual knot $\vec{K}$, we derive an extended Artin representation from $\widetilde{G}(\vec{K})$. Following Habegger-Lin \cite{HL2}, we obtain new concordance invariants of long virtual knots from certain automorphisms of these nilpotent quotients.

\subsection{Application: $\mathscr{VC}$ is not abelian} It is well-known that the smooth concordances classes of knots in $S^3$ form an abelian group $\mathscr{C}$ under the connected sum operation. In \cite{boden_nagel}, Boden and Nagel proved that $\mathscr{C}$ embeds as a subgroup into the center of the long virtual knot concordance group $\mathscr{VC}$. 

While the structure of the classical knot concordance group has been extensively studied in both the smooth and topological categories, little is known about the structure of $\mathscr{VC}$. Cochran, Orr, and Teichner \cite{COT} showed there is a filtration $\mathscr{C}^{\text{top}} \supset\mathscr{F}_{0} \supset \mathscr{F}_{.5}\supset F_{1} \supset \cdots$ of the topological concordance group of classical knots in $S^3$. The quotient $\mathscr{C}^{\text{top}}/\mathscr{F}_0 \cong \mathbb{Z}_2$ corresponds to the two band-pass classes of knots: those that are band-pass equivalent to the trefoil and those that are band-pass equivalent to the unknot. On the other hand, for every concordance class $[K]\in\mathscr{VC}$, there is a $J\in [K]$ that is not band-pass equivalent to $K$ (Chrisman \cite{band_pass}). Hence, concordance does not guarantee band-pass equivalence for long virtual knots. Furthermore, there are more than two band-pass equivalence classes in the virtual case: for every $[K]$, there is an $L\in [K]$ that is not band-pass equivalent to either the unknot or the trefoil. This divergence from the classical case suggests that the structure of $\mathscr{VC}$ is in need of deeper study.

Here we show that in contrast to the classical knot concordance group, $\mathscr{VC}$ is not abelian. This answers a question posed by Turaev (see \cite{turaev_cobordism}, Section 6.5). Previously, Manturov \cite{manturov_long} proved that the equivalence classes of long virtual knots form a non-commutative monoid.  Our result shows that non-commutativity also holds under the coarser relation of virtual concordance.

\subsection{Application: virtual slice obstructions} A virtual knot is said to be \emph{almost classical (AC)} if it can be represented by a homologically trivial knot in some thickened surface $\Sigma \times I$. In \cite{bbc}, Boden and the author showed that the generalized Alexander polynomial vanishes on any virtual knot that is concordant to an AC knot. Here we prove that the extended $\bar{\mu}$-invariants also vanish on any virtual knot concordant to an AC knot. Furthermore, it will be shown that the vanishing of the extended $\bar{\mu}$-invariants implies the vanishing of the generalized Alexander polynomial. As slice obstructions, the extended $\bar{\mu}$-invariants are therefore at least strong as the generalized Alexander polynomial and the concordance invariants derived from it i.e., the odd writhe, Henrich-Turaev polynomial, and affine index (or writhe) polynomial.

On the other hand, the extended $\bar{\mu}$-invariants can be nontrivial when the generalized Alexander polynomial vanishes. In addition to the generalized Alexander polynomial, other known slice obstructions are: Turaev's graded genus \cite{bcg1}, the Rasmussen invariant (Dye-Kaestner Kauffman \cite{DKK}, Karimi \cite{karimi}, Rushworth \cite{rush}), and the signature of almost classical knots \cite{bcg2}. These tools have been collectively used to determine the slice status of most of the knots in Green's table. In addition, there are 1299 virtual knots having at most six classical crossings that are known to be slice \cite{bbc,bcg1}. However, there are 38 virtual knots in Green's table where all these invariants are trivial and a slice movie has yet to be found. Using the extended $\bar{\mu}$-invariants and Artin representation, we show that 12 of these are not slice and provide slice movies for an additional 22. In particular, we complete the slice status classification of the 2564 nontrivial virtual knots having at most five crossings. The four remaining virtual knots of unknown slice status all have six crossings and are shown in Figure \ref{fig_4_unknown} at the end of the paper.

The extended $\bar{\mu}$-invariants beyond the first non-vanishing order can also be used to separate the concordance classes of some virtual knots. An example will be given of two non-concordant virtual knots having the same slice genus, graded genus, generalized Alexander polynomial, and first non-vanishing extended $\bar{\mu}$-invariants, but different higher order extended $\bar{\mu}$-invariants.

\subsection{$\Zh$ and Tube} The main difficulty in constructing the extended $\bar{\mu}$-invariants and the extended Artin representation lies in proving invariance under concordance. For this we use a two-step geometric realization process. The first step is the Bar-Natan $\Zh$ map. The $\Zh$ map assigns to each $m$-component virtual link an $(m+1)$-component welded link $\Zh(L)=L \cup \omega$. In \cite{bbc}, this map was shown to be functorial under concordance, so that concordant virtual links are mapped to concordant welded links having one additional component. The second step is Satoh's Tube map \cite{satoh}. It sends a welded link $L$ to a ribbon torus link $\text{Tube}(L)$ in $S^4$. Satoh proved that if two welded links $L,L^*$ are equivalent, then $\text{Tube}(L),\text{Tube}(L^*)$ are ambient isotopic ribbon torus links. In \cite{bbc}, it was shown that the Tube map is also functorial under concordance. Hence, $\text{Tube} \circ \Zh$ maps concordant virtual links to concordant ribbon torus links in $S^4$.

The $\Zh$ map gives a geometric realization of the extended group of a virtual link while the $\text{Tube}$ map gives a geometric realization of the group of a virtual link. Together they imply that $\widetilde{G}(L)\cong G(\Zh(L)) \cong \pi_1(S^4 \smallsetminus \text{Tube} \circ \Zh(L))$. These isomorphisms provide a natural way to port the standard geometric techniques of classical link concordance back to the purely algebraic definition of the extension $\widetilde{G}(L)$. The extended $\bar{\mu}$-invariants are constructed by first developing the Chen-Milnor theory for the groups $G(L)$ of multi-component virtual links and then pre-composing with the $\Zh$ map. In particular, the invariants for virtual knots, which we will call the $\overline{\zh}$-invariants, are the Milnor invariants of the $2$-component welded link $\Zh(L)$. A similar approach is applied to obtain the extended Artin representation, where we pre-compose the usual Artin representation of multi-component virtual string links with the $\Zh$ map.

\subsection{Organization} The organization of this paper is as follows. In Section \ref{sec_virt_weld_ribbon}, we review virtual link concordance, the $\Zh$ map, the $\text{Tube}$ map, and the extensions $\widetilde{G}(L)$. Section \ref{sec_gen_chen_milnor} proves a Chen-Milnor type theorem that will be applied in later sections to the groups and extended groups of both virtual links and virtual string links.  Section \ref{sec_milnor_extended milnor} develops the $\bar{\mu}$-invariants and extended $\bar{\mu}$-invariants for virtual knots and links. The Artin and extended Artin representations of virtual string links are discussed in Section \ref{sec_artin}. Properties and relations of the $\overline{\zh}$-invariants of virtual knots are proved in Section \ref{sec_prop_rel}. The two main applications, example calculations, and slice movies are given in Section \ref{sec_calc_app}. The paper concludes in Section \ref{sec_questions} with questions for further research.

\section{Virtual, Welded, $\&$ Ribbon Torus links} \label{sec_virt_weld_ribbon}
\subsection{Virtual concordance} \label{sec_vknot_defn} Here we discuss how virtual concordance of knots in thickened surfaces can be reformulated as concordance of virtual knots. Recall that a \emph{virtual knot diagram} is an oriented immersed curve in the plane having only transverse double points, where each crossing is marked as a usual over/under crossing or as a virtual crossing (Kauffman \cite{KaV}). A virtual crossing is denoted with a small circle around the double point. Two virtual knot diagrams $K,K^*$ are \emph{equivalent}, denoted $K \leftrightharpoons K^*$, if they may be obtained from one another by Reidemeister moves and the \emph{detour move} (Figure \ref{fig_detour}). The detour move allows for the arbitrary placement of any arc containing only virtual crossings. Here we will denote the three Reidemeister moves as $\Omega 1, \Omega 2, \Omega 3$. An equivalence class of diagrams is called a \emph{virtual knot}. Virtual link diagrams are defined analogously, where a finite number of oriented immersed circles is allowed. The components of a virtual link diagram are assumed to be ordered.

\begin{figure}[htb]
\centerline{
\begin{tabular}{c}
\def\svgwidth{2.5in}
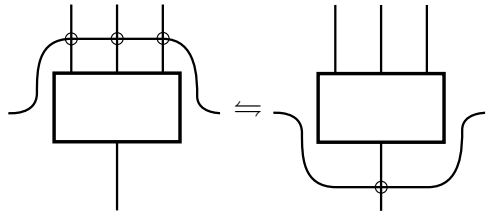
\end{tabular}
} 
\caption{The detour move.}\label{fig_detour}
\end{figure}

A link diagram $\mathscr{L}$ on a closed oriented surface $\Sigma$ corresponds to a virtual link diagram $L$. The diagram $L$ is obtained by drawing the crossing data of $\mathscr{L}$ in the plane and connecting the ends of the arcs exactly as they occur on $\Sigma$ (see Figure \ref{fig_convert}, left). If this cannot be done without adding additional intersections of arcs, the new crossings are marked as virtual crossings. Conversely, a link diagram on a surface can be obtained from any virtual link diagram $L$.  Thicken the arcs of the diagram of $L$, as in the right of Figure \ref{fig_convert}. At a virtual crossing, two bands pass by one another without intersecting. Adding discs to the boundary of this surface gives a link diagram $\mathscr{L}$ on a closed oriented surface $\Sigma$. The surface $\Sigma$ is called the \emph{Carter surface} (or \emph{abstract surface}) of the virtual link diagram $L$ (Carter-Kamada-Saito \cite{CKS}).

\begin{figure}[htb]
\begin{tabular}{c}
\def\svgwidth{5in}
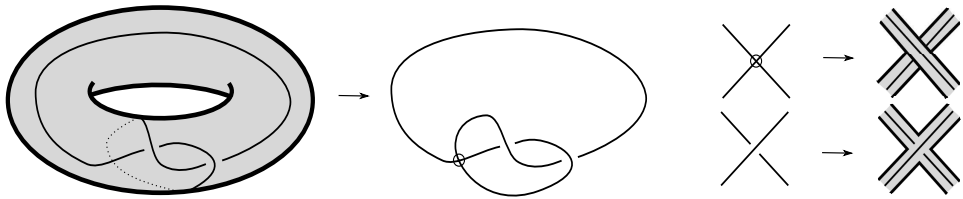
\end{tabular} 
\caption{Converting a knot on a surface to a virtual knot and vice versa.}\label{fig_convert}
\end{figure}

A smooth concordance between two links in $S^3$ can be visualized as a movie between diagrams of the links. The movie consists of a finite connected sequence of Reidemeister moves, births, deaths, and saddles (see Figure \ref{fig_concordance_moves}). This motivates Kauffman's definition of virtual link concordance \cite{lou_cob}. 

\begin{definition}[Virtual link concordance, slice virtual link] Two virtual knots $K,K^*$ are said to be \emph{concordant} if and only if they may be obtained from one another by a finite connected sequence of Reidemeister moves, detour moves, births, deaths, and saddle moves (see Figure \ref{fig_concordance_moves}) such that:
\begin{eqnarray} \label{eqn_euler}
\#(\text{births})-\#(\text{saddles})+\#(\text{deaths})&=& 0.
\end{eqnarray}
Two $m$-component virtual links are \emph{concordant} if there is a sequence of moves as above that restricts to a concordance of virtual knots on each component. An $m$-component virtual link is said to be \emph{slice} if it is concordant to the $m$-component unlink.
\end{definition}

\begin{figure}[htb]
\small
\centerline{
\begin{tabular}{c} \\
\def\svgwidth{4.3in}
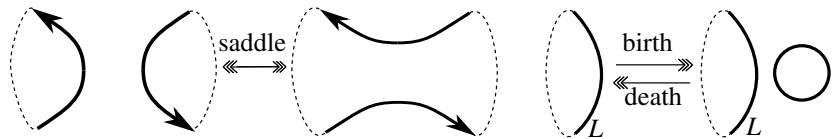
\end{tabular}
} 
\caption{Births, deaths, and saddles.}\label{fig_concordance_moves}
\end{figure}

\begin{example} Figure \ref{fig_conc_example} shows the diagram of a virtual knot $K$ equivalent to 5.1216. Perform a saddle move to the two arcs connected by the blue dashed chord. The result is a $2$-component virtual link diagram that is equivalent to the $2$-component unlink. Using a death move on the red component gives a concordance to the unknot. Hence, $5.1216$ is slice.
\end{example}

\begin{figure}[htb]
\begin{tabular}{c}
\centerline{
\def\svgwidth{5in}
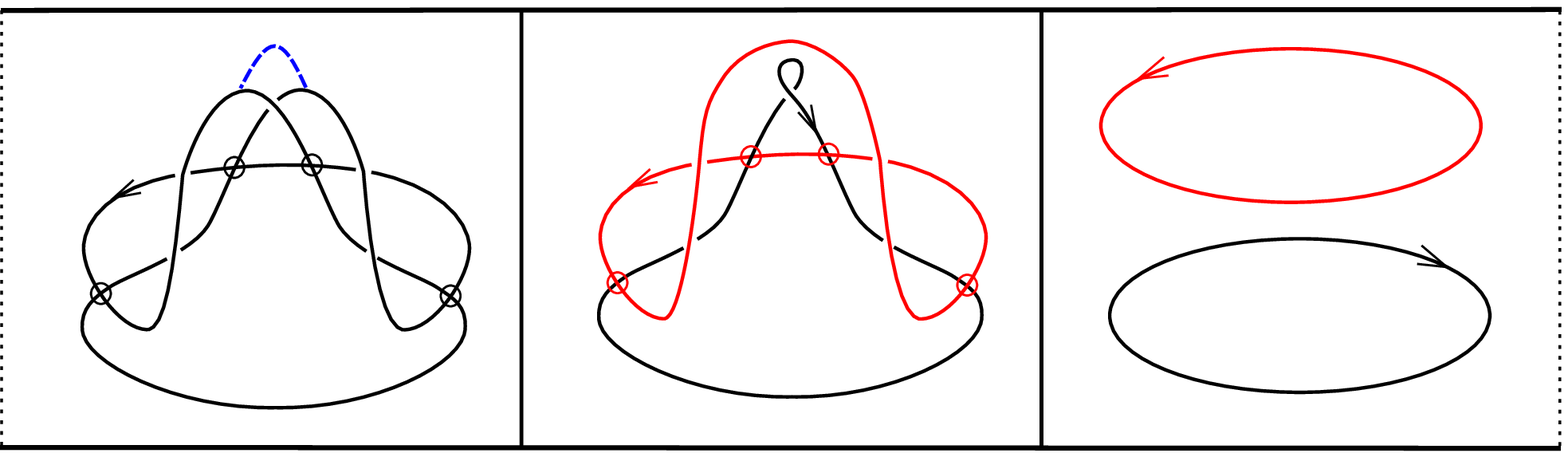
}
\end{tabular} 
\caption{A slice movie for a virtual knot $K \leftrightharpoons 5.1216$.}\label{fig_conc_example}
\end{figure}

A result of Carter, Kamada, and Saito (see \cite{CKS}, Lemma 4.5) implies that $\mathscr{K} \subset \Sigma \times I,\mathscr{K}^* \subset \Sigma^* \times I$ are virtually concordant if and only if they represent concordant virtual knots. A brief sketch of their argument is as follows. Let $W$ be a compact oriented $3$-manifold such that $\partial W=\Sigma^* \sqcup -\Sigma$ and suppose $\Lambda \subset W \times I$ is a concordance between $\mathscr{K},\mathscr{K}^*$. Assume that the projection $W \times I \to W$ maps $\Lambda$ to a generically immersed surface in $W$, also denoted by $\Lambda$. Consider a Morse function $h:W \to [0,1]$, where $h^{-1}(0)=\Sigma$ and $h^{-1}(1)=\Sigma^*$. The singularities of $\Lambda$ correspond to Reidemeister moves and the critical points of $h$ in $\Lambda$ correspond to births, deaths, and saddles. The critical points of $h$ in $W\smallsetminus \Lambda$ can be viewed as additions/deletions of $0$, $1$, and $2$-handles. These correspond to stabilizations/destabilizations of knots in thickened surfaces. Since virtual knots are in one-to-one correspondence with knots in thickened surfaces modulo ambient isotopy, stabilization/destabilization, and orientation preserving diffeomorphisms of surfaces (Kuperberg \cite{kuperberg}), the result follows.

\subsection{The $\Zh$ Map} Two $m$-component virtual link diagrams are said to be \emph{welded equivalent} if they may be obtained from one another by a finite sequence of Reidemeister moves, detour moves, and forbidden overpass moves (see Figure \ref{fig_forbidden}). The forbidden overpass move allows for the arbitrary rearrangement of successive overcrossings and is hence also referred to as the \emph{overcrossings-commute relation}. If $L,L^*$ are welded equivalent, we will write $L\leftrightharpoons_w L^*$. Two virtual link diagrams are said to be \emph{welded concordant} if they admit a concordance of virtual link diagrams when forbidden overpass moves are also allowed.

\begin{figure}[htb]
\centerline{
\begin{tabular}{c} \\ \\ 
\def\svgwidth{1.3in}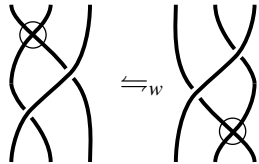
\end{tabular}
}
\caption{The overcrossings-commute relation (or forbidden overpass move).} \label{fig_forbidden}
\end{figure}

Bar-Natan's $\Zh$ map associates to each $m$-component virtual link $L$ an $(m+1)$-component semi-welded link $L\cup\omega$. Before defining the $\Zh$ map, we recall the definitions of semi-welded equivalence and semi-welded concordance from \cite{bbc}.

\begin{definition}[Semi-welded equivalence $\&$ concordance] \label{defn_semi}  Two $(m+1)$-component virtual link diagrams $L \cup \omega$, $L^* \cup \omega^*$ are said to be \emph{semi-welded equivalent} if they may be obtained from one another by a finite sequence of Reidemeister moves and detour moves involving all components of $L\cup \omega$, together with forbidden overpass moves where the overcrossing arc in Figure \ref{fig_forbidden} belongs to the $\omega$ component. Semi-welded equivalence is denoted with $\leftrightharpoons_{sw}$. A \emph{semi-welded concordance} is welded concordance between $L \cup \omega$, $L^* \cup \omega^*$ where the forbidden overpass move is only allowed on the $\omega$ component.
\end{definition}

The diagram $\Zh(L)=L \cup \omega$ is constructed as follows. At each classical crossing, draw a new oriented arc through the double point that crosses over both of the other arcs (see Figure \ref{fig_zh_defn}). The new arcs are assembled into a single component $\omega$ by connecting their ends together arbitrarily. The only requirement is that orientation of the arcs is preserved. Any new intersections of arcs that are introduced during this process are marked as virtual crossings. The new $(m+1)$-component link is denoted by $\Zh(L)$ and it is well-defined up to semi-welded equivalence. An example of the $\Zh$ map is given in Figure \ref{fig_zh_ex}, where two possible outcomes of the construction are depicted.

\begin{figure}[htb]
\centerline{
\begin{tabular}{c} \\ \\ 
\def\svgwidth{4in}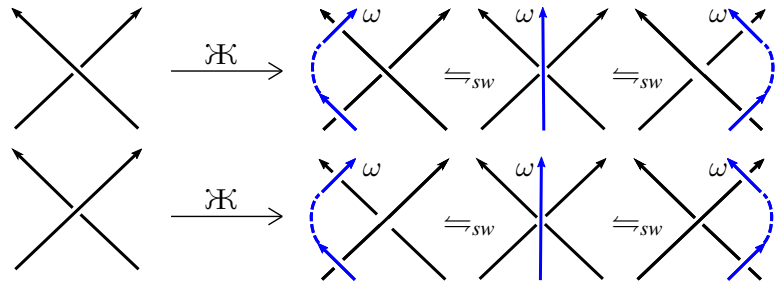
\end{tabular}
}
\caption{Behaviour of the $\Zh$ map at a classical crossing.} \label{fig_zh_defn}
\end{figure}

\begin{figure}[htb]
\small
\centerline{
\begin{tabular}{c} \\ \\ 
\def\svgwidth{4in}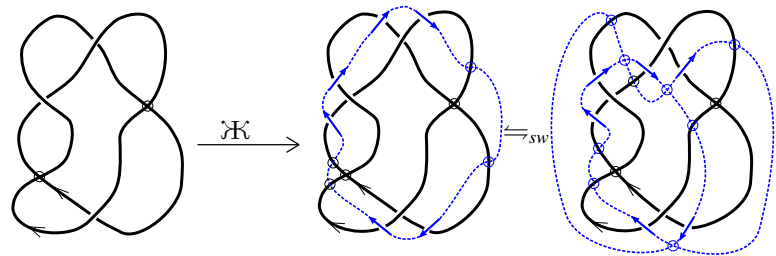
\end{tabular}
}
\normalsize
\caption{The $\Zh$ map applied to the virtual knot $K=3.5$. The two depicted ways of drawing the $\omega$ component of $\Zh(K)$ are semi-welded equivalent.} \label{fig_zh_ex}
\end{figure}

For the reader's convenience, we provide a sketch of the argument given in \cite{bbc} that the $\Zh$ map is well-defined. Suppose we are given an initial connection of the arcs of the $\omega$ component. This may be viewed as a choice of ordering of the arcs of the $\omega$ component. It must be shown that any other ordering may be obtained from this by semi-welded moves. Consider the effect of traveling along the $\omega$ component from a base point. Using detour moves, it may be arranged that all of the virtual crossings appear before the classical crossings. Next observe that the order of any two consecutive classical crossings on the $\omega$ component can be transposed using a detour move and a forbidden overpass move on the $\omega$ component. See Figure \ref{fig_zh_wd}. Now, an alternative ordering of the arcs of the $\omega$ component can be viewed as a permutation of the initial ordering. Since permutations are generated by transpositions, it follows that we may obtain the alternative ordering using a sequence of forbidden overpass and detour moves. Thus, the arcs of the $\omega$ component may be stitched together in any order and we see that $\Zh(L)$ is well-defined.

\begin{figure}[htb]
\small
\centerline{
\begin{tabular}{c} \\ \\ 
\def\svgwidth{5in}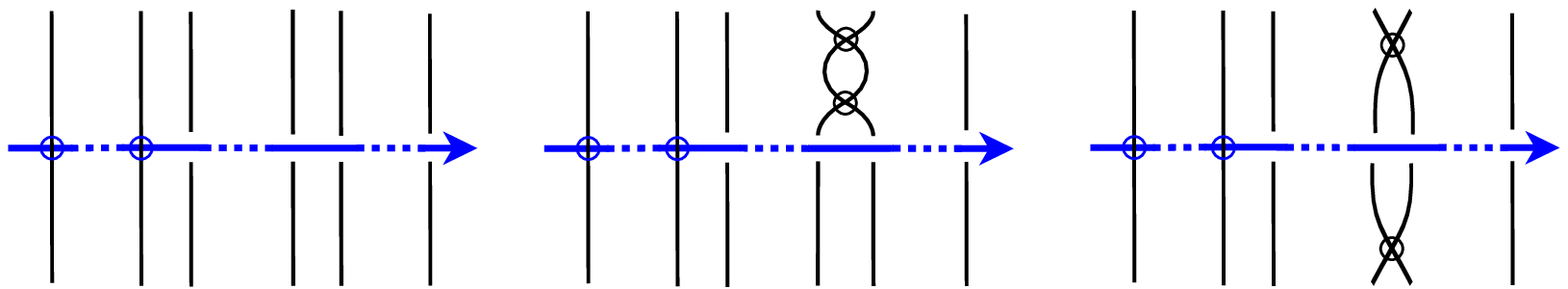
\end{tabular}
}
\normalsize
\caption{The $\Zh$ map is well-defined.} \label{fig_zh_wd}
\end{figure}

If $L,L^*$ are equivalent virtual links, then $\Zh(L), \Zh(L^*)$ are semi-welded equivalent. This result was first proved Bar-Natan \cite{dror}. A detailed proof is given in \cite{bbc}, Proposition 4.1. Furthermore, a concordance movie between two virtual links $L,L^*$ gives a semi-welded concordance movie of $\Zh(L)$, $\Zh(L^*)$. This implies the following result. 

\begin{theorem}[\cite{bbc}, Proposition 4.3] \label{thm_semi_weld_conc} If $L,L^*$ are concordant virtual links, then $\Zh(L),\Zh(L^*)$ are  semi-welded concordant.
\end{theorem}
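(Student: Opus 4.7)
The natural approach is to upgrade the given concordance movie between $L$ and $L^*$ to a semi-welded concordance movie between $\Zh(L)$ and $\Zh(L^*)$, move-by-move. By the already-established semi-welded invariance of $\Zh$ under Reidemeister and detour moves (\cite{bbc}, Proposition 4.1), every such move in the given movie lifts to a sequence of semi-welded equivalence moves on the $\Zh$ side. Hence only the births, deaths, and saddles in the movie require new attention.

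The key observation for the remaining moves is that each of them takes place inside a small disk $D$ in the plane that may be chosen to contain no classical crossings of the current diagram. Since $\Zh$ only introduces arcs of $\omega$ at classical crossings, a finite sequence of detour moves suffices to route the $\omega$-component entirely out of $D$ before the move is performed. Once $\omega$ has been cleared away, the birth, death, or saddle is carried out on the $L$-components without ever touching $\omega$, and so it lifts to precisely the same birth, death, or saddle on $\Zh(L)$. In particular, the Euler characteristic count (\ref{eqn_euler}) is preserved, because $\omega$ itself undergoes no births, deaths, or saddles throughout the entire movie.

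The remaining point to verify is that the $\omega$-component obtained by transporting the initial choice through this movie is semi-welded equivalent to any chosen $\omega^*$ for $\Zh(L^*)$. This follows directly from the well-definedness of $\Zh$ up to semi-welded equivalence: any two stitchings of the $\omega$-arcs differ by a permutation, and each transposition can be realized by a detour move together with a forbidden overpass move on $\omega$, as illustrated in Figure \ref{fig_zh_wd}. The main obstacle is the saddle, since it topologically rearranges the $L$-components and could in principle force a re-ordering of the classical crossings threaded by $\omega$. However, because $\omega$ has been routed clear of the saddle disk, the classical crossings through which $\omega$ passes are left pointwise unchanged by the saddle, so the original stitching of $\omega$ remains valid afterward, and any discrepancy with a chosen $\omega^*$ is absorbed by a final semi-welded equivalence.
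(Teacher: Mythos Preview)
Your proposal is correct and follows the same approach that the paper sketches: lift the concordance movie move-by-move, using the established semi-welded invariance of $\Zh$ for Reidemeister and detour moves, and handling births, deaths, and saddles by clearing $\omega$ from the local disk. The paper itself does not give a detailed proof here (it cites \cite{bbc}, Proposition 4.3, and only remarks that a concordance movie between $L,L^*$ yields a semi-welded concordance movie between $\Zh(L),\Zh(L^*)$), so your write-up is simply a fleshed-out version of that one-line argument.
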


\subsection{The Tube Map} Satoh's Tube map \cite{satoh} assigns to each welded link a ribbon torus link in $S^4$. The Tube map is defined using \emph{broken surface diagrams}. After an ambient isotopy, a smoothly embedded surface in $\mathbb{R}^4$ can be projected to a generically immersed surface in $\mathbb{R}^3$. For concreteness, say the projection is $(x_1,x_2,x_3,x_4) \to (x_1,x_2,x_3)$. A broken surface diagram is constructed by breaking double lines of the immersion along the sheet having smaller $x_4$ coordinate (Figure \ref{fig_broken}). 

\begin{figure}[htb]
\centerline{
\begin{tabular}{cc} \begin{tabular}{c}
\def\svgwidth{1.5in}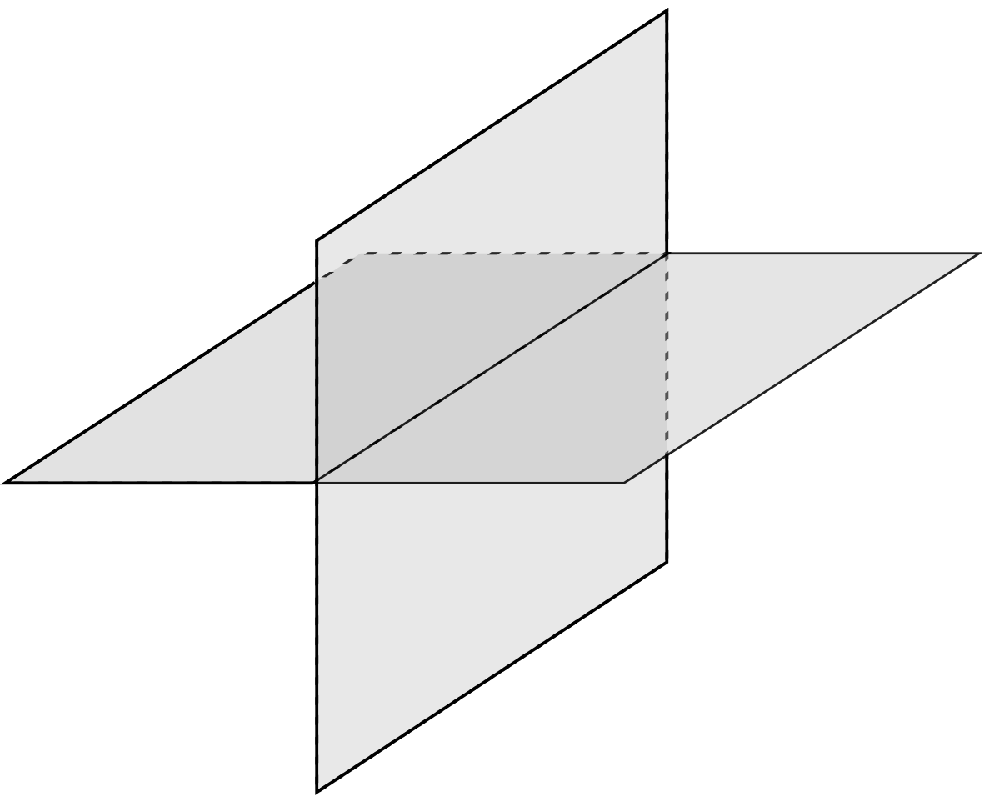 \end{tabular} & \begin{tabular}{c}
\def\svgwidth{1.85in}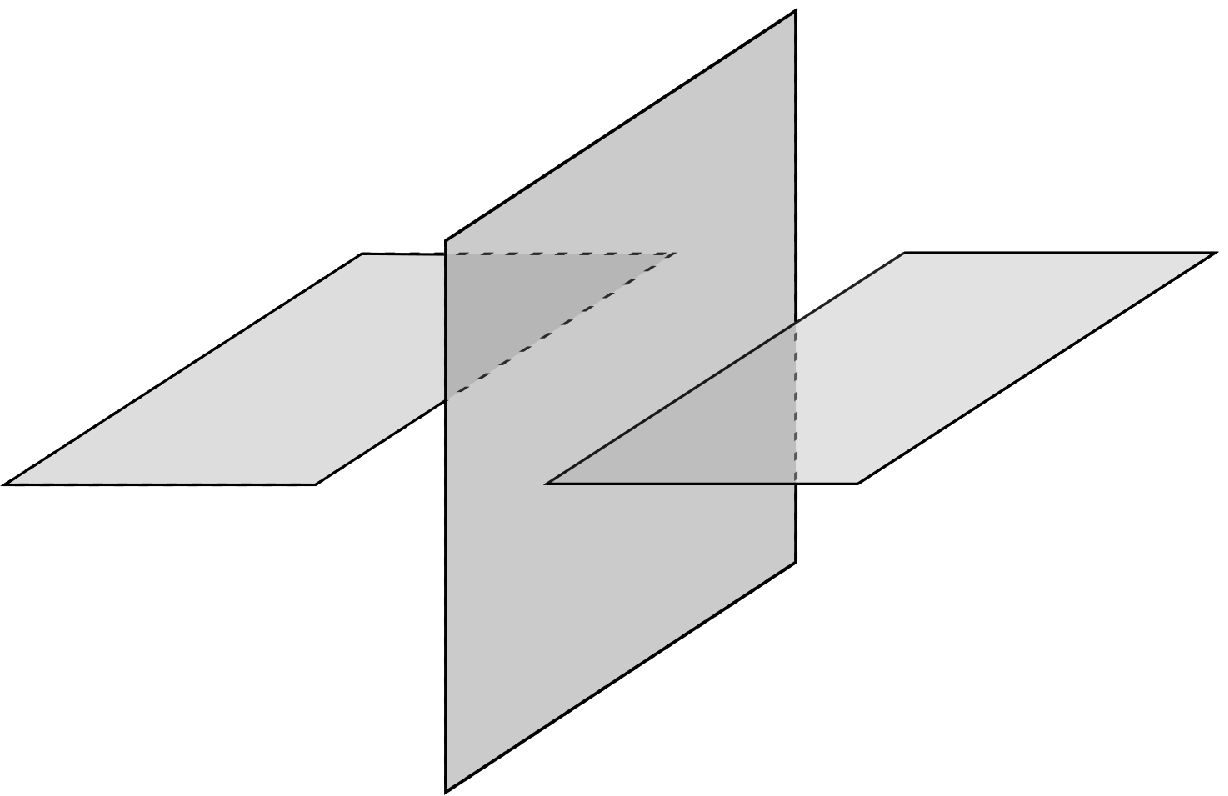 \end{tabular}  \end{tabular}
}
\caption{Breaking a double line (left) to form a broken surface diagram (right).} \label{fig_broken}
\end{figure}

The Tube map associates a broken surface diagram $\text{Tube}(L)$ to each virtual link diagram $L$ as follows. Each arc of the diagram is mapped to an annulus. At virtual crossings, two cylinders pass over and under one another as in Figure \ref{fig_tube}, right. At a classical crossing, the broken surface diagram appears as in Figure \ref{fig_tube}, left. Then $\text{Tube}(L)$ is a broken surface diagram of a collection of knotted tori in $S^4$, with one torus component for each component of $L$. Moreover, $\text{Tube}(L)$ is a \emph{ribbon torus link}: it bounds an immersed $3$-dimensional handlebody in $S^4$ having only ribbon singularities.

\begin{figure}[htb]
\centerline{
\begin{tabular}{cc} 
$$\xymatrix{\begin{tabular}{c}\psfig{file=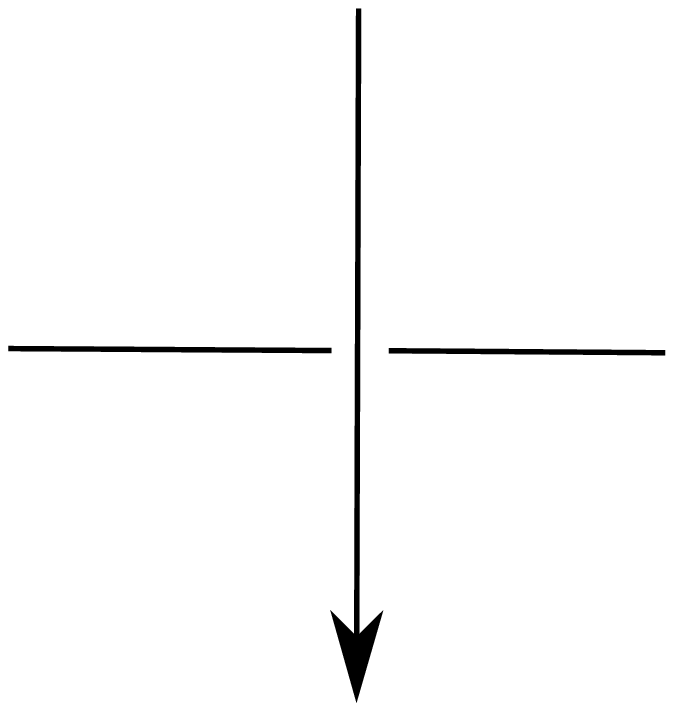,width=1.1in} \end{tabular} \ar[r]^-{\text{Tube}} & \begin{tabular}{c}\psfig{file=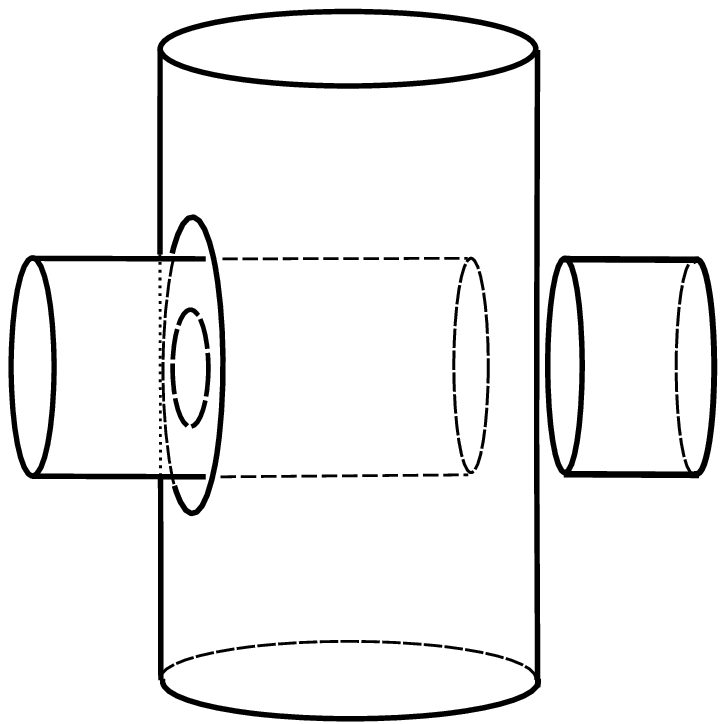,width=1.1in} \end{tabular}}$$ &
$$\xymatrix{
\begin{tabular}{c}\psfig{file=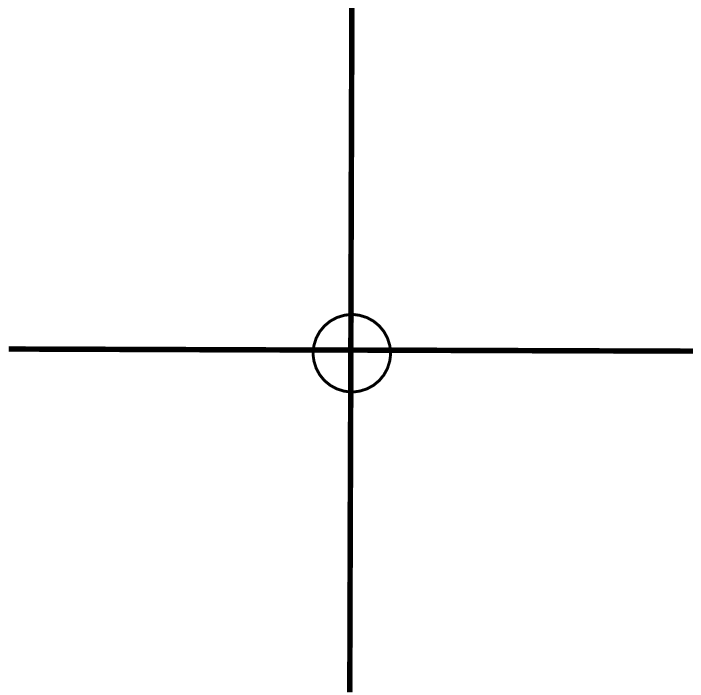,width=1.1in} \end{tabular} \ar[r]^-{\text{Tube}} & \begin{tabular}{c}\psfig{file=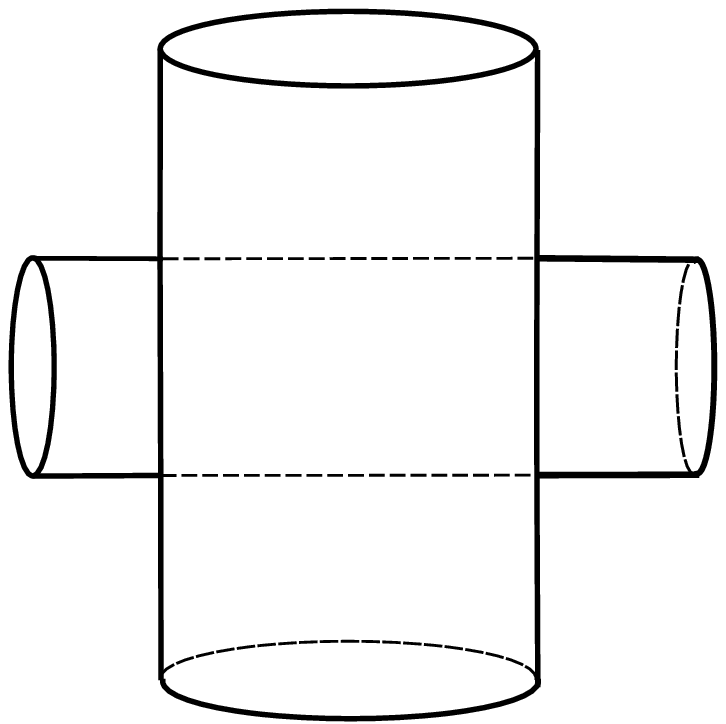,width=1.1in} \end{tabular}}$$
\end{tabular}
}
\caption{The Tube map.} \label{fig_tube}
\end{figure}

Ribbon torus links are considered equivalent up to ambient isotopy in $S^4$. If $L^*$ is obtained from $L$ by a single Reidemeister move, detour move, or forbidden overpass move, then the Roseman \cite{roseman} moves can be used to show that $\text{Tube}(L)$ and $\text{Tube}(L^*)$ are equivalent ribbon torus links.

\begin{theorem}[Satoh \cite{satoh}, Proposition 3.3] If $L,L^*$ are equivalent welded links, then $\text{Tube}(L)$ and $\text{Tube}(L^*)$ are equivalent ribbon torus links.
\end{theorem}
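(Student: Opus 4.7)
The plan is to verify invariance one generating move at a time, since welded equivalence is generated by the three classical Reidemeister moves $\Omega 1,\Omega 2,\Omega 3$, the detour move, and the forbidden overpass move. For each such local move on welded diagrams, I would produce an explicit local modification of the corresponding broken surface diagram and exhibit it as a finite sequence of Roseman moves, which generate ambient isotopy of smoothly embedded surfaces in $S^4$.

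First I would dispose of the detour move. Under Tube, a virtual crossing produces two disjoint annular sheets that merely pass by one another (no double lines, no breaks), so any strand built entirely out of virtual crossings and their connecting arcs becomes an embedded annulus whose image can be repositioned in $\mathbb{R}^3$ by an ambient isotopy that avoids the rest of the diagram. This ambient isotopy lifts to $S^4$ trivially, so both sides of the detour move are sent to ambient isotopic ribbon torus links. Next I would handle the classical Reidemeister moves. For $\Omega 1$, the tube picture is a local annulus pinched through itself, which is removed by the Roseman move corresponding to the $4$-dimensional Reidemeister I. For $\Omega 2$, the two double lines of opposite sign cancel via the Roseman version of $\Omega 2$. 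For $\Omega 3$, the six half-sheets meeting along three double lines and one triple point can be rearranged using the Roseman triple-point move (the $4$D analogue of $\Omega 3$), together with auxiliary passes of non-intersecting sheets which are handled by isotopy.

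The most delicate step is the forbidden overpass (overcrossings-commute) move. Here the key observation is that the Tube map turns an arc that passes \emph{over} at a classical crossing into a sheet whose broken surface diagram is \emph{unbroken} at the corresponding double line, while the lower arc becomes a broken sheet. Thus, at two consecutive overcrossings by the same strand, the overpassing tube is a single uninterrupted annulus that intersects two other annuli each in a single circle of double points, and \emph{neither} of these double circles is broken on the overpassing tube. Consequently, the pair of double circles can be slid past each other along the overpassing tube purely by ambient isotopy in $\mathbb{R}^4$, with no Roseman branch-point or triple-point move required. This geometric mechanism is precisely the reason the forbidden overpass becomes a relation under Tube but the other ``forbidden'' move (with an undercrossing strand in between) does not; I would make this explicit with a local broken surface picture showing the before-and-after and the connecting isotopy.

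The main obstacle, as anticipated, is verifying the $\Omega 3$ case and the forbidden overpass case simultaneously keep track of every break convention, because the sign of a crossing and the choice of over/under strand together dictate which sheet of the tube is broken at the double line. A careful case analysis is needed so that each Roseman move used is legitimately applicable; once this bookkeeping is set up, each individual verification is a short local argument. Putting all the local verifications together, any finite sequence of welded moves connecting $L$ and $L^*$ lifts to a finite sequence of Roseman moves and ambient isotopies connecting $\text{Tube}(L)$ and $\text{Tube}(L^*)$, which is exactly the conclusion.
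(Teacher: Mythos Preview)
Your proposal is correct and follows the same approach that the paper sketches: the paper does not give a detailed proof of this cited result but simply notes that each Reidemeister move, detour move, and forbidden overpass move can be verified using Roseman moves, which is exactly the move-by-move verification you outline. Your treatment is more detailed than the paper's one-sentence remark, but the underlying strategy is identical.
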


Two $m$-component ribbon torus links $T,T^*$ are said to be \emph{concordant} if there is a smooth and proper embedding $\Lambda$ of $\bigsqcup_{i=1}^m (S^1 \times S^1) \times I$ into $S^4 \times I$ such that $\Lambda \cap (S^4 \times 1)=T^*$ and $\Lambda \cap (S^4 \times 0)=-T$. In \cite{bbc}, it was shown that a concordance of welded links generates a handle decomposition of a concordance between their corresponding ribbon torus links. This implies that the Tube map is also functorial under concordance. 

\begin{theorem}[\cite{bbc}, Proposition 4.7] \label{thm_tube_conc} If $L,L^*$ are welded-concordant virtual link diagrams, then $\text{Tube}(L)$ and $\text{Tube}(L^*)$ are concordant ribbon torus links.
\end{theorem}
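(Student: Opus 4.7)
The idea is to convert the welded concordance movie between $L$ and $L^*$ into a handle decomposition of a $3$-dimensional concordance surface $\Lambda \subset S^4 \times I$ between $\text{Tube}(L)$ and $\text{Tube}(L^*)$. The welded concordance by hypothesis is a finite sequence of Reidemeister moves, detour moves, forbidden overpass moves, births, deaths, and saddles satisfying the Euler characteristic equation $\#(\text{births}) - \#(\text{saddles}) + \#(\text{deaths}) = 0$. I would partition the time interval $I = [0,1]$ so that each elementary move occurs in its own subinterval and construct the piece of $\Lambda$ over each subinterval independently.

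The easiest pieces are the Reidemeister, detour, and forbidden overpass moves. By Satoh's theorem (the preceding theorem in the excerpt), such a move $L_i \leftrightharpoons_w L_{i+1}$ induces an ambient isotopy of $S^4$ carrying $\text{Tube}(L_i)$ onto $\text{Tube}(L_{i+1})$. The track of this isotopy in $S^4 \times [t_i, t_{i+1}]$ is a product $\text{Tube}(L_i) \times [t_i, t_{i+1}]$, contributing disjoint copies of $T^2 \times [t_i, t_{i+1}]$ to $\Lambda$. For the remaining three moves, I would work locally in the broken surface diagram. A birth introduces a small unknotted circle $U$ into the virtual diagram; under $\text{Tube}$ this becomes a small standardly embedded unknotted torus $\text{Tube}(U) \subset S^4$. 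I would cap this on its past side by attaching a standard solid handle in the cobordism, producing a piece of $\Lambda$ topologically of the form $T^2 \times [t_i, 1]$ whose only boundary-at-time-$t_i$ is absorbed as an interior feature after future saddles act on it. A death is dual. For a saddle, the local picture at the virtual level attaches a flat band between two strands; the image under $\text{Tube}$ is a thin $4$-dimensional $1$-handle joining two annular tubes in the broken surface diagram, and in $S^4 \times [t_i, t_{i+1}]$ this extends to a $1$-handle attachment on the already-constructed portion of $\Lambda$. The crucial geometric input from \cite{bbc} is that these handle attachments can all be performed so that the resulting $3$-manifold $\Lambda$, viewed through the projection to $S^4$, bounds the correct immersed $4$-dimensional handlebody with only ribbon singularities.

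Assembling all of the local pieces produces a properly embedded $3$-manifold $\Lambda \subset S^4 \times I$ with $\Lambda \cap (S^4 \times \{0\}) = -\text{Tube}(L)$ and $\Lambda \cap (S^4 \times \{1\}) = \text{Tube}(L^*)$. It remains to identify the diffeomorphism type of each component of $\Lambda$. The handle-count on each connected component of $\Lambda$ is exactly (\#births contributing to that component) $-$ (\#saddles on that component) $+$ (\#deaths on that component), and the Euler characteristic hypothesis forces the total $\chi$ of each component to vanish. Combined with the obvious fact that the moves preserve the ribbon/torus-bundle structure built up over time and that each end of $\Lambda$ consists of tori, this would force each component of $\Lambda$ to be diffeomorphic to $T^2 \times I$ rather than a closed surface bundle or a higher genus handlebody. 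Hence $\Lambda$ is a ribbon torus concordance between $\text{Tube}(L)$ and $\text{Tube}(L^*)$.

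The main obstacle I expect is the saddle-and-birth bookkeeping in the final step: to rigorously show that the $3$-manifold $\Lambda$ assembled from the elementary handle pieces is a disjoint union of copies of $T^2 \times I$ rather than, say, a connected higher-genus cobordism with nontrivial topology. The Euler characteristic condition is necessary but not by itself sufficient, and one must track carefully how births, saddles, and deaths pair up within each connected component of $\Lambda$, using the fact that ribbon tori are standardly embedded locally and that the handle attachments are all of the ribbon-singular type preserved by $\text{Tube}$. This is precisely the technical content developed in \cite{bbc}, and my plan matches that strategy.
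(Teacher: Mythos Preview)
Your overall strategy---translate the welded concordance movie into a handle decomposition of a $3$-manifold $\Lambda \subset S^4 \times I$---matches the paper's one-line summary of the argument from \cite{bbc}. The treatment of Reidemeister, detour, and forbidden overpass moves via Satoh's theorem is correct.

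There is, however, a genuine gap in your final step. Your Euler characteristic argument is not merely insufficient; it is vacuous. For any compact orientable $3$-manifold $M$, one has $\chi(M) = \tfrac{1}{2}\chi(\partial M)$, and since $\partial \Lambda$ consists of tori, $\chi(\Lambda)=0$ holds automatically, independent of how births, saddles, and deaths are distributed. So the count $\#(\text{births}) - \#(\text{saddles}) + \#(\text{deaths})$ carries no information about $\Lambda$ at the $3$-manifold level. Relatedly, your description of the birth piece as ``$T^2 \times [t_i,1]$ capped on its past side'' is confused: what $\text{Tube}$ produces from a birth disk is a round $0$-handle (a solid torus $S^1 \times D^2$), not a product cylinder with one end filled.

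The cleaner route, and the one the paper's summary points to, is to work one dimension down. By the \emph{definition} of virtual link concordance, the $2$-dimensional cobordism $F$ between $L$ and $L^*$ is already a disjoint union of annuli---that is exactly what the per-component Euler condition buys you, and it \emph{is} a meaningful constraint on surfaces. Applying $\text{Tube}$ to the movie builds $\Lambda$ out of round handles ($S^1$ times the $2$-dimensional handles of $F$), so $\Lambda$ is an oriented $S^1$-bundle over $F$. Since each component of $F$ is an annulus (which deformation retracts to a circle), every such bundle is trivial, and each component of $\Lambda$ is $S^1 \times (S^1 \times I) \cong T^2 \times I$. This is the precise form of the ``bookkeeping'' you allude to, and it bypasses the $3$-dimensional Euler characteristic entirely.
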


\subsection{Groups $\&$ extended groups} The group of an $m$-component virtual link $L$ was first defined by Kauffman \cite{KaV}. Recall that the \emph{arcs} of a virtual link diagram are the paths along the diagram going from one undercrossing of $L$ to the next, where virtual crossings are ignored.

\begin{definition}[Group of $L$] The group $G(L)$ of a virtual link diagram $L$ is the group whose generators are in one-to-one correspondence with the arcs of $L$ and whose relations are in one-to-one correspondence with the crossings of $L$, as shown in Figure \ref{fig_group_rels}.
\end{definition}

\begin{figure}[htb]
\centerline{
\begin{tabular}{ccccc} \begin{tabular}{c}
\def\svgwidth{.75in}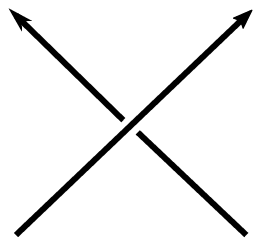 \\ \\ $d=a^{-1}ba$\end{tabular} & & \begin{tabular}{c}
\def\svgwidth{.75in}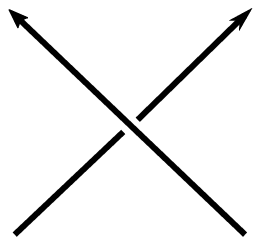 \\  \\ $c=bab^{-1}$ \end{tabular} & \begin{tabular}{c}
\def\svgwidth{.75in}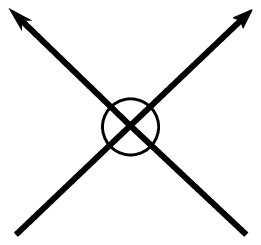 \\  \\ none \end{tabular} \end{tabular}
}
\caption{Relations for the group $G(L)$ of a virtual link diagram $L$.} \label{fig_group_rels}
\end{figure}

The group of a link is invariant under Reidemeister moves, detour moves, and the forbidden overpass move. Thus it is an invariant of welded equivalence. The geometric realization of $G(L)$ is given by the following theorem of Satoh (see also Silver-Williams \cite{silwill_vkg}, Theorem 2.2).

\begin{theorem}[Satoh \cite{satoh}, Proposition 3.4] \label{thm_satoh} For a virtual link diagram $L$, $G(L) \cong \pi_1(S^4 \smallsetminus \text{Tube}(L))$.
\end{theorem}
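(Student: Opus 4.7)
The plan is to compute $\pi_1(S^4 \smallsetminus \text{Tube}(L))$ by reading off a Wirtinger-type presentation directly from the broken surface diagram produced by the Tube map, and then check that this presentation agrees, generator-for-generator and relation-for-relation, with the combinatorial presentation of $G(L)$.

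First I would recall the standard Wirtinger presentation for a knotted surface $F \subset S^4$ from a broken surface diagram $D$ of $F$ in $\mathbb{R}^3$: there is one generator $x_S$ per sheet $S$ of $D$, represented by a small meridional loop based at $\infty$, and one relation per component of the double-point set, which takes the conjugation form $x_{S'} = x_T^{\varepsilon} x_S x_T^{-\varepsilon}$, where $S,S'$ are the two broken pieces of the lower sheet on either side of the double curve and $T$ is the over-sheet; the sign $\varepsilon$ is determined by the normal co-orientation of $T$. This is a direct adaptation of the classical Wirtinger theorem to broken surface diagrams and follows from a standard van Kampen argument applied to a regular neighbourhood of $D$ in $\mathbb{R}^3$, thickened into $S^4$.

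Next I would identify the sheets and double curves of $\text{Tube}(L)$. By the definition of the Tube map, each arc $\alpha$ of the virtual diagram $L$ (a maximal piece between two successive undercrossings, with virtual crossings ignored) is sent to one annular sheet $T_\alpha$: at virtual crossings, two tubes pass over and under each other in the $x_4$-direction and contribute no double curve (Figure \ref{fig_tube}, right), so virtuals impose no relation; at classical crossings, $\text{Tube}(L)$ has a ribbon-type double curve where the overstrand tube $T_b$ passes through the broken understrand tubes $T_a$ and $T_d$ (Figure \ref{fig_tube}, left). Hence there is a canonical bijection between the sheets of the broken surface diagram of $\text{Tube}(L)$ and the arcs of $L$, and between the double curves of $\text{Tube}(L)$ and the classical crossings of $L$.

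Then I would perform the local computation at a classical crossing. For a positive crossing with incoming arcs $a,b$ and outgoing arcs $d,b$ as in Figure \ref{fig_group_rels}, the associated piece of $\text{Tube}(L)$ is a pair of tubes where $T_b$ crosses over $T_a \cup T_d$ and breaks the latter along a circle. Choosing meridians $x_a, x_b, x_d$ around the three sheets with the co-orientations induced by the arc orientations, the Wirtinger relation at the double curve is exactly $x_d = x_a^{-1} x_b x_a$; the negative-crossing case is analogous and gives $x_c = x_b x_a x_b^{-1}$. These match the crossing relations defining $G(L)$ precisely (Figure \ref{fig_group_rels}). Finally, combining the bijection of sheets with arcs and double curves with classical crossings yields an isomorphism of presentations, proving $G(L) \cong \pi_1(S^4 \smallsetminus \text{Tube}(L))$.

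The main obstacle is the local verification at a classical crossing: one must be attentive to orientations of meridians, the choice of which sheet is broken, and the sign of the normal co-orientation, since these determine whether the resulting conjugator is $x_a$ or $x_a^{-1}$. Once the local model at a positive classical crossing is drawn carefully in $\mathbb{R}^3$ together with its thickening into $S^4$, the calculation is routine, but the bookkeeping is the substantive content of the proof; everything else reduces to the standard van Kampen argument underlying the Wirtinger presentation for broken surface diagrams.
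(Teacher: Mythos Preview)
Your approach is correct and is essentially Satoh's original argument, which the paper cites rather than reproves; the paper's own recollection of the isomorphism (in Section~\ref{sec_longitudes}) follows the same line, matching arcs of $L$ to meridians of \emph{outer annuli} of $\text{Tube}(L)$ and classical crossings to \emph{inner annuli}, with the Wirtinger relation $m_{\delta}=m_{\beta}^{-\varepsilon} m_{\alpha} m_{\beta}^{\varepsilon}$ at each inner annulus.

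One small refinement worth noting: your statement that ``each arc $\alpha$ of $L$ is sent to one annular sheet $T_\alpha$'' slightly undercounts the sheets. The broken surface diagram of $\text{Tube}(L)$ has, at each classical crossing, an additional \emph{inner} annulus (the piece of tube threading through the other tube), so the sheet set is not in bijection with the arcs alone. However, the meridian of each inner annulus is equal in $\pi_1$ to the meridian of an adjacent outer annulus, so after eliminating these redundant generators you recover exactly one generator per arc and one relation per classical crossing, as you claim. This is the bookkeeping the paper encodes in its outer/inner annulus language, and it is the only place your sketch is less precise than the paper's.
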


Extensions of the group of a virtual link have been studied by Manturov \cite{manturov_long}, Silver and Williams \cite{silwill}, and Bardakov and Bellingeri \cite{bardakov_bellingeri}. These extensions were unified by Boden et al. \cite{alex_boden} using a further extension called the \emph{virtual knot group}. In \cite{acpaper}, Boden et al. introduced a subgroup of the virtual knot group called the \emph{reduced virtual knot group} and proved that this subgroup is isomorphic to the extended group from Silver-Williams \cite{silwill} and the quandle group from Manturov \cite{manturov_long} and Bardakov-Bellingeri \cite{bardakov_bellingeri} (Boden et al. \cite{acpaper}, Theorem 3.3). As these three extensions are all equivalent, we will refer to them collectively here as the \emph{extended group} of $L$. It will be denoted by $\widetilde{G}(L)$. The presentation given by Boden et al. in \cite{acpaper} is defined as follows. The \emph{short arcs} of a virtual link diagram are paths along the diagram from one classical crossing to the next, where virtual crossings are ignored. 

\begin{figure}[htb]
\centerline{
\begin{tabular}{ccccc} \begin{tabular}{c}
\def\svgwidth{.75in}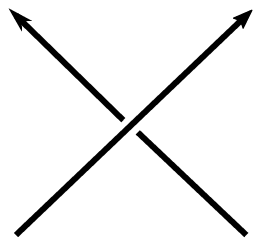 \\ \\ \begin{tabular}{l} $c = vav^{-1}$ \\ $d = a^{-1}v^{-1}bva$\end{tabular} \end{tabular} & & \begin{tabular}{c}
\def\svgwidth{.75in}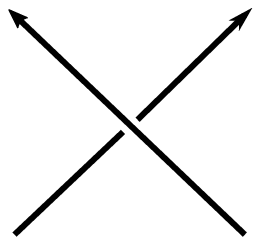 \\  \\ \begin{tabular}{l} $c=bvav^{-1}b^{-1}$ \\ $d=v^{-1}bv$ \end{tabular} \end{tabular} & \begin{tabular}{c}
\def\svgwidth{.75in}\input{group_rels_virt.eps_tex} \\  \\ none \end{tabular} \end{tabular}
}
\caption{Relations for the extension $\widetilde{G}(L)$ of $G(L)$.} \label{fig_ext_group_rels}
\end{figure}

\begin{definition}[The group $\widetilde{G}(L)$ \cite{alex_boden}]  The group $\widetilde{G}(L)$ is the group whose generators are in one-to-one correspondence with the short arcs of $L$, together with an auxiliary variable $v$. $\widetilde{G}(L)$ has two relations for each classical crossing of $L$, as shown in Figure \ref{fig_ext_group_rels}.
\end{definition}

\begin{remark} In \cite{bbc,acpaper}, the extension $\widetilde{G}(L)$ is denoted $\overline{G}_L$. We use the term \emph{extended group} rather than \emph{reduced virtual knot group} to avoid confusion with the term \emph{reduced link group} which is prevalent in the Milnor invariant literature (see e.g. \cite{HL}, \cite{abmw}).
\end{remark}

The group $\widetilde{G}(L)$ is an invariant of virtual link equivalence. To see that $\widetilde{G}(L)$ is an extension of $G(L)$, observe that the relations for $G(L)$ are recovered from $\widetilde{G}(L)$ by setting $v=1$ in Figure \ref{fig_ext_group_rels}. This implies that the following sequence is exact and that $\widetilde{G}(L)$ is indeed an extension of $G(L)$ (see also Silver-Williams \cite{silwill}).
\[
\xymatrix{ 1 \ar[r] & \langle\langle v \rangle\rangle \ar[r] & \widetilde{G}(L) \ar[r]^-{v \to 1} & G(L) \ar[r] & 1}.
\]
The above presentation for $\widetilde{G}(L)$ will be used here for the extended group as it is more natural with respect to the $\Zh$ map. Indeed, the extended group of $L$ is isomorphic to the group of $\Zh(L)$. 

\begin{theorem}[\cite{bbc}, Proposition 4.5] \label{thm_zh} For a virtual link $L$, $\widetilde{G}(L)\cong G(\Zh(L))$. The isomorphism sends the auxiliary variable $v$ to a meridian (i.e. arc) of the $\omega$ component of $\Zh(L)$.
\end{theorem}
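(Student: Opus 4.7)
The plan is to compare Wirtinger-style presentations. I would write down a Wirtinger presentation of $G(\Zh(L))$ directly from the diagram $\Zh(L)$ and then perform Tietze transformations to reduce it to the presentation of $\widetilde{G}(L)$ from Figure \ref{fig_ext_group_rels}. Under this identification the short arcs of $L$ correspond to Wirtinger generators attached to pieces of the $L$-component of $\Zh(L)$, and the auxiliary variable $v$ is the Wirtinger generator of the single component $\omega$, which is by construction a meridian of $\omega$.

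The first step is to identify the Wirtinger generators of $G(\Zh(L))$. By the definition of $\Zh$, the component $\omega$ never passes under a classical crossing: its only classical interactions with $L$ are overcrossings, and its self-intersections are all virtual. Hence $\omega$ contributes a single generator $v$. The $L$-component acquires two extra undercrossings in $\Zh(L)$ near each classical crossing of $L$ (one on the over-strand and one on the under-strand), so its Wirtinger generators fall into pairs: for each short arc one generator lies ``away from'' the crossing, and an ``intermediate'' generator is squeezed between the $\omega$-overpass and the original double point.

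Next I would analyze each classical crossing of $L$ locally. The $\Zh$-construction replaces it by three classical crossings of $\Zh(L)$: the original double point together with the two $\omega$-overpassings. Each contributes one Wirtinger relation. The two relations from the $\omega$-overpassings express the intermediate generators as $v^{\pm 1}$-conjugates of the short-arc generators, and I would use them to Tietze-eliminate the intermediate generators. Substituting into the third relation (the one at the original double point) and simplifying yields precisely the pair $c = v a v^{-1}$, $d = a^{-1} v^{-1} b v a$ at a positive crossing (and the analogous pair at a negative crossing) displayed in Figure \ref{fig_ext_group_rels}. Since Tietze moves are reversible, this identifies the two presentations and produces the desired isomorphism.

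The main obstacle is the careful bookkeeping of orientations, signs, and the various local configurations of $\omega$ near each crossing. A full verification requires handling both positive and negative crossings as well as the several ways the two $\omega$-overpass arcs can be stitched to the long $\omega$ component. I would handle this by working out one standard positive and one standard negative crossing in detail and then invoking the well-definedness of $\Zh(L)$ up to semi-welded equivalence (together with the welded invariance of $G(\cdot)$ from Theorem \ref{thm_satoh}) to conclude that any other local choice yields the same presentation up to isomorphism.
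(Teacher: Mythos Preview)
Your approach is correct and is the natural way to establish this isomorphism; the paper itself does not prove the theorem but simply cites \cite{bbc}, Proposition~4.5, where precisely this Tietze-transformation comparison of presentations is carried out. Your use of the semi-welded well-definedness of $\Zh(L)$ together with the welded invariance of $G(\cdot)$ to avoid case-checking every local $\omega$-configuration is a clean shortcut, though note that the welded invariance of $G$ is stated in the paragraph preceding Theorem~\ref{thm_satoh} rather than in that theorem itself.
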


Combining this with Theorem \ref{thm_satoh}, we have the following geometric realization of $\widetilde{G}(L)$.

\begin{corollary} $\widetilde{G}(L) \cong G(\Zh(L))\cong \pi_1(S^4 \smallsetminus \text{Tube} \circ\Zh(L))$.
\end{corollary}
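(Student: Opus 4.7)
The statement is a corollary, so my plan is to simply chain the two isomorphisms provided by the preceding theorems. First I would apply Theorem \ref{thm_zh} to obtain the first isomorphism $\widetilde{G}(L) \cong G(\Zh(L))$, which is already in the hypothesis pool. Then I would apply Theorem \ref{thm_satoh} to the virtual link diagram $\Zh(L)$ (rather than to $L$ itself) to obtain $G(\Zh(L)) \cong \pi_1(S^4 \smallsetminus \text{Tube}(\Zh(L)))$. Composing these two isomorphisms yields the claim.

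The only point that genuinely requires a moment's care is well-definedness. The link $\Zh(L)$ is only specified up to semi-welded equivalence, and one needs both $G(\Zh(L))$ and $\pi_1(S^4 \smallsetminus \text{Tube}\circ\Zh(L))$ to be independent of the choice of representative. For the first, the group of a virtual link diagram is invariant under Reidemeister moves, detour moves, and the forbidden overpass move, hence a fortiori under semi-welded equivalence (which is simply welded equivalence restricted to allow the forbidden overpass move only on the $\omega$ component). For the second, Satoh's theorem on the Tube map gives that any welded-equivalent—and therefore any semi-welded equivalent—choice of $\Zh(L)$ produces an ambient isotopic ribbon torus link in $S^4$, so the complement fundamental group is well-defined.

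There is no substantive obstacle: the corollary is a one-line consequence of chaining \ref{thm_zh} and \ref{thm_satoh}. If there were any hard part at all, it would be this well-definedness check, but even that is immediate once one notes that semi-welded equivalence implies welded equivalence. I would therefore present the proof as a single display
\[
\widetilde{G}(L) \;\cong\; G(\Zh(L)) \;\cong\; \pi_1\bigl(S^4 \smallsetminus \text{Tube}\circ\Zh(L)\bigr),
\]
with the first isomorphism justified by Theorem \ref{thm_zh} and the second by Theorem \ref{thm_satoh} applied to the virtual link diagram $\Zh(L)$, followed by the brief remark on independence from the semi-welded representative.
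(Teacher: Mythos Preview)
Your proposal is correct and matches the paper's approach exactly: the paper states the corollary immediately after Theorem~\ref{thm_zh} with the remark ``Combining this with Theorem~\ref{thm_satoh}'' and gives no further proof. Your additional well-definedness check is more than the paper bothers to write out, but it is accurate and harmless.
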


\section{Chen-Milnor Type Theorems} \label{sec_gen_chen_milnor}
\setcounter{subsection}{1}
The Chen-Milnor Theorem gives a presentation of the nilpotent quotients of the group of a classical link. Here we will require Chen-Milnor type theorems for the groups and extended groups of both virtual links and virtual string links. In this section, we give a single Chen-Milnor Theorem which will later be applied to each of these scenarios. First we fix some notation. Let $G$ be a group and $x,y \in G$. We will write commutators as $[x,y]=x^{-1}y^{-1}xy$. Higher order left bracketed commutators are defined inductively by:
\[
[x_1,x_2,x_3\ldots,x_k]:=[[x_1,\ldots,x_{k-1}],x_k].
\]
For example, $[w,x,y,z,w]=[[[w,x],y],z]$. For $x \in G$, we will sometimes write $\overline{x}=x^{-1}$. For an arbitrary group $G$ and subgroups $A,B \subseteq G$, let $[A,B]$ denote the subgroup generated by elements of the form $[a,b]$ for $a \in A$ and $b \in b$. The \emph{lower central series} is defined by $G_1=G$ and $G_{k+1}=[G_{k},G]$ for $k \ge 1$. This gives the series of normal subgroups $G \triangleright G_2 \triangleright G_3 \triangleright \cdots$. The \emph{nilpotent residual} is defined to be $G_{\omega}=\bigcap_{i=1}^{\infty} G_i$. The \emph{nilpotent quotients of $G$} are the quotients $Q_{q}(G)=G/G_q$ for $q \ge 2$.

Let $\langle t_1,\ldots,t_k | r_1,\ldots,r_l \rangle$ be a presentation of finitely presented group $G$. The presentation is said to be a \emph{Wirtinger-type presentation} if for all $i$, $r_i=t_{j}^{-1} u_i^{-1} t_k u_i$ for some $j,k$ and some word $u_i$ in $t_1,\ldots,t_k$. If $G$ has a Wirtinger-type presentation, we may partition the set of generators $\{t_1,\ldots,t_k\}$ into $m$ conjugacy classes $C_1,\ldots,C_m$. Let $n_i=|C_i|$. 

\begin{definition}[Serial presentation] \label{defn_serial} The set $C_i$ is said to be \emph{cyclic} if there is an ordering $c_{i,1},\ldots,c_{i,n_i}$ of the elements of $C_i$ such that the relations defining $C_i$ can be written in the form $c_{i,j+1}=\overline{w}_{i,j} c_{i,j} w_{i,j}$ for $1 \le j \le n_i$, with the indices of $c_{i,j}$ taken modulo $n_i$. The relation for a cyclic $C_i$ of the form $c_{i,1}=\overline{w}_{i,n_i} c_{i,n_i} w_{i,n_i}$ will be called the \emph{return relation}. A conjugacy class containing a single generator and no defining relations will be called \emph{(trivially) cyclic}. A set $C_i$ will be called \emph{serial} if $n_i \ge 2$ and its elements may be ordered $c_{i,1},\ldots,c_{i,n_i}$ so that its defining relations are of the form $c_{i,j+1}=\overline{w}_{i,j} c_{i,j} w_{i,j}$ for $1 \le j \le n_i-1$. A Wirtinger-type presentation will be called \emph{serial} if each of its conjugacy classes $C_1,\ldots,C_m$ is either cyclic or serial.
\end{definition}

\begin{remark} Serial conjugacy classes are needed for virtual string links (see Section \ref{sec_vsl}).
\end{remark}

\begin{example} Let $G=\langle a_1,a_2,a_3,a_4|a_2=\overline{a}_1 a_2 a_1,a_4=\overline{\overline{a}}_1 a_3\wwbar{a}_1 \rangle$. Then $G$ is a serial Wirtinger-type presentation, where $C_1=\{a_2\}$ is cyclic, $C_2=\{a_3,a_4\}$ is serial, and $C_3=\{a_1\}$ is trivially cyclic.
\end{example}

If $C_i$ is cyclic, set $l_{i,j}=w_{i,1} w_{i,2} \cdots w_{i,j}$ for $1 \le j \le n_i$. If $C_i$ is serial, set $l_{i,j}=w_{i,1} w_{i,2} \cdots w_{i,j}$ for $1 \le j \le n_i-1$. Then in both cases, we have the equivalent set of defining relations $c_{i,j+1}=\bar{l}_{i,j} c_{i,1} l_{i,j}$ for $G$. In the cyclic case, the return relation is of the form $c_{i,1}=\bar{l}_{i,n_i} c_{i,1} l_{i,n_i}$, where $l_{i,n_i}$ is the identity $e \in G$ if $C_i$ is trivially cyclic.

\begin{definition}\label{defn_word} For a cyclic conjugacy class (serial conjugacy class), set $\lambda_i=l_{i,n_i}$ (respectively, $\lambda_i=l_{i,n_i-1}$). Then $\lambda_i$ will be called a \emph{parallel word} and the pair $(c_{i,1},\lambda_i)$  will be called a \emph{meridian-parallel word pair}. If the exponent sum of the letters $c_{i,j}$ in $\lambda_i$ is zero, then $\lambda_i$ will be called a \emph{longitude word}. 
\end{definition}

Suppose $G$ has a serial Wirtinger-type presentation and let $\Phi$ be the free group generated by the set of $c_{i,j}$. For $q \ge 1$ we follow Milnor \cite{milnor} and define maps $p^{(q)}:\Phi \to \Phi$ recursively by.
\begin{align*}
p^{(1)}(c_{i,j}) &=c_{i,j} \\
p^{(q)}(c_{i,1}) &=c_{i,1} \\
p^{(q+1)}(c_{i,j+1}) &= p^{(q)}(\bar{l}_{i,j} c_{i,1} l_{i,j})
\end{align*}

\begin{lemma} \label{thm_pq_commutes} If $G$ has a serial Wirtinger-type presentation as above, then $p^{(q)}:\Phi \to \Phi$ descends to an automorphism $p^{(q)}:Q_{q}(G)\to Q_{q}(G)$. Moreover, the following diagram commutes, where the horizontal arrows are the natural projections. 
\[
\xymatrix{Q_{2}(G) \ar[d]^{p^{(2)}}_{\cong} & \ar[l] Q_{3}(G) \ar[d]^{p^{(3)}}_{\cong} & \ar[l] \cdots & \ar[l] Q_{q}(G) \ar[d]^{p^{(q)}}_{\cong} & \ar[l] Q_{q+1}(G) \ar[d]^{p^{(q+1)}}_{\cong} & \ar[l] \cdots \\ 
Q_{2}(G)  & \ar[l] Q_{3}(G)  & \ar[l] \cdots & \ar[l] Q_{q}(G) & \ar[l] Q_{q+1}(G) & \ar[l] \cdots}
\]
\end{lemma}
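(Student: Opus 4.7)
The plan is to prove a single strong claim from which the lemma will follow formally: for every $q\geq 1$ and every $x\in\Phi$, one has $p^{(q)}(x)\equiv x\pmod{\langle R_G\rangle}$, where $\langle R_G\rangle$ denotes the normal closure in $\Phi$ of the defining relations of $G$. Call this statement (*). Once (*) is established, the map $p^{(q)}$ will descend to the identity on $G=\Phi/\langle R_G\rangle$, and hence to the identity on every nilpotent quotient $Q_q(G)$. The identity is an automorphism, and the displayed square will collapse to the tautology that the natural projection $Q_{q+1}(G)\to Q_q(G)$ commutes with itself. Thus the whole lemma will reduce to proving (*).

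The proof of (*) will proceed by induction on $q$. The base case $q=1$ is immediate since $p^{(1)}$ is literally the identity on $\Phi$. For the inductive step, I first observe that once (*) is verified on every generator of $\Phi$ it will automatically hold on every word, using normality of $\langle R_G\rangle$: if $p^{(q)}(x)=x\,r_x$ and $p^{(q)}(y)=y\,r_y$ with $r_x,r_y\in\langle R_G\rangle$, then $p^{(q)}(xy)=xy\cdot(y^{-1}r_x y)\,r_y$ with both tail factors in $\langle R_G\rangle$. On the generator $c_{i,1}$ one has $p^{(q)}(c_{i,1})=c_{i,1}$, so (*) is trivial. On $c_{i,j+1}$ for $1\le j\le n_i-1$ (the argument is identical in the serial and cyclic cases), the recursion yields
\[
p^{(q)}(c_{i,j+1})=p^{(q-1)}\!\bigl(\bar{l}_{i,j}\,c_{i,1}\,l_{i,j}\bigr)\equiv \bar{l}_{i,j}\,c_{i,1}\,l_{i,j}\equiv c_{i,j+1}\pmod{\langle R_G\rangle},
\]
the first congruence coming from the inductive hypothesis applied letterwise and consolidated by normality, the second from the defining relation $c_{i,j+1}=\bar{l}_{i,j}\,c_{i,1}\,l_{i,j}$ of $G$. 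This closes the induction.

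The main obstacle is really just the bookkeeping involved in promoting (*) from generators to arbitrary words and in routing each error factor into $\langle R_G\rangle$; normality of $\langle R_G\rangle$ is doing the real work. Notably, no separate analysis of the return relation in the cyclic case will be needed: once (*) is in hand, $p^{(q)}$ descends through $\Phi/\langle R_G\rangle=G$ and therefore respects every defining relation, including the return relation $[c_{i,1},l_{i,n_i}]=1$. Conceptually, the lemma is saying that the explicit words $p^{(q)}(c_{i,j})\in\Phi$ form a compatible family of lifts of the generators of $Q_q(G)$---refining as $q$ grows while projecting consistently to $c_{i,j}$ in each nilpotent quotient.
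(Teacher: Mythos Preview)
Your proof is correct and, for the lemma as stated, cleaner than the paper's. You observe the strong fact that each $p^{(q)}$ already induces the \emph{identity} on $G=\Phi/\langle R_G\rangle$, so a fortiori the identity on every $Q_q(G)$; the diagram then commutes tautologically. The paper instead follows Milnor's original line: it proves only the weaker congruence $p^{(q)}(c_{i,j})\equiv c_{i,j}\pmod{G_q}$ via a conjugation lemma (if $a\equiv b\bmod G_q$ then $\bar a x a\equiv \bar b x b\bmod G_{q+1}$), and then separately shows $p^{(q)}(c_{i,j})\equiv p^{(q+1)}(c_{i,j})\pmod{\Phi_q}$ in the free group $\Phi$ to get commutativity. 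Your route avoids the conjugation bootstrap entirely and collapses the second step.

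One thing worth noting: the paper's proof yields, as a byproduct, the \emph{free-group} congruence $p^{(q)}\equiv p^{(q+1)}\pmod{\Phi_q}$, and this is invoked verbatim in the proof of the next result (the Chen--Milnor type theorem), where one needs $\phi^{(q)}(\bar c_{i,j+1}\,\bar l_{i,j}c_{i,1}l_{i,j})\in F_q$. Your statement $(\ast)$ lives modulo $\langle R_G\rangle$, not modulo $\Phi_q$, so it does not directly supply that ingredient. This is not a gap in your proof of the present lemma---the lemma only concerns $Q_q(G)$---but if you continue to the Chen--Milnor theorem you would need to prove $p^{(q)}(c_{i,j})\equiv p^{(q+1)}(c_{i,j})\pmod{\Phi_q}$ separately (the paper's second induction does exactly this, and it is short).
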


\begin{proof} The proof follows that of Milnor \cite{milnor}, Theorem 4. First observe that if $a,b,x \in G$ and $a \equiv b \mod{G_{q}}$ then $\bar{a} x a \equiv \bar{b} x b \mod G_{q+1}$. Now, to see that $p^{(q)}$ is an isomorphism, it is sufficient to show that $p^{(q)}(c_{i,j}) \equiv c_{i,j} \mod G_{q}$. This will be proved by induction. Certainly the claim is true for $q=1$ and for all $q$ if $j=1$. Assume the claim is true up to $q$. Then $p^{(q)}(l_{i,j}) \equiv l_{i,j} \!\!\mod G_{q}$ and it follows from the initial observation that: 
\[
p^{(q)}(\bar{l}_{i,j} c_{i,1} l_{i,j})=p^{(q)}(l_{i,j})^{-1}p^{(q)}(c_{i,1}) p^{(q)}(l_{i,j}) \equiv \bar{l}_{i,j} c_{i,1} l_{i,j} \!\!\!\mod G_{q+1}.
\]
Thus, $p^{(q+1)}(c_{i,j+1})=c_{i,j+1} \mod G_{q}$ and the claim follows by induction. This implies that $p^{(q)}(G_{q}) \subseteq G_{q}$ and that $p^{(q)}$ is both injective and surjective on the nilpotent quotients. 

To prove that the diagram commutes, it is sufficient to show that $p^{(q)}(c_{i,j}) \equiv p^{(q+1)}(c_{i,j}) \!\mod \Phi_{q}$.  The claim holds if $q=1$ or $j=1$ by definition of $p^{(q)}$. Suppose then the claim holds up to $q$. Then $p^{(q)}(l_{i,j}) \equiv p^{(q+1)}(l_{i,j}) \!\mod \Phi_{q}$. Again by the initial observation:
\[
p^{(q+1)}(c_{i, j+1}) = p^{(q)}(\bar{l}_{i,j} c_{i,1} l_{i,j}) \equiv p^{(q+1)}(\bar{l}_{i,j}) c_{i,1} p^{(q+1)}(l_{i,j}) \equiv p^{(q+2)}(c_{i,j+1})\!\!\!\mod \Phi_{q+1}. 
\] 
Thus, $p^{(q)}(c_{i,j}) \equiv p^{(q+1)}(c_{i,j}) \!\mod \Phi_{q}$ and the lemma is proved.
\end{proof}

Let $G$ be a serial Wirtinger-type presentation and let $C_1,\ldots,C_r,C_{r+1}\ldots,C_m$ be the conjugacy classes of its generators, where $C_{r+1},\ldots,C_m$ are the cyclic classes and $r \le m$. For $1 \le i \le m$, let $c_i=c_{i,1}$. Then the return relations are given by $c_{i}=\bar{\lambda}_ic_i\lambda_i$, where $(c_{i},\lambda_i)$ is a meridian-parallel word pair. Let $F$ be the free group on $c_1,\ldots,c_m$. Define $\phi:\Phi \to F$ by $\phi(c_{i,j})=c_i$ and $\phi^{(q)}:\Phi \to F$ by $\phi^{(q)}=\phi\circ p^{(q)}$. Below is the Chen-Milnor type theorem that will be used throughout the text. The proof again follows Milnor \cite{milnor}, Theorem 4.

\begin{theorem} \label{thm_gen_chen_milnor} For a serial Wirtinger-type presentation of $G$ and notation as above, the nilpotent quotients of $G$ have the following presentation:  
\[
Q_{q}(G) \cong \langle c_{1},\ldots,c_r,c_{r+1},\ldots,c_{m}| [c_{r+1},\phi^{(q)}(\lambda_{r+1})],\ldots,[c_{m},\phi^{(q)}(\lambda_{ m})],F_{q} \rangle.
\]
\end{theorem}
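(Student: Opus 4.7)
The strategy closely follows Milnor's original proof of Theorem~4 in \cite{milnor}. Let $H$ denote the group on the right-hand side of the claimed isomorphism and let $\eta\colon F \to G$ be the homomorphism sending $c_i \mapsto c_i$. My plan is to construct mutually inverse homomorphisms between $Q_q(G)$ and $H$ by direct generator-by-generator definitions, after first establishing a key congruence.

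The key preliminary step is to show that
\[
\eta(\phi^{(q)}(c_{i,j})) \equiv c_{i,j} \pmod{G_q}
\]
holds in $G$ for every generator $c_{i,j}$. I would prove this by induction on $q$. The case $q=1$ is vacuous since $G_1 = G$. For the inductive step, the defining recursion $p^{(q+1)}(c_{i,j+1}) = p^{(q)}(\bar{l}_{i,j} c_{i,1} l_{i,j})$ gives
\[
\eta(\phi^{(q+1)}(c_{i,j+1})) = \overline{\eta(\phi^{(q)}(l_{i,j}))} \cdot c_{i,1} \cdot \eta(\phi^{(q)}(l_{i,j})).
\]
Applying the induction hypothesis letter-by-letter to the word $l_{i,j}$ and using normality of $G_q$ yields $\eta(\phi^{(q)}(l_{i,j})) \equiv l_{i,j} \pmod{G_q}$. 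The observation at the start of the proof of Lemma~\ref{thm_pq_commutes} (if $a \equiv b \pmod{G_q}$ then $\bar{a} x a \equiv \bar{b} x b \pmod{G_{q+1}}$) then delivers $\eta(\phi^{(q+1)}(c_{i,j+1})) \equiv \bar{l}_{i,j} c_{i,1} l_{i,j} = c_{i,j+1} \pmod{G_{q+1}}$.

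With this in hand, I would define $\Theta\colon H \to Q_q(G)$ by $c_i \mapsto c_i$. Well-definedness requires that elements of $F_q$ map into $(Q_q(G))_q = 1$, which is automatic, and that for each cyclic $C_i$ with $i>r$ the return relation $c_i = \bar{\lambda}_i c_i \lambda_i$ in $G$, which gives $[c_i, \lambda_i] = 1$, combines with the preliminary congruence applied to $\lambda_i$ to yield $[c_i, \phi^{(q)}(\lambda_i)] = 1$ in $Q_q(G)$. Conversely, I would define $\Psi\colon Q_q(G) \to H$ by $c_{i,j} \mapsto \phi^{(q)}(c_{i,j})$. For a serial relation $c_{i,j+1} = \bar{l}_{i,j} c_{i,1} l_{i,j}$, the recursion for $p^{(q+1)}$ combined with the stabilization $\phi^{(q+1)} \equiv \phi^{(q)} \pmod{F_q}$ from Lemma~\ref{thm_pq_commutes} shows the relation holds in $H$; for a return relation with $i>r$, applying $\phi^{(q)}$ directly gives the defining commutator relation of $H$; and $G_q$ maps into $F_q$, which is trivial in $H$.

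Finally, both compositions are identities on generators: $\Psi \circ \Theta(c_i) = \phi^{(q)}(c_{i,1}) = c_i$, and $\Theta \circ \Psi(c_{i,j}) = \eta(\phi^{(q)}(c_{i,j})) \equiv c_{i,j} \pmod{G_q}$ by the preliminary step. The main obstacle is precisely that preliminary congruence: controlling how far the ``meridianized'' word $\eta(\phi^{(q)}(c_{i,j}))$ deviates from the generator $c_{i,j}$ inside $G$ requires carefully matching the recursive structure of $p^{(q)}$ against the Wirtinger relations at each successive level of the lower central series, and this is where the serial hypothesis on the presentation is used to guarantee that the iterative substitutions terminate cleanly modulo $G_q$.
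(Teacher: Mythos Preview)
Your proposal is correct and follows essentially the same route as the paper: both arguments track Milnor's original proof and hinge on the same two ingredients, namely the stabilization $\phi^{(q+1)}\equiv\phi^{(q)}\pmod{F_q}$ from Lemma~\ref{thm_pq_commutes} to kill the non-return relations, and the return relations producing exactly the commutator relators $[c_i,\phi^{(q)}(\lambda_i)]$. The only difference is packaging: the paper shows directly that $\phi^{(q)}$ descends to an isomorphism $\Phi/(N\Phi_q)\to F/(RF_q)$, leaving the inverse implicit, whereas you write down the inverse $\Theta$ explicitly and verify both compositions on generators; your ``preliminary congruence'' $\eta(\phi^{(q)}(c_{i,j}))\equiv c_{i,j}\pmod{G_q}$ is exactly the check that $\Theta\circ\Psi=\mathrm{id}$, which the paper absorbs into its appeal to Lemma~\ref{thm_pq_commutes}.
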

\begin{proof} First note that $G/G_{2}=G/[G,G]$ is abelian. Then $p^{(2)}(c_{i,j}) \equiv c_{i,1} \mod G_{2}$ for all $i,j$. Indeed, this is true by definition if $j=1$ and if $j>1$, we have: 
\[
p^{(2)}(c_{i,j+1})=p^{(1)}(\bar{l}_{i,j} c_{i,1} l_{i,j})=\bar{l}_{i,j} c_{i,1} l_{i,j} \equiv c_{i,1} \mod G_{2}
\]
Since $p^{(q+1)}$ is defined recursively in terms of $p^{(q)}$, this implies that the image of $p^{(q)}$ is generated by $c_{1,1},\ldots,c_{1,m}$ for all $q\ge 2$. 

Let $N$ be the normal subgroup of $\Phi$ generated by the relations of $G$. Since $\phi(\Phi_{q})=F_{q}$, it follows that $\phi^{(q)}(\Phi_{q})=F_{q}$.  By Lemma \ref{thm_pq_commutes},
\[
p^{(q)}(c_{i,j+1}) \equiv p^{(q+1)}(c_{i,j+1})=p^{(q)}(\bar{l}_{i,j}c_{i,1}l_{i,j}) \mod \Phi_{q}.
\]
Thus, if $j\le n_i-1$, $\phi^{(q)}(\bar{c}_{i,j+1} \bar{l}_{i,j}c_{i,1} l_{i,j}) \in F_{q}$. If $ i \le r$, all relations for the conjugacy class $C_i$ are of this form. Furthermore, if $r+1 \le i \le m$, this implies that every relation of $G$ that is not a return relation is mapped to $F_{q}$. Thus, if $R$ is the normal subgroup of $F$ generated by the image of the return relations, $\phi^{(q)}$ descends to an isomorphism: 
\[
\phi^{(q)}:\frac{\Phi}{N\Phi_{q}} \stackrel{\cong}{\longrightarrow}  \frac{F}{RF_{q}}
\]
Lastly, observe that if $i \ge r+1$, the effect of $\phi^{(q)}$ on the return relation of $C_i$ is:
\[
\phi^{(q)}(\bar{c}_{i,1}\bar{l}_{i,n_i}c_{i,1}l_{i,n_i})=[c_i,\phi \circ p^{(q)}(l_{i,n_i})].
\]
Thus, $\phi^{(q)}$ is an isomorphism and the proof is complete. \end{proof}

\section{Milnor $\&$ extended Milnor invariants} 
\label{sec_milnor_extended milnor}
This section defines concordance invariants from the nilpotent quotients of $G(L)$ and the extension $\widetilde{G}(L)$. The case of $G(L)$ is treated first and these results are extended to $\widetilde{G}(L)$ using the $\Zh$ map. First, we give a direct proof that the virtual linking number is a welded-concordance invariant (Section \ref{sec_vlk}). The $\bar{\mu}$-invariants of virtual links can be viewed as higher order virtual linking numbers (see Section \ref{sec_shuffles}). Section \ref{sec_longitudes} discusses longitudes of virtual links and their relation to longitudes of ribbon torus links. The properties of these longitudes are needed in the proof that the $\bar{\mu}$-invariants are invariant under concordance (Section \ref{sec_mu}). The definition of the $\overline{\zh}$-invariants of virtual knots appears in Section \ref{sec_wbar_zh}.

\subsection{Virtual linking number} \label{sec_vlk} Recall that the \emph{writhe} of a virtual knot diagram is the sum of its crossing signs. The writhe is invariant only under $\Omega 2$, $\Omega 3$, and detour moves. If $J \cup K$ is a $2$-component oriented virtual link diagram, the \emph{virtual linking number} $\vlk(J,K)$ is the number of overcrossings of $J$ with $K$, counted with sign. The virtual linking number is unchanged by Reidemeister moves and detour moves and thus it is an invariant of virtual links. Unlike the classical linking number, the virtual linking number is not symmetric.  

\begin{example} For the link in Figure \ref{fig_writhe_example}, $\vlk(J,K)=1 \ne 0=\vlk(K,J)$. Also, both $J,K$ are equivalent to the unknot. However, $\text{writhe}(J)=0$ and $\text{writhe}(K)=-2$.
\end{example}

\begin{figure}[htb]
\centerline{
\begin{tabular}{c} \\
 \def\svgwidth{1.6in}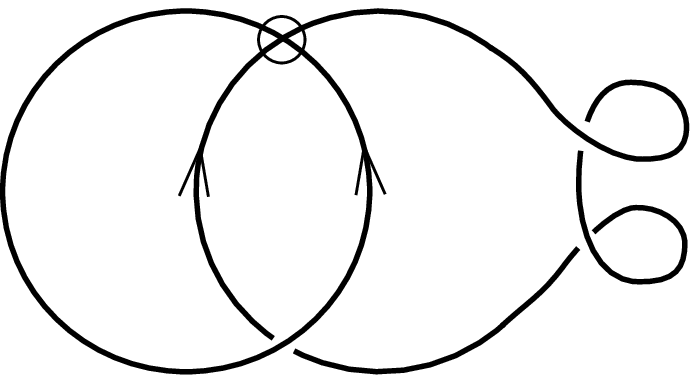
\end{tabular}
}
\caption{The virtual linking number is not symmetric.} \label{fig_writhe_example}
\end{figure}

\begin{proposition} \label{prop_vlk} The virtual linking number is a welded-concordance invariant.
\end{proposition}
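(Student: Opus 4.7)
My plan is to reduce the proposition to checking that $\vlk(J,K)$ is preserved by each local move generating welded concordance. Invariance under Reidemeister and detour moves has already been recorded (so $\vlk$ is at least a virtual link invariant), which leaves only the forbidden overpass move together with the births, deaths, and saddles that, by definition of componentwise virtual concordance, occur within the single-component concordance of either $J$ or $K$. Because the concordance of (say) the $J$ component may pass through intermediate diagrams containing several $J$-type circles, I would first extend $\vlk$ by additivity: for a multi-circle family $\tilde J$ occurring at some intermediate stage, set $\vlk(\tilde J, K) := \sum_{C \subset \tilde J} \vlk(C, K)$. This agrees with $\vlk(J,K)$ at the initial diagram and with $\vlk(J^*,K^*)$ at the final one, so the task reduces to showing the additive extension is constant through every intermediate step.

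Next I would check each move in turn. For the forbidden overpass, the two interchanged crossings both remain overcrossings along the same over-strand, and the local strand orientations are unaffected by the move, so each crossing keeps its sign and the signed count of $\tilde J$-over-$K$ crossings is preserved. For a birth or death in the $J$-concordance, a small unknotted circle disjoint from everything else is inserted or removed, contributing zero to $\vlk(\tilde J, K)$. For a saddle in the $J$-concordance, the move is supported in a disc meeting only two short arcs of $\tilde J$ and no strand of $K$, so neither $J$-over-$K$ nor $K$-over-$J$ crossings can be created or destroyed. The symmetric cases of births, deaths, and saddles in the $K$-concordance are identical, except that one must also note that saddles in $K$ do not alter the set of $J$-overcrossings with $K$ because no $J$-arc enters the supporting disc.

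Assembling these local checks, $\vlk(\tilde J, K)$ is constant along the entire welded concordance, and evaluating at the two endpoints yields $\vlk(J,K) = \vlk(J^*,K^*)$. I do not expect any serious obstacle: the only delicate points are the bookkeeping required to track multi-circle intermediate stages (handled cleanly by the additive extension) and the verification that the forbidden overpass preserves the over/under role of each arc as well as its crossing sign, both of which are immediate from the local picture in Figure~\ref{fig_forbidden}.
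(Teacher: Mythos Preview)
Your proposal is correct and follows essentially the same approach as the paper: both arguments extend $\vlk$ additively over multi-circle intermediate stages (the paper calls this the \emph{total linking number}) and then verify invariance under each generating move of welded concordance. The only cosmetic difference is that you spell out the forbidden overpass and saddle checks in slightly more detail than the paper, which simply asserts them.
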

\begin{proof} Let $J=J_1 \cup \cdots \cup J_m$ be an $m$-component virtual link diagram, $K=K_1 \cup \cdots \cup K_n$ be an $n$-component virtual link diagram, and $J \cup K$ an $(m+n)$-component virtual link diagram containing $J$ and $K$ as sublinks with no components in common. Define the \emph{total linking number} to be $\vlk(J,K)=\sum_{i=1}^m \sum_{j=1}^n \vlk(J_i,K_j)$. Clearly, $\vlk(J,K)$ is unchanged by Reidemeister moves, detours, and forbidden overpass moves. Suppose $J^*$ is an $(m+1)$-component virtual link diagram obtained from $J$ by performing a saddle move on $J_i$. Then $J_i$ is replaced with $J_i^* \cup J_i^{**}$. Since every overcrossing of $J_i$ with $K$ occurs as either an overcrossing of $J_i^*$ or $J_i^{**}$ with $K$, it follows that $\vlk(J^*,K)=\vlk(J,K)$. Similarly, if $K^*$ is obtained from $K$ by a saddle, then $\vlk(J,K)=\vlk(J,K^*)$. If $L \sqcup \bigcirc$ denotes a birth, then $\vlk(J \sqcup \bigcirc,K)=\vlk(J,K \sqcup \bigcirc)=\vlk(J,K)$.

Now suppose that $L=J \cup K$ and $L^*=J^*\cup K^*$ are welded-concordant virtual links. Then there is a sequence $L=L_1,\ldots,L_r=L^*$ of links such that $L_{i+1}$ can be obtained from $L_i$ by a single birth, death, saddle, detour, forbidden overpass, or Reidemeister move. The above argument implies that the total virtual linking number is unchanged at each step, and hence $\vlk(J,K)=\vlk(J^*,K^*)$. 
\end{proof}

\subsection{Virtual ribbons and longitudes} \label{sec_longitudes} We begin with a diagrammatic definition of a parallel of a virtual knot which is then related to the algebraic definition given in Section \ref{sec_gen_chen_milnor}. A longitude is simply a parallel of a virtual knot $\upsilon$ having virtual linking number $0$ with $\upsilon$ (see Definition \ref{defn_long} ahead). Lastly, we relate longitudes of a virtual link $L$ to longitudes of the ribbon torus link $\text{Tube}(L)$.

\begin{definition}[Virtual ribbon, parallel] A \emph{virtual ribbon} is an immersed annulus in $\mathbb{R}^2$ whose singularities are double points occurring only in classical or virtual crossings of bands, as in Figure \ref{fig_band_cross}. The image of the boundary of the virtual ribbon is a $2$-component virtual link diagram whose components are assumed to be co-oriented. If $K$ is a virtual knot diagram that is one component of a virtual ribbon boundary, then the other component is called a \emph{parallel} of $K$.   
\end{definition}

\begin{figure}[htb]
\centerline{
\begin{tabular}{cc}
\begin{tabular}{c}
\def\svgwidth{1.3in}
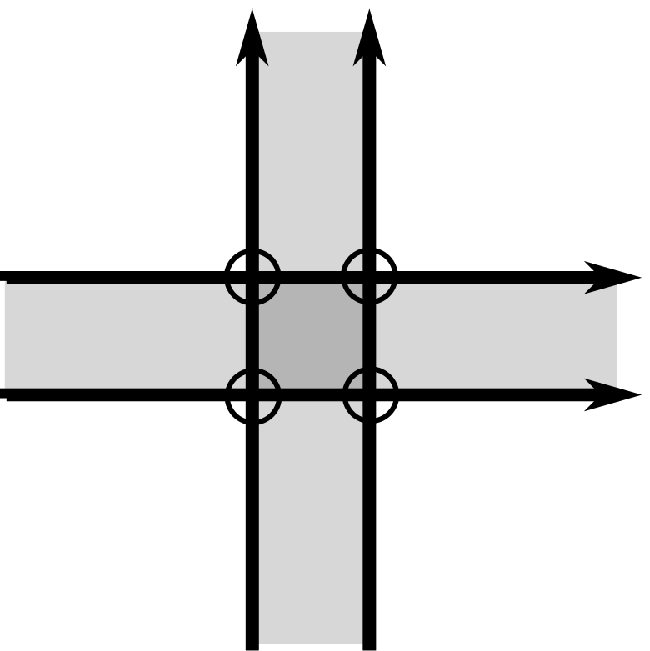
\end{tabular} &
\begin{tabular}{c}
\def\svgwidth{1.3in}
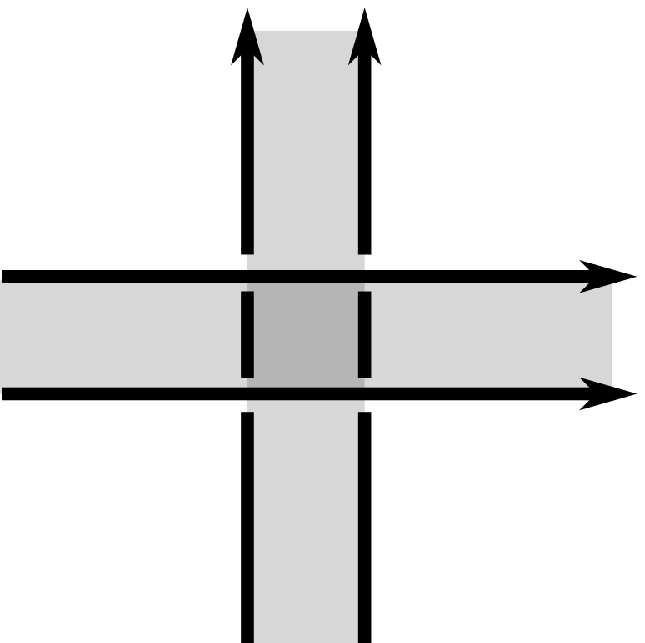
\end{tabular}
\end{tabular}
}
\caption{A virtual crossing of bands (left) and a classical crossing of bands (right).} \label{fig_band_cross}
\end{figure}

\begin{example} Given a diagram of $K$, a virtual ribbon can be found by thickening the arcs of $K$ and drawing the crossings of bands as in Figure \ref{fig_band_cross}. A virtual ribbon and a parallel of 2.1 is drawn on the left of Figure \ref{fig_ribbon_ex}. The center of Figure \ref{fig_ribbon_ex} shows a different virtual ribbon and a different parallel $\ell$ of $2.1$.
\end{example}

\begin{figure}[htb]
\centerline{
\begin{tabular}{ccc} \\
\begin{tabular}{c} \def\svgwidth{1.65in}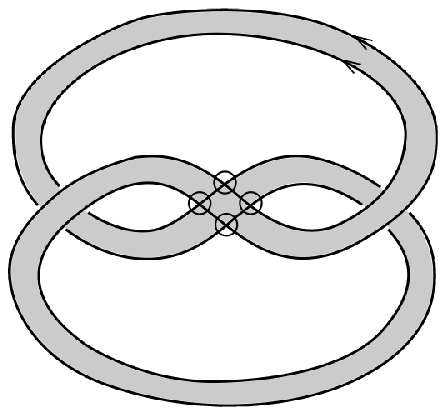 \end{tabular} &
\begin{tabular}{c} \def\svgwidth{1.65in}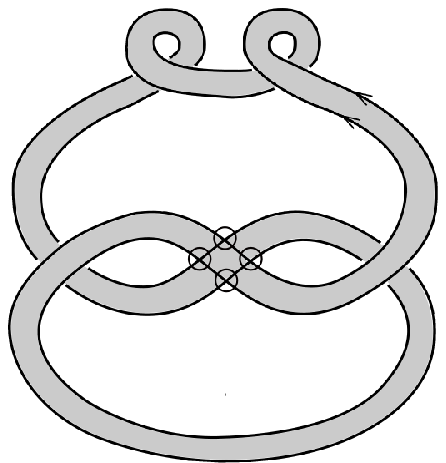 \end{tabular} &
\begin{tabular}{c} \def\svgwidth{1.65in}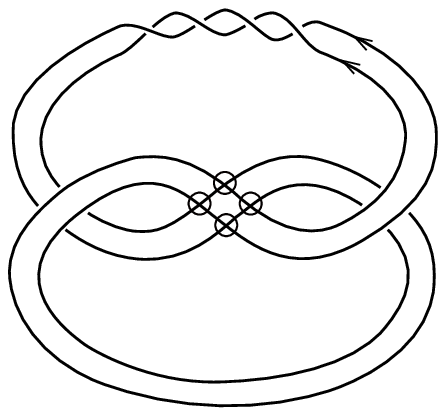 \end{tabular} 
\end{tabular}
}
\caption{A virtual ribbon showing a parallel $K'$ of $K$ (left), a virtual ribbon showing a longitude $\ell$ of $K$ (center), and $\ell$ without the virtual ribbon (right).} \label{fig_ribbon_ex}
\end{figure}

\begin{lemma} If $K'$ is a parallel of $K$. Then $K' \leftrightharpoons K$ and: 
\[
\vlk(K,K')=\vlk(K',K)=\text{writhe}(K)=\text{writhe}(K').
\]
\end{lemma}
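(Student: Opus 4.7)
The plan is to do a local analysis at each crossing of bands in the virtual ribbon and read off what the boundary knot $K \cup K'$ looks like. At a virtual crossing of bands (Figure \ref{fig_band_cross}, left), the four induced crossings on the boundary are all virtual and contribute nothing to writhe or to virtual linking number. At a classical crossing of bands (Figure \ref{fig_band_cross}, right), the two boundary strands of the upper band each cross over the two boundary strands of the lower band, producing four classical crossings, all with the same sign $\varepsilon\in\{\pm 1\}$. Since the boundary circles are co-oriented, these four crossings decompose canonically as one self-crossing of $K$, one self-crossing of $K'$, one crossing of $K$ over $K'$, and one crossing of $K'$ over $K$, each carrying sign $\varepsilon$.

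From this local picture, the self-crossings of $K'$ are in sign-preserving bijection with those of $K$, and the cyclic order in which they are encountered along $K'$ agrees with that along $K$ (after absorbing the induced virtual crossings via detour moves). Hence $K$ and $K'$ have identical Gauss diagrams, which gives $K'\leftrightharpoons K$ and in particular $\mathrm{writhe}(K)=\mathrm{writhe}(K')$. Summing the mixed contributions from Step one over all classical crossings of bands yields
\[
\vlk(K,K') \;=\; \sum_{\text{classical crossings of bands}} \varepsilon \;=\; \vlk(K',K),
\]
and the same sum also equals $\mathrm{writhe}(K)$ because each classical crossing of bands contributes exactly one self-crossing of $K$ of sign $\varepsilon$.

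The one genuinely delicate point is the sign bookkeeping at a classical crossing of bands: one must verify that the induced mixed crossings $K/K'$ and $K'/K$ both carry the same sign as the self-crossings, and this in turn rests on the co-orientation condition on the two boundary components of the annulus. Once this is checked in the local model (reducing via detour moves to the standard picture of Figure \ref{fig_band_cross}), the three equalities collapse to the single statement that each classical crossing of bands contributes $\varepsilon$ to each of the four quantities in question, completing the proof.
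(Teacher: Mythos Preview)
Your proof is correct and follows essentially the same approach as the paper: both rest on the observation that each classical crossing of bands produces exactly four classical crossings of the boundary link---one self-crossing of $K$, one of $K'$, one overcrossing of $K$ with $K'$, and one of $K'$ with $K$---all of the same sign. The paper dispatches the first claim ($K'\leftrightharpoons K$) in a single phrase (``clear from the definition''), whereas you spell it out via the equality of Gauss diagrams; this is a reasonable elaboration but not a different idea.
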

\begin{proof} The first claim is clear from the definition. For the second claim, observe that each classical crossing of bands has four crossings: a crossing of $K$ with itself, a crossing of $K'$ with itself, an overcrossing of $K$ with $K'$, and an overcrossing of $K'$ with $K$. Since each of these crossings has the same sign, the claim follows.
\end{proof}

\begin{lemma} \label{lemma_parallels_wd} Let $K,K^*$ be virtual knot diagrams. If $K'$ is a parallel of $K$, $K\leftrightharpoons K^*$ (respectively, $K \leftrightharpoons_w K^*)$, and $\text{writhe}(K)=\text{writhe}(K^*)$, then there is a parallel $(K^*)'$ of $K^*$ such that $K \cup K' \leftrightharpoons K^* \cup (K^*)'$ (respectively, $K \cup K' \leftrightharpoons_w K \cup (K^*)'$).
\end{lemma}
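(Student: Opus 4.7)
The plan is to prove the lemma by induction on the length of a sequence of moves realizing $K \leftrightharpoons K^*$ (or $K \leftrightharpoons_w K^*$ in the welded case). Simultaneously with each move on $K$, I would extend the move to the virtual ribbon whose boundary contains $K$ and $K'$, thereby producing a sequence of moves on the 2-component link $K \cup K'$ whose endpoint is $K^* \cup (K^*)'$, with $(K^*)'$ the parallel of $K^*$ determined by the final state of the ribbon.

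For $\Omega 2$, $\Omega 3$, detour moves, and (in the welded case) forbidden overpass moves on $K$, the doubled version on the ribbon is obtained by replacing each classical or virtual crossing of arcs in the move with the corresponding crossing of bands from Figure \ref{fig_band_cross}. Each such extension decomposes into a sequence of moves of the same types applied to pairs of bands on $K \cup K'$. In particular, the ribbon structure is preserved and each intermediate diagram consists of a knot together with a parallel.

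The main obstacle is the $\Omega 1$ move. Extending an $\Omega 1$ move on $K$ to the ribbon requires a full twist of the band, which changes the virtual linking number of the boundary by $\pm 1$. Since $\vlk$ is an invariant of virtual links (and of welded links, by Proposition \ref{prop_vlk}), an individual $\Omega 1$ cannot be doubled to a sequence of moves on $K \cup K'$. The writhe hypothesis $\text{writhe}(K) = \text{writhe}(K^*)$ is precisely what resolves this difficulty: it guarantees that the signed count of $\Omega 1$ moves in the sequence from $K$ to $K^*$ is zero, so they may be pre-processed (using detour moves to slide kinks around the diagram and juxtapose them) into balanced $\Omega 1$ pairs, each consisting of a positive kink followed by a canceling negative kink.

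Each balanced $\Omega 1$ pair doubles on the ribbon to a pair of canceling twists, which is an isotopy of the ribbon up to two extra local band crossings. Diagrammatically on $K \cup K'$, this extension is realized by a balanced $\Omega 1$ pair on $K$, a balanced $\Omega 1$ pair on $K'$, and two canceling $\Omega 2$ moves between $K$ and $K'$, producing a net zero change in the virtual linking number as required. Composing all the doubled moves yields the sought-after sequence from $K \cup K'$ to $K^* \cup (K^*)'$, completing the proof in both the equivalence and welded equivalence settings.
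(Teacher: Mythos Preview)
Your overall strategy matches the paper's: lift each move on $K$ to the virtual ribbon, so that $K\cup K'$ is carried along, and observe that $\Omega 2$, $\Omega 3$, detour, and forbidden overpass moves double without difficulty while $\Omega 1$ is the sole obstruction.

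The gap is in your treatment of $\Omega 1$. You assert that because the signed count of $\Omega 1$ moves is zero, they ``may be pre-processed (using detour moves to slide kinks around the diagram and juxtapose them) into balanced $\Omega 1$ pairs.'' But the $\Omega 1$ moves in the given sequence occur at different moments, separated by other moves that may change the diagram substantially; a kink created by one $\Omega 1$ can participate in an intervening $\Omega 3$ (or be destroyed) before its would-be partner ever occurs, so there is nothing to slide and juxtapose. Commuting $\Omega 1$ moves past arbitrary intervening moves, or otherwise globally rearranging them into adjacent pairs, is precisely the nontrivial step, and your sketch does not justify it.

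The paper's fix is local rather than global: instead of pairing \emph{existing} $\Omega 1$ moves across time, it pairs each $\Omega 1$ with a \emph{new} opposite-sign $\Omega 1$ inserted at the same instant. Whenever the original sequence adds or deletes a curl, the modified sequence also performs a compensating $\Omega 1$ so that the writhe is unchanged at every intermediate stage. Each such adjacent pair lifts to the ribbon as a pair of oppositely signed full twists, which cancel (Figure~\ref{fig_opp_twists_cancel}). The writhe hypothesis then guarantees that the compensating curls accumulated at the end occur in equal numbers of each sign and can themselves be removed in canceling pairs. This local replacement is what should stand in for your pre-processing step.
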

\begin{proof} There is a sequence of moves from $K$ to $K^*$ such that all intermediate diagrams have the same writhe. Indeed, detours, $\Omega 2$, $\Omega 3$, and forbidden overpass moves do not change the writhe. If an $\Omega 1$ move appears in the sequence from $K$ to $K^*$, replace it by a pair of $\Omega 1$ moves as follows. If an $\Omega 1$ move of deletes a curl, add a single second curl with opposite crossing sign, so that the writhe remains unchanged. On the other hand, if an $\Omega 1$ move adds a single curve, add a pair of oppositely signed curls. The same sequence of moves can be applied to the virtual ribbons, where the effect of adding/deleting oppositely signed curls is to add/delete a pair of oppositely signed full twists (see Figure \ref{fig_opp_twists_cancel}). Hence, $K'$ is transported to a parallel $(K^*)'$ of $K^*$.
\end{proof}

\begin{figure}[htb]
\centerline{
\begin{tabular}{c} \\
\def\svgwidth{4.5in}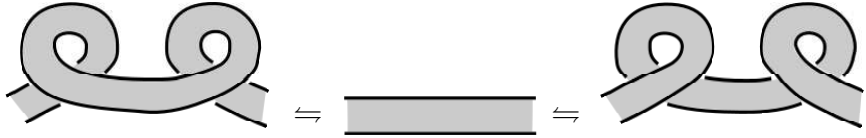
\end{tabular}
}
\caption{Opposite twists cancel.} \label{fig_opp_twists_cancel}
\end{figure}

\begin{definition}[Longitude] \label{defn_long} If $\ell$ is a parallel of $K$ and $\vlk(K,\ell)=0$, then $\ell$ is called a \emph{longitude} of $K$. If $K \cup K'$ is any 2-component link such that $K \cup K' \leftrightharpoons K \cup \ell $ for a longitude $\ell$ of $K$, then $K'$ will also be called a longitude of $K$.
\end{definition}

\begin{theorem} Longitudes are well-defined up to both virtual and welded equivalence.
\end{theorem}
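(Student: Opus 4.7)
The plan is to show that any two longitudes $\ell_1, \ell_2$ of $K$ yield equivalent 2-component virtual links $K \cup \ell_1 \leftrightharpoons K \cup \ell_2$. The welded statement follows from the same argument after replacing $\leftrightharpoons$ by $\leftrightharpoons_w$ throughout, using the welded conclusion of Lemma \ref{lemma_parallels_wd}.

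By Definition \ref{defn_long}, we may assume each $\ell_i$ is a parallel arising from a virtual ribbon over some diagram $D_i$ of $K$, with $\vlk(D_i, \ell_i) = 0$. The first step is to reduce to a common diagram. Fix any sequence of Reidemeister and detour moves between $D_1$ and $D_2$; by replacing each $\Omega 1$ move with the insertion or deletion of a pair of oppositely signed curls (as in the proof of Lemma \ref{lemma_parallels_wd}), we may assume the writhe is constant throughout the sequence. If $\text{writhe}(D_1) \neq \text{writhe}(D_2)$, we prepend further $\Omega 1$-pair moves to $D_1$ and extend the ribbon compatibly using Figure \ref{fig_opp_twists_cancel} so that $\vlk$ is preserved. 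Iterating Lemma \ref{lemma_parallels_wd} along the resulting writhe-constant sequence transports $\ell_1$ to a parallel $\tilde\ell_1$ of $D_2$ with $K \cup \ell_1 \leftrightharpoons K \cup \tilde\ell_1$ and $\vlk(D_2, \tilde\ell_1) = 0$.

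The second step is to show that two parallels $\tilde\ell_1, \ell_2$ of the common diagram $D_2$ with the same virtual linking number are equivalent as 2-component links. A virtual ribbon over $D_2$ is determined up to band isotopy by a choice of push-off side together with a finite signed number of inserted band-twists. Reflection of a ribbon through its core curve induces a virtual equivalence relating the two push-off sides, while pairs of opposite band-twists cancel under virtual equivalence by Figure \ref{fig_opp_twists_cancel}. Each inserted band-twist changes $\vlk(D_2, \cdot)$ by $\pm 1$, so the equality $\vlk(D_2, \tilde\ell_1) = \vlk(D_2, \ell_2) = 0$ forces the net inserted twist between the two ribbons to vanish. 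Hence $K \cup \tilde\ell_1 \leftrightharpoons K \cup \ell_2$, and combining with the first step yields $K \cup \ell_1 \leftrightharpoons K \cup \ell_2$.

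The main obstacle is the local structural claim in the second step: one must verify that any two virtual ribbons over a common diagram differ only by band isotopies, push-off reflections, and insertions of opposite twist pairs, and that each elementary such modification is either invisible on the resulting parallel or manifestly realized by the moves in Figure \ref{fig_opp_twists_cancel}. All other ingredients --- welded invariance of $\vlk$ (Proposition \ref{prop_vlk}), opposite-curl cancellation, and Lemma \ref{lemma_parallels_wd} --- are already established.
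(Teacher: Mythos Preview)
Your two-step outline is sound and closely parallels the paper's argument, but your second step is built on a misreading of the definition of \emph{virtual ribbon}. The paper requires that the only singularities of the immersed annulus are full classical or virtual \emph{band crossings} as in Figure~\ref{fig_band_cross}. A local band-twist is \emph{not} such a singularity: in a band crossing all four boundary strands participate, and in particular one of the four resulting crossings is a self-crossing of $K$ itself. Thus every classical band crossing of a virtual ribbon over a fixed diagram $D$ corresponds to a classical crossing of $D$, and conversely. This is exactly why the lemma preceding Lemma~\ref{lemma_parallels_wd} gives $\vlk(K,K')=\text{writhe}(K)$ for \emph{every} parallel $K'$: there is no integer's worth of freedom from ``inserted band-twists''. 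Over a fixed diagram the parallel is essentially unique (up to the side of the push-off), so the ``main obstacle'' you flag dissolves.

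The paper's proof exploits this directly and is correspondingly shorter: add $\Omega 1$ curls so that $\text{writhe}(K)=\text{writhe}(K^*)=0$; take the (essentially unique) thickening parallel, which is then automatically a longitude; and transport it by Lemma~\ref{lemma_parallels_wd}, using invariance of $\vlk$ to see that the result is again a longitude. Your step~1 already contains this argument. Your step~2, once you drop the spurious twist parameter, reduces to the observation that the two push-off sides give equivalent $2$-component links, which is immediate. So your proposal is not wrong, but the classification you ask for is both simpler than you suggest and already implicit in the earlier lemma.
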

\begin{proof} Let $L$ be an arbitrary virtual link diagram and suppose $L^*$ is another diagram with $L \leftrightharpoons L^*$. Using $\Omega 1$ moves, it may be ensured that each component of $L$ and $L^*$ has zero writhe. A virtual ribbon $R$ may be obtained by thickening each component $K$ of $L$. The boundary of $R$ is $K \cup \ell$, where $\ell$ is a longitude of $K$. By Lemma \ref{lemma_parallels_wd}, the parallel $\ell$ of $K$ is transported to a parallel $\ell^*$ of the corresponding component $K^*$ of $L$. Since the virtual linking number is preserved by Reidemeister and detour moves, this parallel is also a longitude.
\end{proof}

Let $K$ be a component of a virtual link diagram $L$. A meridian-parallel word pair in $G(L)$ can be recovered from a parallel $K'$ of $K$ as in the case of classical links. Choose an initial arc to travel along $K'$ which is parallel to some arc $a$ of $K$. While traversing the parallel, if an arc labeled with the generator $z$ of $G(L)$ crosses over $K'$, write the letter $z^{\pm 1}$ according to whether the crossing sign is $\pm$, respectively. The resulting word $\lambda$ is a parallel and $(a,\lambda)$ is a meridian-parallel word pair for the conjugacy class of generators of $G(L)$ corresponding to the arcs of $K$.

\begin{lemma} \label{lemma_meridian_longitude} The following hold for a meridian-longitude word pair $(a,\lambda)$. 
\begin{enumerate}
\item The exponent sum in $\lambda$ of the generators of $G(L)$ in the conjugacy class of $a$ is $0$.
\item The pair $(a,\lambda)$ is well-defined up to simultaneous conjugation in $G(L)$. 
\end{enumerate}
\end{lemma}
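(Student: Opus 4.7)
The plan is to extract both statements directly from the diagrammatic construction of $(a,\lambda)$. Write $K$ for the component of $L$ containing $a$, and $\ell$ for a longitude from which $\lambda$ is read off. The key observation for part (1) is that, in any Wirtinger-type presentation, two generators lie in the same conjugacy class if and only if their corresponding arcs belong to the same component of $L$: at every classical crossing the Wirtinger relation conjugates only the undercrossing generator by the overstrand, so conjugacy classes of generators match components of $L$. Hence the letters of $\lambda$ drawn from the conjugacy class of $a$ are exactly those recorded when an arc of $K$ itself overcrosses $\ell$, with sign equal to the crossing sign. Their signed exponent sum is therefore $\vlk(K,\ell)$, which equals $0$ by definition of a longitude.

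For part (2), I would handle two sources of ambiguity in turn. First, the choice of starting arc $a$ (equivalently, the basepoint on $\ell$). Suppose the initial arc is moved along $K$ from $a$ to $b$, passing through a sequence of undercrossings. At each such undercrossing, say under an arc $z$ with crossing sign $\epsilon$, the Wirtinger relation replaces the meridian label by $z^{-\epsilon}(\cdot)z^{\epsilon}$; one checks directly against the two crossing cases in Figure \ref{fig_group_rels} that the sign convention is consistent between them. Simultaneously, the segment of $\ell$ parallel to this piece of $K$ passes under the same arc $z$, so the letter $z^{\epsilon}$ is cyclically rotated off one end of $\lambda$. Tracking this step by step, if $u$ is the cumulative word built from the recorded letters, then $b=u^{-1}au$ in $G(L)$ and the new parallel word is $u^{-1}\lambda u$, exhibiting simultaneous conjugation.

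Second, the choice of virtual ribbon, i.e.\ the particular parallel $\ell$ among all longitudes of $K$. By Lemma \ref{lemma_parallels_wd}, any two longitudes of $K$ are related by a sequence of Reidemeister and detour moves applied to the virtual ribbon (with pairs of canceling twists inserted where necessary so that the writhe condition of that lemma holds). Each such move on the ribbon either leaves the parallel word unchanged or replaces it by a Wirtinger conjugate, while preserving the meridian class of $a$ in $G(L)$; these changes are again absorbed into simultaneous conjugation of $(a,\lambda)$. Together with the basepoint ambiguity, this proves the claim.

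The main obstacle I anticipate is the bookkeeping in the first half of (2): verifying that the cumulative word conjugating $a$ into $b$ coincides \emph{letter-for-letter and sign-for-sign} with the word cyclically rotated off $\lambda$. Once the two crossing cases of Figure \ref{fig_group_rels} are matched against the sign convention for writing letters of $\lambda$, this reduces to a direct diagram inspection, and the rest of the argument is then immediate.
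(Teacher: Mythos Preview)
Your argument for part (1) is essentially the paper's: the letters from the conjugacy class of $a$ record exactly the overcrossings of $K$ with $\ell$, and their signed count is $\vlk(K,\ell)=0$.

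For part (2) you take a genuinely different route. You track the basepoint change along $K$ step by step, showing that moving past one undercrossing simultaneously conjugates both $a$ and $\lambda$ by the overcrossing letter; this is correct and transparent. The paper instead proves a slightly different statement: for \emph{any} generator $z$ (not just one adjacent to $a$), the conjugated pair $(za\bar z,\, z\lambda\bar z)$ is itself a meridian-longitude pair. It does this with a single diagram trick, using a detour and one $\Omega 2$ move to drag the arc $z$ across the basepoint, creating two new arcs $x,a^*$ with relations $a=\bar z x z$, $x=z a^*\bar z$; reading the longitude from $x$ gives $z\lambda\bar z$ directly. Your approach makes the dependence on the basepoint explicit and is closer in spirit to the classical treatment; the paper's trick is shorter and bypasses the inductive bookkeeping you flag as the main obstacle. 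Your second source of ambiguity (choice of ribbon) is largely unnecessary here: the Theorem immediately preceding this Lemma already establishes that longitudes are well-defined up to equivalence, so once the diagram is fixed the only remaining freedom is the basepoint, and your invocation of Lemma~\ref{lemma_parallels_wd} for this purpose is a bit indirect.
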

\begin{proof} The first claim follows from the fact that $\vlk(K,K')=0$. For the second claim, it is sufficient to show that the pair is well-defined up to conjugation by a single generator $z$ of $G(L)$. We will show that $(z a \bar{z},z \lambda \bar{z})$ is also a meridian-longitude pair for a link $L^*$ equivalent to $L$. Using detour moves and a single $\Omega 2$ move, the arc $z$ may be positioned so that it appears as in Figure \ref{fig_simultaneous_conjugation}, right. The new link $L^*$ is equivalent to $L$ and $G(L^*)$ has a presentation with two more generators $x,a^*$ and two more relations:
\[
a= \bar{z} x z,\,\,\, x = z a^* \bar{z}
\]  
Substituting the second relation into the first gives $a^*=a$ and $x=z a \bar{z}$. If a longitude word $\lambda^*$ is read off starting from $x$, we have $\lambda^*=z \lambda \bar{z}$. Since this does not affect the exponent sum of generators conjugate to $a$, $(z a \bar{z},z \lambda \bar{z})$ is also a meridian-longitude word pair. 
\end{proof}

\begin{figure}[htb]
\centerline{
\begin{tabular}{c} 
 \def\svgwidth{4.5in}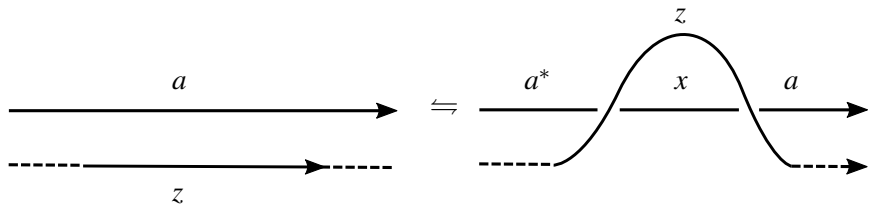
\end{tabular}
}
\caption{Meridian-longitude word pairs are well-defined up to simultaneous conjugation.} \label{fig_simultaneous_conjugation}
\end{figure}
 
Next we describe the correspondence between longitudes of virtual links and longitudes of ribbon torus links. First recall the isomorphism $G(L) \to \pi_1(S^4 \smallsetminus \text{Tube}(L))$ where $L=K_1 \cup \cdots \cup K_m$ is a virtual link diagram (see Audoux et al. \cite{abmw}). The broken surface diagram of $\text{Tube}(L)$ has a well-defined notion of \emph{inner annuli} and \emph{outer annuli}, as in Figure \ref{fig_m_alpha}, left. Fix a base point $b$ in the exterior of $\text{Tube}(L)$. A \emph{meridian} $m_{\alpha}$ of an outer annulus $\alpha$ is a closed path from $b$ that encircles once a point in $\partial \alpha$ (see Figure \ref{fig_m_alpha}, right). The arcs of $L$ are in one-to-one correspondence with the outer annuli of $\text{Tube}(L)$. The classical crossings of $L$ are in one-to-one correspondence with the inner annuli of $\text{Tube}(L)$.  There is a presentation for $\pi_1(S^4\smallsetminus \text{Tube}(L),b)$ whose generators are the meridians of the outer annuli and whose relations are of the form $m_{\delta}=m_{\beta}^{-\varepsilon} m_{\alpha} m_{\beta}^{\varepsilon}$. The map $a \to m_{\alpha}$ is an isomorphism $G(L) \cong \pi_1(S^4 \smallsetminus \text{Tube}(L),b)$ (\cite{abmw}, Propositions 2.20 and 3.13). Note that the abelianization of $\pi_1(S^4 \smallsetminus \text{Tube}(L),b)$ is $H_1(S^4 \smallsetminus \text{Tube}(L))\cong \mathbb{Z}^m$, where the $i$-th factor of $\mathbb{Z}$ is generated by a meridian of $\text{Tube}(K_i)$. Given a simple closed curve $\gamma$ in $S^4 \smallsetminus \text{Tube}(L)$, the $i$-th coordinate of $[\gamma] \in H_1(S^4 \smallsetminus \text{Tube}(L))$ is the linking number of $\gamma$ with $\text{Tube}(K_i)$.

\begin{figure}[htb]
\centerline{
\begin{tabular}{c}
\def\svgwidth{3.75in}\input{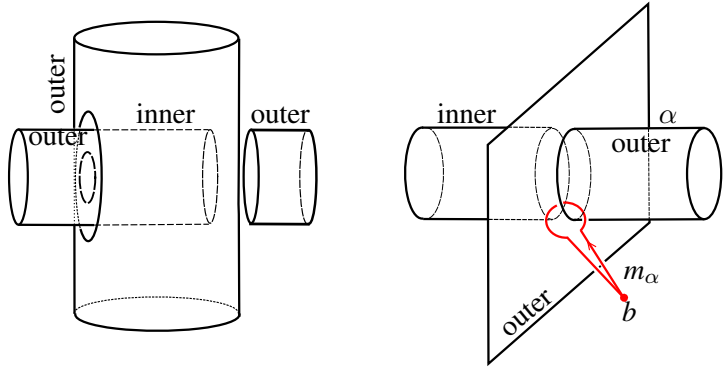}
\end{tabular}
}
\caption{A meridian $m_{\alpha}$ of an outer annulus $\alpha$.} \label{fig_m_alpha}
\end{figure}

A parallel $\gamma$ of $\text{Tube}(K_i)$ may be drawn directly on the broken surface diagram as follows. Choose a point $P$ on some annulus $\alpha$ of the broken surface diagram. Travel along $\text{Tube}(K_i)$ in the direction of $K_i$ and return back to $P$. Let $N=N(\text{Tube}(K_i))\approx S^1 \times S^1 \times B^2$ be a tubular neighborhood of $\text{Tube}(K_i)$ and push $\gamma$ off $\text{Tube}(K_i)$ into $\partial N$. The resulting curve is a \emph{parallel} of $\text{Tube}(K_i)$. Note that $H_1(\partial N)\cong H_1(S^1 \times S^1 \times S^1) \cong \mathbb{Z}\oplus \mathbb{Z} \oplus \mathbb{Z}$. For a parallel $\gamma$, the third integer coordinate of $[\gamma]\in H_1(\partial N)$ is the linking number of $\gamma$ with $\text{Tube}(K_i)$. If two parallels differ in homology only in the second coordinate, then they are isotopic in the exterior of $\text{Tube}(K_i)$. A \emph{longitude}\footnote{In \cite{abmw}, Audoux et al. call this the \emph{preferred longitude}.} of $\text{Tube}(K_i)$ is parallel having zero linking number with $\text{Tube}(K_i)$.

\begin{lemma} \label{lemma_tube_homotopy} Let $L=K_1 \cup \cdots \cup K_m$ be a virtual link diagram, $\ell_i$ be a longitude of $K_i$, and $(a_i,\lambda_i)$ some meridian-longitude word pair for $\ell_i$. Then the isomorphism $G(L) \to \pi_1(S^4\smallsetminus \text{Tube}(L),b)$ sends $\lambda_i$ to the homotopy class of a longitude of $\text{Tube}(K_i)$.
\end{lemma}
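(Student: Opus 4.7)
The plan is to geometrically realize the word $\lambda_i$ as a specific parallel curve $\gamma_i$ of $\text{Tube}(K_i)$ sitting on the boundary of a tubular neighborhood, and then to use Lemma~\ref{lemma_meridian_longitude}(1) to verify that $\gamma_i$ has zero linking number with $\text{Tube}(K_i)$.

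First I would start from a virtual ribbon $R_i$ whose boundary is $K_i \cup \ell_i$ (such a ribbon exists by the construction preceding Definition~\ref{defn_long}) and apply the Tube map band-by-band to produce an embedded annulus $A_i \subset S^4$. At a virtual crossing of bands the two resulting tubes pass disjointly (by the convention of Figure~\ref{fig_tube}, right), and at a classical crossing the Tube map yields the standard broken-surface configuration; in both cases the boundary component of $A_i$ parallel to $\text{Tube}(K_i)$ is a simple closed curve $\gamma_i$ lying on $\partial N(\text{Tube}(K_i))$, and $A_i$ itself lies in $S^4 \smallsetminus \text{Tube}(L)$ except along $\text{Tube}(K_i)$. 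This $\gamma_i$ is a parallel of $\text{Tube}(K_i)$ in the sense explained just before the statement of the lemma.

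Next I would identify $[\gamma_i] \in \pi_1(S^4 \smallsetminus \text{Tube}(L), b)$ with the image of $\lambda_i$ under the isomorphism $a \mapsto m_\alpha$ of Theorem~\ref{thm_satoh}. Let $z$ be an arc of $L$ passing over $\ell_i$ with sign $\varepsilon$ in the plane. By construction of the virtual ribbon and the Tube map, $\gamma_i$ passes under $\text{Tube}(z)$ (in the $x_4$-direction) at the corresponding classical crossing, so circumnavigating $\text{Tube}(z)$ once in the appropriate direction contributes $m_{\alpha(z)}^{\varepsilon}$ to the homotopy class of $\gamma_i$. Reading along $\gamma_i$ from a fixed arc parallel to $a_i$ and joining to the basepoint $b$ by a fixed path yields the ordered product of meridians $m_{\alpha(z)}^{\pm 1}$ in exactly the same order and with the same signs as the letters of $\lambda_i$. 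Hence $[\gamma_i]$ is the image of $\lambda_i$.

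Finally, to conclude that $\gamma_i$ is a longitude rather than a general parallel, I would abelianize. Under $\pi_1(S^4 \smallsetminus \text{Tube}(L)) \to H_1(S^4 \smallsetminus \text{Tube}(L)) \cong \mathbb{Z}^m$, the $i$-th coordinate of the image of $[\gamma_i]$ is the linking number of $\gamma_i$ with $\text{Tube}(K_i)$, and equals the exponent sum in $\lambda_i$ of generators in the conjugacy class of $a_i$. This exponent sum vanishes by Lemma~\ref{lemma_meridian_longitude}(1), so $\gamma_i$ is a longitude of $\text{Tube}(K_i)$. The indeterminacy of $(a_i,\lambda_i)$ up to simultaneous conjugation (Lemma~\ref{lemma_meridian_longitude}(2)) corresponds to changing the basepoint path for $\gamma_i$, which affects $[\gamma_i]$ only by conjugation and hence preserves the homotopy class of $\gamma_i$ as a free loop. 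The main obstacle is the careful bookkeeping in the middle step: verifying that the Tube construction applied to a virtual ribbon yields precisely the meridian word $\lambda_i$ when read off along $\gamma_i$, consistently at both classical and virtual crossings of bands.
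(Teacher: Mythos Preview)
Your argument is correct in outline and matches the paper's three steps: realize $\lambda_i$ as a parallel $\gamma_i$ of $\text{Tube}(K_i)$, identify its homotopy class by reading meridians at undercrossings, and check that the linking number vanishes. The paper streamlines two points. First, rather than tubing the virtual ribbon (which is not literally in the domain of the Tube map and would need unpacking), it constructs $\gamma_i$ directly on the broken surface diagram by tracing along $\text{Tube}(K_i)$ and pushing off into $\partial N$; this makes the meridian-word identification immediate and sidesteps the bookkeeping you flag as the main obstacle. Second, for the linking number the paper first normalizes to $\text{writhe}(K_i)=0$ via $\Omega 1$ moves, from which $\text{lk}(\gamma_i,\text{Tube}(K_i))=0$ follows at once; your appeal to Lemma~\ref{lemma_meridian_longitude}(1) is an equally valid alternative.
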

\begin{proof} Using $\Omega 1$ moves, we may assume $\text{writhe}(K_i)=0$. The isomorphism $G(L) \to \pi_1(S^4\smallsetminus \text{Tube}(L),b)$ sends $a \to m_{\alpha}$, where $a$ is an arc of $L$ and $\alpha$ is the corresponding annulus in $\text{Tube}(L)$. Then $\lambda_i$ is mapped to the product of meridians in $\text{Tube}(L)$. The merdians in this product correspond the successive undercrossings met while traversing $\text{Tube}(K_i)$ from a point on the initial annulus $\alpha_i$. Thus, $\lambda_i$ is mapped to the homotopy class of a parallel $\gamma_i$ of $\text{Tube}(K_i)$. Since $\text{writhe}(K_i)=0$, the linking number of $\gamma_i$ and $\text{Tube}(K_i)$ is zero. Hence, $\gamma_i$ is a longitude of $\text{Tube}(K_i)$.
\end{proof}

\subsection{Milnor invariants} \label{sec_mu} In this section, we define $\bar{\mu}$-invariants of multi-component virtual links and prove they are invariant under welded concordance. In the case of virtual links, the Chen-Milnor type theorem appears as follows. 

\begin{theorem} \label{cor_chen_milnor_links} Let $L=K_1 \cup \cdots \cup K_m$ be a virtual link diagram and for $1 \le i \le m$, let $(a_i,\lambda_i)$ be a meridian-longitude word pair for $K_i$ in $L$. Let $F$ be the free group on $a_1,\ldots,a_m$. Then the nilpotent quotients of $G=G(L)$ have a presentation of the form:
\[
Q_{q}(G) \cong \langle a_{1},\ldots,a_{m}| [a_{1},\phi^{(q)}(\lambda_1)],\ldots,[a_{m},\phi^{(q)}(\lambda_{ m})],F_q \rangle,
\]
\end{theorem}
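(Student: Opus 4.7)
The plan is to realize this as an immediate corollary of Theorem \ref{thm_gen_chen_milnor}. The Wirtinger-type presentation of $G(L)$ coming from the virtual link diagram naturally partitions its generators into conjugacy classes indexed by the components $K_1,\ldots,K_m$. First I would check that this presentation is serial in the sense of Definition \ref{defn_serial}, and moreover that every conjugacy class is cyclic (so $r=0$ in the notation of Theorem \ref{thm_gen_chen_milnor}).

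To do this, I would fix a component $K_i$ and travel once around $K_i$ starting from a chosen arc $c_{i,1}=a_i$. Each classical undercrossing along $K_i$ ends one arc $c_{i,j}$ of $K_i$ and starts the next arc $c_{i,j+1}$, with a Wirtinger-type relation of the form $c_{i,j+1}=\bar w_{i,j}\,c_{i,j}\,w_{i,j}$, where $w_{i,j}$ is the generator corresponding to the overcrossing arc (with appropriate exponent) from Figure \ref{fig_group_rels}. Because $K_i$ is a closed component, after $n_i$ such relations we also obtain the return relation $c_{i,1}=\bar w_{i,n_i}\,c_{i,n_i}\,w_{i,n_i}$. Virtual crossings contribute no relations, so the presentation is exactly of the serial Wirtinger form, and each $C_i$ is cyclic.

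Next I would identify the parallel words. Setting $l_{i,j}=w_{i,1}\cdots w_{i,j}$, we get the equivalent relations $c_{i,j+1}=\bar l_{i,j}\,c_{i,1}\,l_{i,j}$, and the parallel word $\lambda_i=l_{i,n_i}$ is precisely the word obtained from a diagrammatic parallel of $K_i$, as described in the paragraph preceding Lemma \ref{lemma_meridian_longitude}. By Lemma \ref{lemma_meridian_longitude}(1), after applying $\Omega 1$ moves to reduce $K_i$ to writhe zero (which does not change $Q_q(G)$ since $G(L)$ is a virtual link invariant), the parallel becomes a longitude, so $(a_i,\lambda_i)$ is a meridian-longitude word pair and $\lambda_i$ has zero exponent sum in the generators of $C_i$, i.e., it is a longitude word in the sense of Definition \ref{defn_word}.

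With serial structure and cyclic conjugacy classes verified, Theorem \ref{thm_gen_chen_milnor} applies verbatim, yielding exactly the claimed presentation
\[
Q_{q}(G) \cong \langle a_{1},\ldots,a_{m}\mid [a_{1},\phi^{(q)}(\lambda_1)],\ldots,[a_{m},\phi^{(q)}(\lambda_{m})],F_q \rangle.
\]
I do not expect any substantive obstacle here; the content is entirely a bookkeeping check that the diagrammatic meridian-longitude pairs coincide with the algebraic ones of Section \ref{sec_gen_chen_milnor}. The only mildly delicate point is the writhe-zero normalization needed to identify $\lambda_i$ as a genuine longitude word, but this is harmless because different choices of meridian-longitude word pairs differ by simultaneous conjugation (Lemma \ref{lemma_meridian_longitude}(2)), which does not alter the presentation of $Q_q(G)$ up to isomorphism.
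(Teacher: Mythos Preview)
Your proposal is correct and follows essentially the same approach as the paper: verify that the Wirtinger presentation of $G(L)$ is serial with one cyclic conjugacy class per component, normalize each $K_i$ to writhe zero via $\Omega 1$ moves so that the parallel words become longitude words, and then invoke Theorem~\ref{thm_gen_chen_milnor}. You have simply spelled out in more detail what the paper does in three sentences.
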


\begin{remark} The above result was obtained previously by Dye and Kauffman \cite{dye_kauffman} in the case of virtual link homotopy. A geometric proof for links in thickened surfaces can be found in \cite{bbc}, Theorem 3.6.
\end{remark}

\begin{proof} $G$ has a serial Wirtinger-type presentation where there is one cyclic conjugacy class of generators for each $K_i$. By adding curls (i.e. $\Omega 1$ moves) to each $K_i$, we may assume that $\text{writhe}(K_i)=0$ for all $i$. Thus it may be assumed that each parallel word is a longitude word. The result now follows from Theorem \ref{thm_gen_chen_milnor}.   
\end{proof}

Recall the \emph{Magnus expansion} of a word in the free group $F$ on $m$ letters $a_1,\ldots,a_m$. Let $Y=\{a_1,\ldots,a_m\}$, and let $\mathbb{Z}[[Y]]$ denote the ring of formal power series in the non-commuting variables $a_1,\ldots,a_m$. For a word $f \in F$, the Magnus expansion associates an element $\epsilon(f)\in\mathbb{Z}[[Y]]$ via the map $a_i \to 1+a_i$ and $a_i^{-1} \to 1-a_i+a_i^2-\ldots$. For an (unreduced) word of the form $y_{j_1} y_{j_2} \cdots y_{j_r}$, with $y_{j_k} \in Y$, let $J$ denote the sequence $j_1j_2\cdots j_r$. Then there are integers $\epsilon_J(f)$ such that : 
\[
\epsilon(f)=1+\sum_{J=j_1j_2\cdots j_r} \epsilon_J(f) a_{j_1} a_{j_2} \cdots a_{j_r}.
\]
The Magnus expansion gives necessary and sufficient conditions for determining how far an element of $F$ lies in the lower central series: $f \in F_{q+1}$ if and only if $\epsilon_J(f)=0$ for all sequence $J$ of length $|J|\le q$ (see e.g. Fenn \cite{fenn}, Corollary 4.4.5).

Let $L=K_1 \cup \cdots \cup K_m$ be an $m$-component virtual link diagram and let $G=G(L)$. Let $J=j_1j_2j_3\cdots j_s$ be a sequence of numbers from the set $\{1,\ldots,m\}$. For $j \in \{1,\ldots,m\}$ define $J|j=j_1j_2\cdots j_sj$. Choose $q>s$ and let $(a_{j},\lambda_j)$ be a meridian-longitude word pair for $K_j$ and set $\lambda_j^{(q)}=\phi^{(q)}(\lambda_j)$. We define integers $\mu_{J|j}(L)$ by:
\[
\mu_{J|j}(L)=\epsilon_J(\lambda_j^{(q)}).
\]
It follows from Lemma \ref{thm_pq_commutes} that the value $\mu_{J|j}(L)$ is independent of $q$ if $q>s$. For a sequence $J=j_1\cdots j_r$, let $\Delta_J$ denote the greatest common divisor of $\mu_{\hat{J}}(L)$, where $\hat{J}$ ranges over all sequences obtained from $J$ by deleting one or more terms and cyclically permuting all of the remaining terms. Then define:
\[
\bar{\mu}_{J}(L) \equiv \mu_J(L) \pmod{\Delta_J}.
\]
To prove that $\bar{\mu}_J$ is a welded-concordance invariant, we need the following result of Stallings.

\begin{lemma}[Stallings \cite{stallings_central}, Theorem 3.4] \label{thm_stallings} Let $h:G \to G^*$ be a homomorphism of groups $G,G^*$ such that $H_1(h):H_1(G) \to H_1(G^*)$ is an isomorphism and $H_2(h):H_2(G) \to H_2(G^*)$ is a surjection. Then $h$ induces an isomorphism on the nilpotent quotients $Q_q(G) \to Q_q(G^*)$ for all $q \ge 2$.
\end{lemma}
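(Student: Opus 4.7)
The plan is to induct on $q$, using Stallings' five-term exact sequence in integral group homology to reduce the inductive step to a diagram chase. For the base case $q = 2$, the quotient $Q_2(G) = G/[G,G]$ coincides with $H_1(G)$, so the hypothesis that $H_1(h)$ is an isomorphism immediately gives that $Q_2(h)$ is an isomorphism.

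For the inductive step, suppose $Q_q(h)$ is an isomorphism and aim to show the same for $Q_{q+1}(h)$. There is a commutative diagram of short exact sequences whose top row is $1 \to G_q/G_{q+1} \to Q_{q+1}(G) \to Q_q(G) \to 1$, whose bottom row is the counterpart for $G^*$, and whose vertical maps are induced by $h$. By the five lemma for groups, once the left vertical arrow $G_q/G_{q+1} \to G^*_q/G^*_{q+1}$ has been shown to be an isomorphism, the middle arrow $Q_{q+1}(h)$ must be an isomorphism as well.

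To analyze $G_q/G_{q+1}$, I would apply Stallings' five-term exact sequence to the extension $1 \to G_q \to G \to Q_q(G) \to 1$. Using $[G,G_q] = G_{q+1}$, this reads
\[
H_2(G) \to H_2(Q_q(G)) \to G_q/G_{q+1} \to H_1(G) \to H_1(Q_q(G)) \to 0.
\]
For $q \ge 2$ the rightmost arrow is an isomorphism, since both groups equal $G/[G,G]$, and therefore $G_q/G_{q+1}$ is identified with the cokernel of $H_2(G) \to H_2(Q_q(G))$. The analogous identification holds for $G^*$. Naturality of the five-term sequence along $h$ yields a commutative square in which $H_2(G) \to H_2(G^*)$ is surjective by hypothesis and $H_2(Q_q(G)) \to H_2(Q_q(G^*))$ is an isomorphism by the inductive hypothesis (since $Q_q(h)$ is an isomorphism of groups and hence induces one on $H_2$). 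A short diagram chase then shows that the induced map on cokernels is an isomorphism, which is exactly the needed statement about $G_q/G_{q+1} \to G^*_q/G^*_{q+1}$.

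The main obstacle in this plan is verifying naturality of the five-term exact sequence with respect to the homomorphism $h$; this is a formal consequence of the naturality of the Lyndon--Hochschild--Serre spectral sequence applied to the two vertical extensions, and is the technical heart of Stallings' original argument. Everything after that reduces to the five lemma and the cokernel diagram chase sketched above.
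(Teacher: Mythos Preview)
The paper does not prove this lemma; it merely cites Stallings' original paper. Your argument is the standard one (and essentially Stallings' own): induct on $q$, use the five-term exact sequence for the extension $1 \to G_q \to G \to Q_q(G) \to 1$ to identify $G_q/G_{q+1}$ with the cokernel of $H_2(G) \to H_2(Q_q(G))$, and then compare cokernels using the hypotheses on $H_1$ and $H_2$ together with the inductive hypothesis. The reasoning is correct; the only remark is that the cokernel step, while easy, deserves a sentence of justification: surjectivity of $H_2(G) \to H_2(G^*)$ ensures the images in $H_2(Q_q(G))$ and $H_2(Q_q(G^*))$ correspond under the isomorphism $H_2(Q_q(G)) \cong H_2(Q_q(G^*))$, so the quotients agree. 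You correctly identify naturality of the five-term sequence as the one nontrivial input, and you are right that it follows from naturality of the Lyndon--Hochschild--Serre spectral sequence.
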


\begin{lemma}\label{thm_bbc_nil_quo} Suppose $L,L^*$ are welded-concordant virtual links. Let $\Lambda$ be a concordance between $T=\text{Tube}(L)$ and $T^*=\text{Tube}(L^*)$. Then:
\[
Q_{q}(G(L)) \cong Q_{q}(\pi_1(S^4 \smallsetminus T)) \cong Q_{q}(\pi_1(S^4 \times I \smallsetminus \Lambda)) \cong Q_{q}(\pi_1(S^4 \smallsetminus T^*)) \cong Q_{q}(G(L^*)),
\]
and the isomorphisms preserve longitude words.
\end{lemma}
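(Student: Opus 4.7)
The first and fourth isomorphisms are Satoh's Theorem \ref{thm_satoh}, and by Lemma \ref{lemma_tube_homotopy} they send longitude words of $G(L)$ (respectively, $G(L^*)$) to homotopy classes of longitudes of the corresponding torus components of $T$ (respectively, $T^*$). The substance is the middle pair of isomorphisms, and my plan is to deduce both from Stallings' Lemma \ref{thm_stallings} applied to the inclusion-induced homomorphisms $i_*\colon \pi_1(S^4 \smallsetminus T)\to\pi_1((S^4\times I)\smallsetminus \Lambda)$ and $j_*\colon \pi_1(S^4\smallsetminus T^*)\to\pi_1((S^4\times I)\smallsetminus\Lambda)$, with base points chosen in a collar of the respective boundary component.

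For the $H_1$ hypothesis of Stallings' lemma, Alexander duality identifies both source and target with $\mathbb{Z}^m$, generated by meridians of the $m$ torus components. Since each component $\Lambda_i$ of $\Lambda$ is a smooth product cobordism $T^2\times I$ connecting $T_i$ to $T_i^*$, a meridian of $T_i$ is carried by inclusion to a meridian of $\Lambda_i$, so $H_1(i_*)$ and $H_1(j_*)$ are isomorphisms.

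The main obstacle is the $H_2$ surjectivity. Here I would examine the long exact sequence of the pair $(X,Y)$, where $X = (S^4\times I)\smallsetminus\Lambda$ and $Y = S^4\smallsetminus T$, after first excising the open collar of $S^4\times\{0\}$. Alexander--Lefschetz duality, applied to the tubular neighborhood of $\Lambda$, identifies the relative homology $H_*(X,Y)$ (up to a degree shift) with the relative cohomology of $(\Lambda,T^*)$. Since each $\Lambda_i \cong T^2 \times I$ deformation retracts onto $T_i^* \subset T^*$, this relative cohomology vanishes, forcing $H_2(X,Y) = 0$ and giving the required surjection (in fact isomorphism) on $H_2$. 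The analogous argument applies to $j_*$, completing the hypotheses of Stallings' lemma. This step is the higher-dimensional analogue of the Casson--Gordon--Stallings argument for classical link concordance, and the delicate point is to account correctly for the boundary behavior of the duality near the tubular neighborhood of $\Lambda$ and along the collars of $S^4\times\{0,1\}$.

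Finally, longitude preservation under the middle isomorphisms is geometric. The product structure on $\Lambda_i \cong T^2\times I$ supplies a parallel annulus which, pushed slightly off $\Lambda_i$ into its tubular neighborhood, realizes a free homotopy in $X$ from a longitude of $T_i$ to a longitude of $T_i^*$. Combined with Lemma \ref{lemma_tube_homotopy} at the two ends, the composite of the four isomorphisms therefore carries a longitude word $\lambda_i$ for $L$ to a longitude word $\lambda_i^*$ for $L^*$.
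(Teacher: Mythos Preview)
Your proposal is correct and follows the same strategy as the paper: Satoh's theorem at the two ends, Stallings' lemma for the middle isomorphisms via the inclusion-induced maps, and an isotopy through a tubular neighborhood of $\Lambda_i$ for longitude preservation. The only differences are cosmetic---the paper invokes a Mayer--Vietoris argument for the $H_1$ and $H_2$ hypotheses rather than your duality/long-exact-sequence route, and it verifies explicitly (again via Mayer--Vietoris, applied to the single component $\Lambda_i$) that the isotoped parallel $\gamma_i^*$ has zero linking number with $T_i^*$ and is therefore a longitude, a step you leave implicit in the product framing on $\Lambda_i\cong T^2\times I$.
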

\begin{proof} There are inclusions $S^4 \smallsetminus T = S^4 \times 0 \smallsetminus \Lambda \subset S^4 \times I \smallsetminus \Lambda$ and $S^4 \smallsetminus T^* = S^4 \times 1 \smallsetminus \Lambda\subset S^4 \times I \smallsetminus \Lambda$. By a Mayer-Vietoris argument, there are isomorphisms $H_i(S^4 \smallsetminus T) \cong H_i(S^4 \times I \smallsetminus \Lambda) \cong H_i(S^4 \smallsetminus T^*)$ for $i=1,2$. The first claim then follows from Lemma \ref{thm_stallings} and Theorem \ref{thm_satoh}. 

By Lemma \ref{lemma_tube_homotopy}, longitude words will be preserved if the isomorphisms $Q_{q}(\pi_1(S^4 \smallsetminus T)) \cong Q_{q}(\pi_1(S^4 \times I \smallsetminus \Lambda)) \cong Q_{q}(\pi_1(S^4 \smallsetminus T^*))$ preserve longitudes. Let $T_i,T_i^*$ be corresponding components of $T,T^*$ and let $\Lambda_i\approx S^1 \times S^1 \times I$ be the component of $\Lambda$ giving a concordance between them. Let $\gamma_i$ be a longitude of $T_i$ and $N(\Lambda_i)\approx \Lambda_i \times B^2$ be a closed tubular neighborhood of $\Lambda_i$. Observe that there is an isotopy taking $\gamma_i$ to some parallel $\gamma_i^*$ of $T_i^*$ in $N(\Lambda_i)\smallsetminus \Lambda_i$. Again using a Mayer-Vietoris argument, $H_1(S^4 \smallsetminus T_i) \cong H_1(S^4 \times I \smallsetminus \Lambda_i) \cong H_1(S^4 \smallsetminus T_i^*)$. Since $\gamma_i$ is a longitude of $T_i$, $[\gamma_i]=0 \in H_1(S^4 \smallsetminus T_i)$. Hence, $[\gamma_i^*]=0 \in H_1(S^4 \smallsetminus T_i^*)$ and $\gamma_i^*$ is a longitude of $T_i^*$.
\end{proof}

\begin{theorem}\label{thm_mubar_links} If $L, L^*$ are welded-concordant virtual links, then for all sequences $J$ with $|J|\ge 2$:
\[
\bar{\mu}_{J}(L) \equiv \bar{\mu}_J(L^*) \pmod{\Delta_J}.
\]
\end{theorem}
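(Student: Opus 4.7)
The plan is to deduce the theorem from Lemma \ref{thm_bbc_nil_quo}, the Chen-Milnor presentation in Theorem \ref{cor_chen_milnor_links}, and Milnor's classical indeterminacy calculation, suitably adapted to the welded setting.

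First, for each $q\ge 2$, Lemma \ref{thm_bbc_nil_quo} supplies an isomorphism
\[
\psi_q\colon Q_q(G(L)) \stackrel{\cong}{\longrightarrow} Q_q(G(L^*))
\]
induced by inclusions into the concordance exterior $S^4 \times I \smallsetminus \Lambda$, and this isomorphism preserves longitude words. The meridian $a_i$ of $K_i$ and the meridian $a_i^*$ of $K_i^*$ both represent meridians of the single connected annulus $\Lambda_i$, and meridians of a connected component of $\Lambda$ are mutually conjugate in $\pi_1(S^4\times I \smallsetminus \Lambda)$. By Lemma \ref{lemma_meridian_longitude}, the meridian--longitude pair $(a_i,\lambda_i)$ is moreover well-defined only up to \emph{simultaneous} conjugation. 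Hence there exist words $w_i \in Q_q(G(L^*))$ with
\[
\psi_q(a_i) \equiv \overline{w}_i\, a_i^*\, w_i \quad\text{and}\quad \psi_q(\lambda_i) \equiv \overline{w}_i\, \lambda_i^*\, w_i \pmod{G(L^*)_q}.
\]

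Second, lift this to the free group $F$ on $a_1^*,\ldots,a_m^*$ via the Chen-Milnor presentation. Writing $\lambda_j^{(q)}=\phi^{(q)}(\lambda_j)$ and $\lambda_j^{*(q)}=\phi^{(q)}(\lambda_j^*)$, and rewriting $\lambda_j^{(q)}$ in the generators $a_i^*$ through the substitution $a_i \mapsto \overline{w}_i a_i^* w_i$, the previous paragraph shows that the resulting word coincides with $\overline{w}_j\, \lambda_j^{*(q)}\, w_j$ modulo the normal closure of $\{[a_i^*,\lambda_i^{*(q)}]\}_{i=1}^m \cup F_q$, which is exactly the relator ideal of $Q_q(G(L^*))$ supplied by Theorem \ref{cor_chen_milnor_links}.

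Third, I would apply Milnor's indeterminacy computation (see Milnor \cite{milnor}, proof of Theorem 4) to read off the effect on Magnus coefficients. The two key facts are: (i) the simultaneous conjugation $\lambda_j^{*(q)} \mapsto \overline{w}_j \lambda_j^{*(q)} w_j$ changes $\epsilon_J(\lambda_j^{*(q)})$ only by a $\mathbb{Z}$-combination of coefficients $\epsilon_{J'}(\lambda_j^{*(q)})$ indexed by cyclic permutations $J'$ of proper subsequences of $J$; and (ii) inserting a factor $[a_i^*,\lambda_i^{*(q)}]^{\pm 1}$ into $\lambda_j^{*(q)}$ contributes to $\epsilon_J(\lambda_j^{*(q)})$ only $\mathbb{Z}$-combinations of coefficients $\epsilon_{J''}(\lambda_i^{*(q)})$ with $J''$ a cyclic permutation of a proper subsequence of $J$. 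By the very definition of $\Delta_J$, every such contribution lies in $\Delta_J \mathbb{Z}$, giving $\mu_J(L) \equiv \mu_J(L^*) \pmod{\Delta_J}$ and hence $\bar{\mu}_J(L) \equiv \bar{\mu}_J(L^*) \pmod{\Delta_J}$.

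The main obstacle is the bookkeeping in step three: (i) and (ii) are Milnor's original calculations, and they must be verified to carry over unchanged when the meridians and longitudes are furnished algebraically by $G(L)$ rather than by a classical link complement. The geometric inputs needed for Milnor's argument, namely the Stallings-type isomorphism on nilpotent quotients and the preservation of longitude conjugacy classes under concordance, have already been packaged algebraically by Lemma \ref{thm_bbc_nil_quo}; once these are in hand, the remaining verification is formally identical to the classical setting.
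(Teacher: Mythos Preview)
Your proposal is correct and follows essentially the same route as the paper: invoke Lemma \ref{thm_bbc_nil_quo} for the longitude-preserving isomorphism of nilpotent quotients, use Lemma \ref{lemma_meridian_longitude} and Theorem \ref{cor_chen_milnor_links} to reduce to the effect of (a) conjugating a meridian, (b) conjugating a longitude, (c) multiplying a longitude by a relator $g^{-1}[a_j,\phi^{(q)}(\lambda_j)]g$, and (d) multiplying by an element of $F_q$, and then appeal to Milnor's classical indeterminacy computation. One small correction: the relevant reference in Milnor \cite{milnor} is Theorem 5 (the invariance/indeterminacy theorem), not Theorem 4 (which is the Chen--Milnor presentation).
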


\begin{proof} Let $G=G(L)$ and $G^*=G(L^*)$. Since $L,L^*$ are welded concordant, Lemma \ref{thm_bbc_nil_quo} implies that $Q_q(G) \cong Q_q(G^*)$ and that the isomorphism maps a longitude word for each component of $L$ to a longitude word of the corresponding component of $L^*$. By Lemma \ref{lemma_meridian_longitude}, the meridian-longitude word pairs of $L,L^*$ are well-defined up to simultaneous conjugation in $G,G^*$, respectively. Theorem \ref{cor_chen_milnor_links} therefore implies it is sufficient to prove that the residue classes $\bar{\mu}_J$ are preserved if (1) $a_j$ is replaced with a conjugate for some $1 \le j \le m$, (2) $\phi^{(q)}(\lambda_i)$ is replaced with a conjugate, (3) $\phi^{(q)}(\lambda_i)$ is multiplied by an element of the form $g^{-1}[a_j,\phi^{(q)}(\lambda_j)]g$, and (4) $\phi^{(q)}(\lambda_i)$ is multiplied by an element of $F_{q}$. These facts follow exactly as Milnor \cite{milnor}, Theorem 5, and hence their proofs are omitted (see also Fenn \cite{fenn}, Theorem 5.8.1). 
\end{proof}

\subsection{Extended Milnor invariants} \label{sec_wbar_zh} If $L,L^*$ are concordant virtual links, then $\Zh(L),\Zh(L^*)$ are semi-welded concordant. Pre-composing the $\bar{\mu}$-invariants of $(m+1)$-component links with $\Zh$ then gives a new family of concordance invariants of $m$-component links.

\begin{definition}[Extended $\bar{\mu}$-invariants] Let $L$ be an virtual link diagram. The \emph{extended $\bar{\mu}$-invariants of $L$} are the $\bar{\mu}$-invariants of $\Zh(L)$. 
\end{definition}

For computational purposes, it is convenient to relate these directly to the nilpotent quotients of extended groups. Theorem \ref{thm_gen_chen_milnor} implies a Chen-Milnor type theorem for extended groups. This will be phrased in terms of extended longitude words.

\begin{definition}[Extended meridian-longitude word] Let $L=K_1 \cup \cdots \cup K_m$ be a virtual link diagram. A meridian-longitude word $(a_i,\widetilde{\lambda}_i)$ for the component $K_i$ of $\Zh(L)$ in $G(\Zh(L))\cong \widetilde{G}(L)$ is called an \emph{extended meridian-longitude word pair} and $\widetilde{\lambda}_i$ is called an \emph{extended longitude word}.
\end{definition}

\begin{theorem} Let $L=K_1 \cup \cdots \cup K_m$ be an $m$-component virtual link diagram and $\widetilde{G}=\widetilde{G}(L)$. For $1 \le i \le m$, let $(a_i,\widetilde{\lambda}_i)$ be an extended meridian-longitude word pair for $K_i$. Let $F=F(m+1)$ be the free group on $a_1,\ldots,a_m,v$. Then the nilpotent quotients of $\widetilde{G}$ are given by: 
\[
Q_{q}(\widetilde{G}) \cong \langle a_{1},\ldots,a_{m},v| [a_{1},\phi^{(q)}(\widetilde{\lambda}_1)],\ldots,[a_{m},\phi^{(q)}(\widetilde{\lambda}_{ m})],F_{q} \rangle.
\]
\end{theorem}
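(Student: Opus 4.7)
The plan is to reduce the statement to the Chen--Milnor theorem for virtual links (Theorem \ref{cor_chen_milnor_links}) via the identification $\widetilde{G}(L) \cong G(\Zh(L))$ supplied by Theorem \ref{thm_zh}. Since $\Zh(L)$ is an $(m+1)$-component virtual link with components $K_1,\ldots,K_m,\omega$, Theorem \ref{cor_chen_milnor_links} gives
\[
Q_q\bigl(G(\Zh(L))\bigr) \cong \bigl\langle a_1,\ldots,a_m,v \,\bigm|\, [a_1,\phi^{(q)}(\widetilde{\lambda}_1)],\ldots,[a_m,\phi^{(q)}(\widetilde{\lambda}_m)],\,[v,\phi^{(q)}(\widetilde{\lambda}_\omega)],\,F_q \bigr\rangle,
\]
where $(a_i,\widetilde{\lambda}_i)$ is a meridian--longitude word pair for $K_i$ in $\Zh(L)$ and $(v,\widetilde{\lambda}_\omega)$ is a meridian--longitude word pair for $\omega$. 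By Theorem \ref{thm_zh}, $v$ corresponds to the auxiliary variable in the extended group and the pairs $(a_i,\widetilde{\lambda}_i)$ are, by definition, extended meridian--longitude word pairs for $L$.

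The crux of the argument is showing that the $\omega$ relation is vacuous. The key observation comes from the construction of the $\Zh$ map: at each classical crossing of $L$ the $\omega$-arc is drawn to pass \emph{over} both strands, and these arcs are subsequently connected by detour moves, i.e.\ through virtual crossings only. Consequently $\omega$ has no classical undercrossings (with either $L$ or itself) and no classical self-crossings. Since $\mathrm{writhe}(\omega)=0$, any parallel of $\omega$ is automatically a longitude; and since a parallel can be pushed slightly off $\omega$ without introducing classical undercrossings, the word read along such a parallel is empty. Hence $\widetilde{\lambda}_\omega=e$ in $F(m+1)$, and therefore $[v,\phi^{(q)}(\widetilde{\lambda}_\omega)]=[v,e]=e$, which may be discarded from the presentation. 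This yields exactly the presentation in the statement.

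The only real technical point, and thus the place to be careful, is verifying the triviality of $\widetilde{\lambda}_\omega$: one must check that all the connections among the $\omega$-arcs produced by the detour moves used in the definition of $\Zh$ genuinely avoid classical crossings, and that the parallel used to read off $\widetilde{\lambda}_\omega$ inherits this property. This is immediate from the description of the $\Zh$ map as recalled in Section \ref{sec_virt_weld_ribbon}, but it is the step where the geometry of $\Zh$ is being leveraged in a nontrivial way. All other ingredients---the Wirtinger-type presentation for $G(\Zh(L))$, the well-definedness of $(a_i,\widetilde{\lambda}_i)$ up to simultaneous conjugation (Lemma \ref{lemma_meridian_longitude}), and the passage through $\phi^{(q)}$---carry over without modification from the proof of Theorem \ref{cor_chen_milnor_links}.
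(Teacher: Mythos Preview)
Your proof is correct and essentially matches the paper's argument. The only cosmetic difference is that the paper applies the general Chen--Milnor theorem (Theorem~\ref{thm_gen_chen_milnor}) directly---observing that since $\omega$ has only overcrossings, the conjugacy class of $v$ in the Wirtinger presentation of $G(\Zh(L))$ consists of a single generator with no relations (i.e.\ is trivially cyclic), so no return relation for $\omega$ ever appears---whereas you route through the already-specialized Theorem~\ref{cor_chen_milnor_links} and then discard the vacuous $[v,\phi^{(q)}(\widetilde{\lambda}_\omega)]$ relation afterward. Both rest on the same observation about $\omega$ having no classical undercrossings.
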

\begin{proof} The $\omega$ component of $\Zh(L)$ has only overcrossings with the components of $L$. This implies that the conjugacy class of the generator $v$ has one element and no relations. The only nontrivial return relations (see Definition \ref{defn_serial}) are thus for the components of $L$ in $\Zh(L)$. The result then follows from Theorem \ref{thm_gen_chen_milnor}. 
\end{proof}

An immediate consequence of the above theorem is that $\mu_{J|(m+1)}(\Zh(L))=0$ for all $J$. In general, the nilpotent quotients of $\widetilde{G}(L)$ are stronger concordance invariants than those of $G(L)$. In the case of a virtual knot $K$, $Q_q(G(K)) \cong \langle a|\rangle$ for all $q \ge 2$ while $Q_q(\widetilde{G}(K)) \cong \langle a,v| [a,\widetilde{\lambda}],F(2)_q \rangle$. Hence, the nilpotent quotients can be highly nontrivial even for virtual knots. This will be apparent from the calculations in Section \ref{sec_slice_obstructions}. As there is only one extended longitude of interest in the case of virtual knots, we make the following definition.

\begin{definition}[$\overline{\zh}$-invariants of virtual knots] Let $K$ be a virtual knot diagram and  let $J$ be a sequence in the integers $\{1,2\}$. Choose $q > |J|$ and  $\widetilde{\lambda}^{(q)}=\phi^{(q)}(\widetilde{\lambda})$, where $(a,\widetilde{\lambda})$ is an extended meridian-longitude word pair. Define:
\begin{align*}
\overline{\zh}_J(K) &\equiv \epsilon_J(\widetilde{\lambda}^{(q)}) \equiv \mu_{J|1}\left(\Zh(K) \right) \pmod{\Delta_{J|1}}
\end{align*} 
Note that the terminal entry of the sequence defining the $\overline{\zh}$-invariants is not used here to specify the longitude to which the Magnus expansion is applied. The collection of the invariants $\overline{\zh}_J(K)$ will be called the \emph{$\overline{\zh}$-invariants} of $K$.
\end{definition}

\section{Artin $\&$ extended Artin representations of virtual string links} \label{sec_artin}

Following Habegger-Lin \cite{HL2}, we obtain concordance invariants for virtual string links from the Artin representation of $\vec{L}$ (Section \ref{sec_artin_links}). Using the $\Zh$ map, we obtain new concordance invariants of virtual string links and long virtual knots. These are the Artin representations of extended groups (Section \ref{sec_ex_artin}). Section \ref{sec_vsl} gives some background on virtual string links.    

\subsection{Virtual string links, $\Zh$, and Tube} \label{sec_vsl} An $m$-component \emph{virtual string link} $\vec{L}$ is a proper immersion of $m$ closed intervals $I=[0,1]$ into the rectangle $[0,1] \times [0,m+1]$, $\bigsqcup_{i=1}^{m}I \to [0,1] \times [0,m+1]$, so that the $i$-th copy of $I$ satisfies $0\to (0,i)$ and $1 \to (1,i)$. The transversal double points are each marked as either a classical or a virtual crossing. Each component is oriented from left to right. Two virtual string links are said to be \emph{equivalent} if they may be obtained from one another by a sequence of Reidemeister moves and detours that fix the endpoints of the intervals. Welded and semi-welded equivalence of virtual string links are defined similarly (see Definition \ref{defn_semi}). The virtual string link defined by $t_i \to (t_i,i)$ for $t_i$ in the $i$-th copy of $I$ is called the \emph{trivial string link}. A $1$-component virtual string link is called a \emph{long virtual knot}. 

Two $1$-component virtual string links $\vec{L}$ and $\vec{L}^*$ are said to be \emph{concordant} if there is a finite sequence of Reidemeister moves, detours, births, deaths, and saddles from $\vec{L}$ to $\vec{L}^*$ such that Equation \eqref{eqn_euler} is satisfied. Two $m$-component virtual string links are said to be \emph{concordant} if there is a cobordism that restricts to a concordance on each component. \emph{Welded concordance} and \emph{semi-welded concordance} of virtual string links are defined analogously (see Definition \ref{defn_semi}).

\begin{figure}[htb]
\centerline{
\begin{tabular}{c}
\def\svgwidth{5in}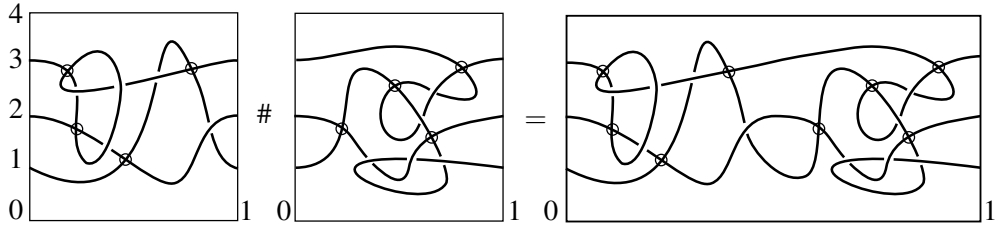
\end{tabular}
}
\caption{The concatenation of two virtual string links.} \label{fig_string_link_sum}
\end{figure}

The concordance classes of $m$-component virtual string links have a group structure defined by the concatenation operation. The concatenation $\vec{L}\#\vec{L}^*$ is formed by placing $\vec{L}^*$ to the right of $\vec{L}$ and scaling the $x$-coordinate by a factor of $1/2$ (see Figure \ref{fig_string_link_sum}). The inverse of $\vec{L}$ is found by reflecting $\vec{L}$ along the line $x=1$ and changing the orientation of each component. A proof that this defines a group structure can be found in \cite{band_pass}, Theorem 1. For $1$-component virtual string links, we obtain the concordance group of long virtual knots, or equivalently, Turaev's group of long knots on surfaces (see \cite{turaev_cobordism}, Section 6.5).

\begin{definition}[Long virtual knot concordance group $\mathscr{VC}$] \label{defn_vc} The \emph{long virtual knot concordance group}, denoted $\mathscr{VC}$, is the set of concordance classes of long virtual knots under the concatenation relation, with identity given by the trivial long virtual knot.
\end{definition}

\begin{figure}[htb]
\centerline{
\begin{tabular}{c}
\def\svgwidth{4in}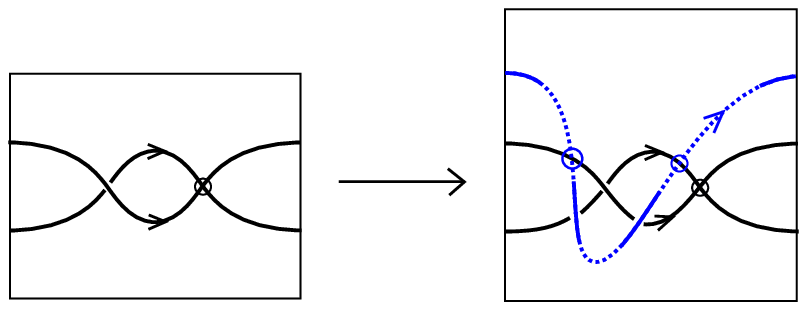
\end{tabular}
}
\caption{$\Zh$ for string links.} \label{fig_zh_string}
\end{figure}

The $\Zh$ map of an $m$-component virtual string link $\vec{L}$ is defined as follows. The first $m$-components are obtained by composing $\vec{L}$ with the inclusion $[0,1] \times [0,m+1] \hookrightarrow [0,1] \times [0,(m+1)+1]$. At each crossing of $\vec{L}$, we draw arcs of the new component $\vec{\omega}$ of $\Zh(\vec{L})$ as in Figure \ref{fig_zh_defn}. Starting at $(0,m+1) \in [0,1] \times [0,(m+1)+1]$ and ending at $(1,m+1)$, the arcs of $\vec{\omega}$ are connected together in an arbitrary order. If any new crossings are introduced, those new crossings are marked as virtual. The argument illustrated in Figure \ref{fig_zh_wd} shows that $\Zh(\vec{L})$ is well defined up to semi-welded equivalence. The same argument used to prove Theorem \ref{thm_semi_weld_conc} has the following immediate corollary.

\begin{corollary} If $\vec{L},\vec{L}^*$ are concordant virtual string links, then $\Zh(\vec{L}),\Zh(\vec{L}^*)$ are semi-welded concordant.
\end{corollary}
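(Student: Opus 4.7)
The plan is to adapt the proof of Theorem \ref{thm_semi_weld_conc} to the string link setting, which requires essentially no new ideas. A concordance from $\vec{L}$ to $\vec{L}^*$ decomposes as a finite sequence of elementary moves (Reidemeister, detour, births, deaths, saddles) performed inside $[0,1]\times[0,m+1]$, all fixing the $2m$ endpoints. I would verify move-by-move that each elementary move on $\vec{L}$ lifts to a sequence of semi-welded moves on $\Zh(\vec{L})$, possibly together with one corresponding birth, death, or saddle on the sublink $\vec{L}\subset\Zh(\vec{L})$; the composite then furnishes the desired semi-welded concordance from $\Zh(\vec{L})$ to $\Zh(\vec{L}^*)$.

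For Reidemeister and detour moves the verification is purely local and identical to the case of virtual links treated in Proposition 4.1 of \cite{bbc}, each producing a semi-welded equivalence of the $\Zh$-images. Births and deaths take place in a small disk disjoint from existing crossings and so induce the corresponding birth or death on the sublink $\vec{L}\subset\Zh(\vec{L})$ while leaving $\vec{\omega}$ untouched.

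The substantive case is the saddle. I would first apply detour moves to push any arcs of $\vec{\omega}$ lying over the saddle region out of the way, which is always possible since $\vec{\omega}$-arcs only overcross the rest of the diagram. Then I perform the saddle on $\vec{L}$ and reroute $\vec{\omega}$ through the rearranged diagram. The rerouting may produce a different ordering of the $\vec{\omega}$-arcs than one would obtain by applying $\Zh$ to the post-saddle diagram directly, but the argument summarized by Figure \ref{fig_zh_wd}---namely, that any two orderings differ by transpositions, each realizable by a detour move together with a forbidden overpass on $\vec{\omega}$---shows the two choices are semi-welded equivalent.

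The main subtlety, which I expect to be routine rather than a genuine obstacle, is bookkeeping: ensuring that $\vec{\omega}$ can be rerouted inside $[0,1]\times[0,(m+1)+1]$ while its endpoints remain pinned at $(0,m+1)$ and $(1,m+1)$, and that the Euler characteristic count on each component of $\vec{L}$ is preserved under the lift. Both are immediate once one observes that all detours and overpass moves used are supported in the interior of the rectangle and do not alter the endpoints or component structure of the underlying string link.
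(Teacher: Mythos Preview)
Your proposal is correct and follows essentially the same approach as the paper. The paper does not give a separate proof of this corollary at all; it simply states that the same argument used to prove Theorem~\ref{thm_semi_weld_conc} applies, and your move-by-move verification is precisely an expanded version of that argument adapted to the string link setting.
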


The Tube map may also be applied to a virtual string link diagram $\vec{L}$ (see Figure \ref{fig_tube_string}). The image $\text{Tube}(\vec{L})$ is a broken surface diagram of a \emph{ribbon tube}. Ribbon tubes are defined as follows. Let $B=B^2$ and choose distinct points $p_1,\ldots,p_m$ in $\text{int}(B)$. Let $B_1,\ldots,B_m$ be pairwise disjoint discs in $\text{int}(B)$ with $B_i$ centered at $p_i$. Henceforth we identify $B^3$ with $B^2 \times I$ and we set $B=B^2 \times \frac{1}{2}$. An \emph{$m$-component ribbon tube} is a smooth and proper embedding $\vec{T}$ of a disjoint union of $m$ annuli $A_1,\ldots,A_m$ into $B^3 \times I$,   such that for each $i$, the boundary components $S^1\times 0$ and $S^1 \times 1$ of $A_i\approx S^1 \times I$,  are identified with $\partial B_i \times 0$ and $\partial B_i \times 1$. Furthermore, it is required that the collection of $m$ $2$-spheres $(B_i \times 0) \cup A_i \cup (B_i \times 1)$ bound a collection of immersed $3$-balls that intersect themselves and each other only in ribbon singularities. This last requirement will not be used directly here; the interested reader is referred to Audoux et al. \cite{abmw} for more details. For a ribbon tube $\vec{T}$ in $B^3 \times I$, set $\partial_j \vec{T}=\vec{T} \cap B^3 \times j$ for $j=0,1$. Note that $\partial_j \vec{T}$ is an unlink $U$ in $B^3 \times j$. 

\begin{figure}[htb]
\centerline{
\begin{tabular}{c} \\ \\
\def\svgwidth{4in}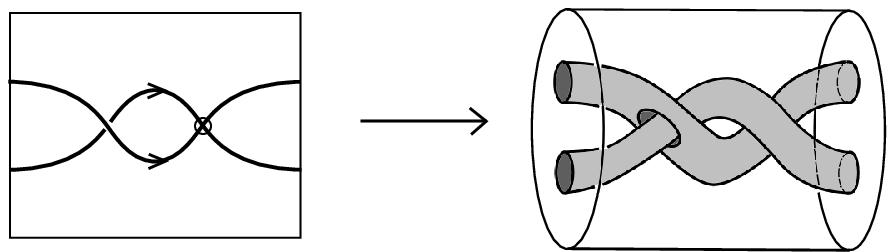
\end{tabular}
}
\caption{Tube for virtual string links.} \label{fig_tube_string}
\end{figure}

Two $m$-component ribbon tubes $\vec{T},\vec{T}^*$ are said to be \emph{concordant} if there is a smooth and proper embedding $\Lambda$ of $\bigsqcup_{i=1}^m (S^1 \times I) \times I$ into $B^3 \times I \times I$ such that $\bigsqcup_{i=1}^m (S^1 \times I) \times 0$ is identified with $-T$ and $\bigsqcup_{i=1}^m (S^1 \times I) \times 1$ is identified with $T^*$. The same argument as used in the proof of Theorem \ref{thm_tube_conc} shows that a welded concordance of virtual string links $\vec{L},\vec{L}^*$ gives a handle decomposition of a concordance between $\text{Tube}(\vec{L})$, $\text{Tube}(\vec{L}^*)$. Hence we have the following result.

\begin{corollary} \label{cor_tube_conc} If $\vec{L},\vec{L}^*$ are welded-concordant virtual string link diagrams, then $\text{Tube}(\vec{L})$ and $\text{Tube}(\vec{L}^*)$ are concordant ribbon tubes.
\end{corollary}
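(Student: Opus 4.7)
The plan is to run the proof of Theorem \ref{thm_tube_conc} (i.e.\ \cite{bbc}, Proposition 4.7) in parallel, and then check that the modifications required for string links preserve the boundary conditions $\partial_j \vec{T} \subset B^3 \times j$ for $j = 0,1$. First, by definition of welded concordance of virtual string links, there is a finite movie $\vec{L}=\vec{L}_0,\vec{L}_1,\ldots,\vec{L}_N=\vec{L}^*$ where each transition is either (a) a Reidemeister move, detour, or forbidden overpass move, or (b) a birth, death, or saddle that fixes the endpoints of the intervals and satisfies the Euler characteristic condition \eqref{eqn_euler} on each component. I would build a cobordism $\Lambda \subset B^3 \times I \times I$ level by level, concatenating in the last $I$-coordinate.

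For transitions of type (a), Satoh's theorem (the string link version, obtained by the same Roseman-move analysis as in the closed case) produces an ambient isotopy of $B^3 \times I$ rel $\partial B^3 \times I$ carrying $\text{Tube}(\vec{L}_i)$ to $\text{Tube}(\vec{L}_{i+1})$; the trace of this isotopy is a product cobordism $\text{Tube}(\vec{L}_i) \times [t_i, t_{i+1}]$ (reparametrized by the isotopy) and automatically respects the endpoint discs $B_j \times \{0,1\}$. For transitions of type (b) I would use the handle attachment recipe of \cite{bbc}: a birth corresponds to attaching a $0$-handle whose boundary contribution is a small unknotted torus, a death to a $2$-handle, and a saddle to a $1$-handle attached along a pair of discs on $\text{Tube}(\vec{L}_i)$ transverse to the tubing direction. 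Each such handle is attached in the interior of $B^3 \times (t_i, t_{i+1}) \times I$, so nothing happens near $\partial B^3 \times I \times I$ and the endpoint discs at $\partial_0, \partial_1$ are undisturbed.

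The key point to verify is that the local surgery produced by a saddle, applied to the ribbon tube picture, yields a smoothly embedded $1$-handle (an annulus) joining the resulting surface components, and that on each string-link component the Euler characteristic bookkeeping \eqref{eqn_euler} matches the requirement that $\Lambda_i \approx S^1 \times I \times I$ is an annulus (not a higher genus surface). This is exactly the computation carried out in \cite{bbc} for $S^1 \times S^1 \times I$ components in the closed case, and the only real difference here is that our components have boundary $\partial B_j \times \{0,1\}$ instead of being closed; since births, deaths, and saddles happen in the interior of each string-link component, the same local model for the handle attachment applies verbatim.

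Concatenating all of these pieces yields a smooth, properly embedded union of annuli $\Lambda \subset B^3 \times I \times I$ with $\Lambda \cap (B^3 \times I \times 0) = -\text{Tube}(\vec{L})$ and $\Lambda \cap (B^3 \times I \times 1) = \text{Tube}(\vec{L}^*)$, which is precisely a concordance of ribbon tubes. The main (only) obstacle is the saddle case: one must check that the local surgery on the tubed surface is genuinely a $1$-handle attachment and that the ribbon property is preserved along the trace, but this is done in \cite{bbc} and transplants without change because the singularity model and Roseman-move analysis are local.
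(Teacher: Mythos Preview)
Your proposal is correct and follows exactly the approach the paper takes: the paper simply states that ``the same argument as used in the proof of Theorem \ref{thm_tube_conc}'' applies, and you have spelled out that argument, including the relevant boundary-condition check for string links. One small slip: you call $\Lambda_i \approx S^1 \times I \times I$ ``an annulus,'' but it is a $3$-manifold (annulus $\times I$); the Euler-characteristic bookkeeping still works as you describe.
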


Lastly we define groups and extended groups of virtual string links. An \emph{arc} of $\vec{L}$ is a path on the diagram of $\vec{L}$ between two undercrossings or an undercrossing and a string endpoint. A \emph{short arc} of $\vec{L}$ is a path on the diagram of $\vec{L}$ between two classical crossings or a classical crossing and an endpoint. Virtual crossings are ignored in both arcs and short arcs.

\begin{definition}[The group $G(\vec{L})$ and its extension $\widetilde{G}(\vec{L})$] Let $\vec{L}$ be a virtual string link. The \emph{group $G(\vec{L})$ of $\vec{L}$} is the group whose generators are the arcs of $\vec{L}$ and whose relations are as shown in Figure \ref{fig_group_rels}. The \emph{group extension $\widetilde{G}(\vec{L})$ of $G(\vec{L})$} is the group whose generators are the short arcs of $\vec{L}$ and whose relations are as shown in Figure \ref{fig_ext_group_rels}.  
\end{definition}

As in the case of virtual knots, the group of $\vec{L}$ can be realized geometrically by the Tube map. Meridians of ribbon tubes are defined exactly as in the case of ribbon torus links (see Section \ref{sec_longitudes}). If $a$ is an arc of $\vec{L}$ and $\alpha$ its corresponding annulus in $\text{Tube}(\vec{L})$, the map $a \to m_{\alpha}$ is an isomorphism $G(\vec{L})\to \pi_1(B^3 \times I \smallsetminus \text{Tube}(\vec{L}),b)$.  The composition $\text{Tube}\circ\Zh$ then gives a geometric realization of $\widetilde{G}(\vec{L})$. These facts are recorded below.

\begin{theorem}[Audoux et al. \cite{abmw},Proposition 3.13] \label{thm_audoux} $G(\vec{L})\cong \pi_1(B^3 \times I \smallsetminus \text{Tube}(\vec{L}),b)$
\end{theorem}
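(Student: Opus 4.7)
The plan is to mimic the proof of the virtual-link analogue (Satoh's theorem, Theorem \ref{thm_satoh}) in the string-link setting, with the minor adjustment that the ambient manifold is $B^3 \times I$ with the endpoints of the tubes glued into the boundary rather than $S^4$. First, I would construct a homomorphism $\Phi: G(\vec{L}) \to \pi_1(B^3 \times I \smallsetminus \text{Tube}(\vec{L}), b)$ by sending each generator $a$, corresponding to an arc of $\vec{L}$, to the meridian $m_\alpha$ of the outer annulus $\alpha$ in $\text{Tube}(\vec{L})$ associated with $a$. To verify this is well-defined, one checks that the Wirtinger-type relations in Figure \ref{fig_group_rels} hold among the chosen meridians in $\pi_1$: at a classical crossing the undercrossing band of $\vec{L}$ becomes two outer annuli joined by an inner annulus through which the overcrossing tube passes, and a standard loop-pushing argument shows that the meridians of the two outer annuli are conjugate by the meridian of the overcrossing tube, precisely the relation $d = a^{-1} b a$ (or $c = b a b^{-1}$) from the figure. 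At a virtual crossing the two tubes are disjoint in $B^3 \times I$, so no relation arises, matching the combinatorial definition.

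Second, I would establish the reverse: that $\pi_1(B^3 \times I \smallsetminus \text{Tube}(\vec{L}), b)$ admits a Wirtinger-type presentation whose generators and relations are exactly those of $G(\vec{L})$. The argument is standard for broken surface diagrams. Put the diagram in generic position so that the singular set is a disjoint union of ribbon double arcs. Take a regular neighborhood $N$ of $\text{Tube}(\vec{L})$; decompose $N$ into the pieces over outer annuli (each a $D^2$-bundle meeting $B^3 \times \partial I$ in meridional discs) and over inner annuli (each contributing a relation). A Van Kampen decomposition of the complement using these pieces, together with the arc $b$ to a common basepoint, produces a presentation with one generator per outer annulus and two possible relations per inner annulus. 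Recording the signs of the classical crossings and working out what conjugation takes place across each inner annulus yields exactly the relations of Figure \ref{fig_group_rels}.

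The main subtlety is the boundary behavior: because $\vec{L}$ is a string link, each component of $\text{Tube}(\vec{L})$ is an annulus whose two boundary circles sit in $B^3 \times \{0\}$ and $B^3 \times \{1\}$, not a closed torus as in the ribbon torus link case. One must check that this contributes no extra relation to $\pi_1$ of the complement beyond those coming from classical crossings, i.e., that near the boundary the complement deformation retracts to a wedge of meridional circles (one per strand endpoint). This is where I would expect the proof to require the most care; a Mayer--Vietoris or direct deformation retraction argument using a collar of $B^3 \times \partial I$ in $B^3 \times I$ shows that gluing the punctured $B^3$-boundary does not identify or kill any of the meridional generators.

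Finally, with the presentations matched, $\Phi$ is visibly surjective since its image contains all the Wirtinger generators, and it is injective because the presentation on the topological side is (up to Tietze moves) literally the defining presentation of $G(\vec{L})$. This completes the isomorphism. As a consistency check, restricting to the closure operation that converts a string link to a virtual link should recover Satoh's Theorem \ref{thm_satoh}, so any discrepancy in the boundary analysis can be caught by comparing with the closed case.
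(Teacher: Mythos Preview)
The paper does not actually prove this theorem; it is stated with attribution to Audoux et al.\ \cite{abmw}, Proposition 3.13, and no proof is given in the text beyond the sentence preceding the statement, which identifies the isomorphism as $a \mapsto m_\alpha$. Your sketch is a correct and standard way to establish the result, and it is essentially the approach taken in the cited reference (and in Satoh's original argument for the closed case): build the map on generators via meridians of outer annuli, verify the Wirtinger relations across inner annuli, and use a van Kampen decomposition of the complement of the broken surface diagram to see that these generators and relations give a complete presentation of $\pi_1$. Your attention to the boundary behaviour is appropriate; the point is that near $B^3 \times \{0,1\}$ the tube complement is a product, so the collar contributes nothing new, and this is exactly what distinguishes the string-link case from the closed case.
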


\begin{corollary} $\widetilde{G}(\vec{L}) \cong G(\Zh(\vec{L})) \cong \pi_1(B^3 \times I \smallsetminus \text{Tube} \circ\Zh(\vec{L}),b)$.
\end{corollary}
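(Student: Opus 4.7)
The plan is to reduce the corollary to two ingredients already established in the paper: a string-link analog of Theorem \ref{thm_zh}, namely $\widetilde{G}(\vec{L}) \cong G(\Zh(\vec{L}))$, and the geometric realization of Theorem \ref{thm_audoux} applied to the $(m+1)$-component virtual string link $\Zh(\vec{L})$. Composing the two gives the asserted chain of isomorphisms.

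First I would prove $\widetilde{G}(\vec{L}) \cong G(\Zh(\vec{L}))$ by adapting the proof of Theorem \ref{thm_zh} (Proposition 4.5 of \cite{bbc}) to the string-link setting. Two structural observations match the generators of the two groups. First, by construction the $\omega$-component of $\Zh(\vec{L})$ is an overstrand at every classical crossing and its arcs are stitched together with only virtual crossings introduced, so $\omega$ contains no undercrossings and contributes a single generator $v$ to $G(\Zh(\vec{L}))$. Second, the two new undercrossings of $\omega$ with the two strands of $\vec{L}$ at each classical crossing of $\vec{L}$ subdivide the arcs of $\vec{L}$ at exactly the short-arc boundaries, so the arcs of $\vec{L}$ in $\Zh(\vec{L})$ are precisely the short arcs of $\vec{L}$. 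Hence the generators of $G(\Zh(\vec{L}))$ coincide with those of $\widetilde{G}(\vec{L})$.

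For the relations, each classical crossing of $\vec{L}$ yields three Wirtinger relations in $G(\Zh(\vec{L}))$: the original crossing plus the two new undercrossings of the strands with $\omega$. Direct calculation at a positive crossing gives $c = v a v^{-1}$ from the overstrand's undercrossing with $v$ and, after back-substitution through the original crossing relation, $d = a^{-1} v^{-1} b v a$; the negative crossing case is analogous. These are precisely the defining relations of $\widetilde{G}(\vec{L})$ in Figure \ref{fig_ext_group_rels}, so the presentations agree up to Tietze moves. Well-definedness of $\Zh(\vec{L})$ up to semi-welded equivalence absorbs any choice in how the arcs of $\omega$ are drawn, which in turn guarantees that $G(\Zh(\vec{L}))$ is independent of that choice.

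With the isomorphism $\widetilde{G}(\vec{L}) \cong G(\Zh(\vec{L}))$ in hand, Theorem \ref{thm_audoux} applied to the $(m+1)$-component virtual string link $\Zh(\vec{L})$ yields $G(\Zh(\vec{L})) \cong \pi_1(B^3 \times I \smallsetminus \text{Tube}(\Zh(\vec{L})), b)$, and composing the two gives the corollary. The main obstacle is the Wirtinger bookkeeping in the second paragraph: although conceptually identical to the virtual link case of Theorem \ref{thm_zh}, it must be carried out separately for both crossing signs and depends mildly on the local drawing of $\omega$. This is the only new computation required; everything else follows by invoking earlier results.
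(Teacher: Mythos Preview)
Your proposal is correct and follows exactly the route the paper intends: the paper states this corollary without proof immediately after Theorem \ref{thm_audoux}, and your two-step reduction---first adapting Theorem \ref{thm_zh} to string links to get $\widetilde{G}(\vec{L}) \cong G(\Zh(\vec{L}))$, then applying Theorem \ref{thm_audoux} to the $(m+1)$-component string link $\Zh(\vec{L})$---is precisely what is implicit. Your Wirtinger bookkeeping is the string-link version of the argument in \cite{bbc}, Proposition 4.5, and no genuinely new idea is required.
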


\subsection{Artin representations} \label{sec_artin_links} The first step in defining the Artin representation is the following Chen-Milnor theorem for virtual string links. Given a virtual string link diagram $\vec{L}$, label the arcs of the $i$-th component $\vec{K}_i$ of $\vec{L}$ as follows. Let $a_{i,1}$ be the arc containing the left-hand endpoint. Successively label the arcs while traversing $\vec{K}_i$ as $a_{i,2}, \ldots,a_{i,n_i}$. The arc $a_{i,1}$ will be called the \emph{initial arc} and $a_{i,n_i}$ will be called the \emph{terminal arc}. The initial and terminal arc can be the same arc.

\begin{theorem} \label{cor_chen_milnor_string} Let $\vec{L}$ be an $m$-component virtual string link diagram, let $a_1,\ldots,a_m$ be the initial arcs of the components of $\vec{L}$, $G=G(\vec{L})$ and $F$ the free group on $a_1,\ldots,a_m$. Then:
\[
\xymatrix{
\phi^{(q)}:Q_{q}(G) \ar[r]^-{\cong}& Q_{q}(F).}
\]
\end{theorem}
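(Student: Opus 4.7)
The plan is to apply the general Chen-Milnor Theorem \ref{thm_gen_chen_milnor} to the Wirtinger-type presentation of $G(\vec{L})$, and observe that for string links all conjugacy classes of generators are \emph{serial} rather than cyclic, so that no return relations appear.

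First I would verify that the standard Wirtinger-type presentation of $G(\vec{L})$ coming from the diagram is serial in the sense of Definition \ref{defn_serial}. For each component $\vec{K}_i$, label the arcs in order $a_{i,1}, a_{i,2}, \ldots, a_{i,n_i}$ as prescribed, so that $a_{i,1}$ is the initial arc and $a_{i,n_i}$ the terminal arc. Traveling along $\vec{K}_i$, every crossing contributes a relation of the form $a_{i,j+1} = \bar{w}_{i,j} a_{i,j} w_{i,j}$ for $1 \le j \le n_i - 1$ (with $w_{i,j}$ a generator or its inverse, read from the overstrand at the crossing); virtual crossings produce no relations. Crucially, since $\vec{L}$ is a string link and not a closed link, there is no crossing that forces a return relation identifying $a_{i,1}$ with a conjugate of $a_{i,n_i}$. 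Hence $C_i = \{a_{i,1}, \ldots, a_{i,n_i}\}$ is serial, never cyclic.

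Next I would feed this presentation into Theorem \ref{thm_gen_chen_milnor}. In the notation of that theorem, we have $r = m$ (every conjugacy class is serial) and the set of cyclic classes $\{C_{r+1}, \ldots, C_m\}$ is empty. Consequently the list of commutator relations $[c_{r+1}, \phi^{(q)}(\lambda_{r+1})], \ldots, [c_m, \phi^{(q)}(\lambda_m)]$ is empty, and the conclusion of Theorem \ref{thm_gen_chen_milnor} simplifies to
\[
Q_q(G) \;\cong\; \langle a_1, \ldots, a_m \mid F_q \rangle \;\cong\; Q_q(F).
\]
Tracking the isomorphism through the proof of Theorem \ref{thm_gen_chen_milnor}, the map in question is exactly $\phi^{(q)} = \phi \circ p^{(q)}$, where $\phi$ collapses each serial class to its initial generator. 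Lemma \ref{thm_pq_commutes} ensures $p^{(q)}$ descends to an automorphism of $Q_q(\Phi)$, so $\phi^{(q)}$ descends to the claimed isomorphism $Q_q(G) \to Q_q(F)$.

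The only step requiring care is the first one: confirming that the Wirtinger presentation of $G(\vec{L})$ really is serial with no hidden return relations, and that the chosen labeling is consistent with the ordering assumed in Definition \ref{defn_serial}. Once this bookkeeping is done, Theorem \ref{thm_gen_chen_milnor} applies essentially verbatim and yields the result with no further work.
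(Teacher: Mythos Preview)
Your approach is correct and matches the paper's proof almost exactly: verify the Wirtinger presentation is serial and invoke Theorem~\ref{thm_gen_chen_milnor}. The one small oversight is your claim that every $C_i$ is serial and never cyclic. If the $i$-th component of $\vec{L}$ has no undercrossings, then its initial and terminal arcs coincide, so $n_i=1$ and $C_i$ is \emph{trivially cyclic} rather than serial (Definition~\ref{defn_serial} requires $n_i\ge 2$ for a serial class). The paper notes this explicitly; it does no harm because for a trivially cyclic class the parallel word $\lambda_i$ is the identity, so the corresponding return relation $[c_i,\phi^{(q)}(\lambda_i)]$ is trivial and the presentation of $Q_q(G)$ still reduces to $\langle a_1,\ldots,a_m\mid F_q\rangle$.
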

\begin{proof}  Successively writing down the relations, the last relation is $a_{i,n_i}=\overline{u}_{i,n_i-1}a_{i,n_i-1} u_{i,n_i-1}$. Thus, $G(\vec{L})$ has a serial Wirtinger-type presentation such that each conjugacy classes of generators is either serial or trivially cyclic. A trivially cyclic conjugacy class occurs when the initial and terminal arcs coincide. As there are no nontrivial return relations, the nilpotent quotients have no relations other than the elements of $F_{q}$ and result follows from Theorem \ref{thm_gen_chen_milnor}.
\end{proof}

By Theorem \ref{cor_chen_milnor_string} there are isomorphisms $Q_{q}(F) \to Q_{q}(G(\vec{L}))$ defined as follows:
\[
\psi_0^{(q)},\psi_1^{(q)} : Q_{q}(F) \to Q_{q}(G(\vec{L})). 
\]
\begin{align*}
\psi_0^{(q)}(a_i) &= a_{i,1} \\
\psi_1^{(q)}(a_i) &= a_{i,n_i}
\end{align*}
In other words, $\psi_0^{(q)},\psi_1^{(q)}$ map the generators of $F$ to the initial and terminal arcs of $\vec{L}$, respectively.

\begin{definition}[Artin representation] The Artin representation of a virtual string link diagram $\vec{L}$ is the composition:
\[
\xymatrix{
A^{(q)}(\vec{L}): Q_{q+1}(F) \ar[rr]^-{\psi_0^{(q+1)}} & & Q_{q+1}(G(\vec{L})) \ar[rr]^-{\left(\psi_1^{(q+1)}\right)^{-1}} & & Q_{q+1}(F) 
}
\]
\end{definition}

\begin{theorem} \label{thm_artin_form} Suppose $\vec{L}=\vec{K}_1 \cup \cdots \cup \vec{K}_m$ is a virtual string link diagram and $\lambda_i$ is a longitude word for $\vec{K}_i$. Set $\lambda_i^{(q)}=\phi^{(q)}(\lambda_i)$  Then: 
\[
A^{(q)}(\vec{L})(a_i)=\lambda_i^{(q)} a_i \overline{\lambda}_i^{(q)}.
\]
\end{theorem}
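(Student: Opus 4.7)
The plan is to unfold $A^{(q)}(\vec{L})(a_i)$ according to its definition and then reduce the formula to an identity modulo $G(\vec{L})_{q+1}$, adapting Milnor's proof of \cite{milnor}, Theorem 4.

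First, by the definition $\psi_0^{(q+1)}(a_i) = a_{i,1}$ we have
\[
A^{(q)}(\vec{L})(a_i) \;=\; (\psi_1^{(q+1)})^{-1}(a_{i,1})
\]
in $Q_{q+1}(F)$. The serial Wirtinger-type presentation of $G(\vec{L})$ used in the proof of Theorem \ref{cor_chen_milnor_string} supplies the relation $a_{i,n_i} = \overline{\lambda}_i\, a_{i,1}\, \lambda_i$, equivalently $a_{i,1} = \lambda_i\, a_{i,n_i}\, \overline{\lambda}_i$, in $G(\vec{L})$. Since $(\psi_1^{(q+1)})^{-1}(a_{j,n_j}) = a_j$ by construction, applying $(\psi_1^{(q+1)})^{-1}$ to this relation yields
\[
A^{(q)}(\vec{L})(a_i) \;=\; (\psi_1^{(q+1)})^{-1}(\lambda_i)\cdot a_i \cdot(\psi_1^{(q+1)})^{-1}(\overline{\lambda}_i).
\]

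The theorem thus reduces to the identity $(\psi_1^{(q+1)})^{-1}(\lambda_i) = \lambda_i^{(q)}$ in $Q_{q+1}(F)$, or equivalently
\[
\psi_1^{(q+1)}(\lambda_i^{(q)}) \;\equiv\; \lambda_i \pmod{G(\vec{L})_{q+1}}.
\]
To establish this I would first use that $\psi_0^{(q+1)} = (\phi^{(q+1)})^{-1}$ and, by the commutative diagram of Lemma \ref{thm_pq_commutes}, $\lambda_i^{(q+1)} \equiv \lambda_i^{(q)} \pmod{F_q}$; hence $\psi_0^{(q+1)}(\lambda_i^{(q)}) \equiv \lambda_i$ modulo $G(\vec{L})_q$. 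The residual step is to upgrade this to $\psi_1^{(q+1)}(\lambda_i^{(q)}) \equiv \lambda_i$ modulo $G(\vec{L})_{q+1}$, and simultaneously to replace $\psi_0$ by $\psi_1$. For each generator $a_j$ appearing in $\lambda_i^{(q)}$, the substitutions $\psi_0(a_j) = a_{j,1}$ and $\psi_1(a_j) = a_{j,n_j} = \overline{\lambda}_j\, a_{j,1}\, \lambda_j$ differ by conjugation by $\lambda_j$, and the resulting correction between $\psi_1(\lambda_i^{(q)})$ and $\psi_0(\lambda_i^{(q)})$ can be expressed as a product of commutators of the form $[\lambda_j, w]$ with $w$ a subword of $\lambda_i^{(q)}$. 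The longitude property of $\lambda_i$---its zero exponent sum in the $i$-th conjugacy class---together with the centrality of $G(\vec{L})_q / G(\vec{L})_{q+1}$ in $Q_{q+1}(G(\vec{L}))$ then forces these commutators into $G(\vec{L})_{q+1}$, which is what is required.

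The main obstacle is this final step. Carrying the induction forward is delicate: verifying that the commutator corrections lie in $G(\vec{L})_{q+1}$ requires the inductive hypothesis applied to the longitude words $\lambda_j^{(q-1)}$ that occur implicitly in the rewriting $p^{(q)}(\lambda_i)$. Once the bookkeeping from Milnor's proof is adapted to the serial (rather than cyclic) Wirtinger presentation---in particular, exploiting the absence of return relations for the conjugacy classes $C_1,\ldots,C_m$ guaranteed by Theorem \ref{cor_chen_milnor_string}---the identification goes through and the theorem follows.
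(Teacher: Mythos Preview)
Your reduction to the identity $(\psi_1^{(q+1)})^{-1}(\lambda_i)=\lambda_i^{(q)}$ is correct, but the argument you sketch for it does not go through, and in fact you are making the computation far harder than it needs to be.

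The difficulty is that $(\psi_1^{(q+1)})^{-1}$ is not given to you explicitly, whereas $(\psi_0^{(q+1)})^{-1}=\phi^{(q+1)}$ is. Your attempt to compare $\psi_1^{(q+1)}(\lambda_i^{(q)})$ with $\psi_0^{(q+1)}(\lambda_i^{(q)})$ runs into two problems. First, from $\lambda_i^{(q)}\equiv\lambda_i^{(q+1)}\pmod{F_q}$ you only obtain $\psi_0^{(q+1)}(\lambda_i^{(q)})\equiv\lambda_i\pmod{G_q}$, not $\pmod{G_{q+1}}$ as required. Second, the commutator corrections $[\lambda_j,w]$ you describe have no reason to lie in $G_{q+1}$: the longitude condition on $\lambda_i$ says nothing about the depth of $\lambda_j$ for $j\ne i$ in the lower central series, and zero exponent sum in a single conjugacy class does not even place $\lambda_j$ in $G_2$. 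So the appeal to centrality of $G_q/G_{q+1}$ is not enough to kill these terms.

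The paper's proof avoids all of this with a single observation: compute the \emph{inverse} automorphism first. Since $A^{(q)}(\vec{L})=(\psi_1^{(q+1)})^{-1}\circ\psi_0^{(q+1)}$, one has $(A^{(q)}(\vec{L}))^{-1}=(\psi_0^{(q+1)})^{-1}\circ\psi_1^{(q+1)}=\phi^{(q+1)}\circ\psi_1^{(q+1)}$. Applied to $a_i$ this gives $\phi\circ p^{(q+1)}(a_{i,n_i})$, and the recursive definition $p^{(q+1)}(a_{i,n_i})=p^{(q)}(\bar l_{i,n_i-1}\,a_{i,1}\,l_{i,n_i-1})$ immediately yields $\overline{\lambda}_i^{(q)}\,a_i\,\lambda_i^{(q)}$. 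Inverting this automorphism gives the theorem in one line, with no induction and no commutator bookkeeping.
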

\begin{proof} Since $\psi_0^{(q+1)}(a_i)=a_{i,1}$, the inverse of $\psi_0^{(q+1)}$ is $\phi^{(q+1)}$. Refer back to the definition of $\phi^{(q)}=\phi \circ p^{(q)}$ given in Section \ref{sec_gen_chen_milnor}. Then observe: 
\[
\left(\psi^{(q+1)}_0\right)^{-1}\circ \psi_1^{(q+1)}(a_i)=\phi \circ p^{(q+1)}(a_{i,n_i})=\phi \circ p^{(q)}(l_{i, n_i-1}^{-1} a_{i, 1} l_{i, n_i-1})=\overline{\lambda}_i^{(q)} \cdot a_i \cdot \lambda_i^{(q)}.
\]
The claim follows, since $A^{(q)}(\vec{L})=\left(\psi_1^{(q+1)}\right)^{-1}\circ \psi_0^{(q+1)}=\left(\left(\psi_0^{(q+1)}\right)^{-1}\circ \psi_1^{(q+1)}\right)^{-1}$.
\end{proof}

\begin{remark} \label{remark_exp_sum_zero} The Artin representation $A^{(r)}$ of $\vec{L}$ is unchanged if $\lambda_i^{(r)}$ is replaced with $\lambda_i^{(r)} a_i^k$ for some integer $k$. Therefore, we may assume that $\lambda_i^{(r)}$ is a longitude word (see Definition \ref{defn_word}). 
\end{remark}

Next we realize the Artin representation geometrically in terms of the Tube map. First we fix generators and a base point for $\pi_1(B^3 \times I \smallsetminus \vec{T})$, where $\vec{T}=\text{Tube}(\vec{L})$ and $\vec{L}$ is an $m$-component virtual string link. Set $b=(0,1,1/2) \in B^3 \subset \mathbb{R}^3$ and fix closed paths $\nu_i$ representing meridians of the $i$-th unknotted component of the unlink $U=\partial_0 \vec{T}$. Note that $\pi_1(B^3 \smallsetminus U,b)\cong F(m)$. An arc $a_{i,j}$ of $\vec{L}$ corresponds to a meridian $m_{\alpha_{i,j}}$ of an annulus $\alpha_{i,j}$ in the broken surface diagram. The isomorphism $G(\vec{L}) \to \pi_1(B^3 \times I \smallsetminus \text{Tube}(L),b)$ assigns $a_{i,j}$ to $m_{\alpha_{i,j}}$. Generators for the initial and terminal arcs of each component of $\vec{L}$ are fixed as follows. The meridians $m_{\alpha_{i, 1}}$ are identified with $\nu_i \times \{0\}$. Let $\sigma:[0,1] \to B^3 \times I$ be the path $\sigma(t)=(1-t)\cdot(0,1,1/2,0)+t\cdot(0,1,1/2,1)$ from $b \times 0$ to $b \times 1$. Take the meridians $m_{\alpha_{i, n_i}}$ to be $\sigma (\nu_i\times 1) \sigma^{-1}$. To realize the Artin representation, we use the following inclusion maps, where $j=0,1$.
\[
\tau_j:(B^3 \times j \smallsetminus \partial_j \vec{T},b \times j ) \to (B^3 \times I \smallsetminus \vec{T},b \times j). 
\]
Define $\eta_{\sigma}:\pi_1(B^3 \times I \smallsetminus \vec{T}, b \times 0) \to \pi_1(B^3 \times I \smallsetminus \vec{T}, b \times 1)$ to be the change of base point map determined by conjugation with $\sigma$.

\begin{theorem} \label{thm_Artin_real} The following diagram commutes, with all arrows isomorphisms.
\[
\xymatrix{
Q_{q+1}(F) \ar[r]^-{(\tau_0)_{\#}} \ar[d]_{A^{(q)}(\vec{L})} & Q_{q+1}(\pi_1(B^3 \times I \smallsetminus \text{Tube}(\vec{L}),b \times 0)) \ar[d]^{\eta_{\sigma}} \\ 
Q_{q+1}(F) & Q_{q+1}(\pi_1(B^3 \times I \smallsetminus \text{Tube}(\vec{L}),b \times 1))\ar[l]^-{(\tau_1)_{\#}^{-1}}  
}
\]
\end{theorem}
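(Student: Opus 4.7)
The plan is to reduce the commutativity of the square to a direct computation on canonical generators, once the geometric inclusions $(\tau_0)_{\#}$ and $(\tau_1)_{\#}$ are identified with the purely algebraic maps $\psi_0^{(q+1)}$ and $\psi_1^{(q+1)}$ from Section \ref{sec_artin_links}, via the isomorphism $\kappa\colon G(\vec{L}) \xrightarrow{\cong} \pi_1(B^3 \times I \smallsetminus \text{Tube}(\vec{L}), b \times 0)$ of Theorem \ref{thm_audoux}. Recall that $\kappa$ sends the arc generator $a_{i,j}$ of $G(\vec{L})$ to the meridian $m_{\alpha_{i,j}}$ of the corresponding annulus.

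First I would fix the identification $F \cong \pi_1(B^3 \smallsetminus U, b)$ by $a_i \mapsto \nu_i$, which is consistent with the paper's stipulation $m_{\alpha_{i,1}} = \nu_i \times 0$. Since $\tau_0$ is an inclusion on $B^3 \times 0$, we have $(\tau_0)_{\#}(a_i) = \nu_i \times 0 = \kappa(a_{i,1}) = \kappa \circ \psi_0^{(q+1)}(a_i)$, so $(\tau_0)_{\#} = \kappa \circ \psi_0^{(q+1)}$ on $Q_{q+1}$. Similarly $(\tau_1)_{\#}(a_i) = \nu_i \times 1$, and adopting the standard change-of-basepoint convention $\eta_{\sigma}([\gamma]) = [\bar{\sigma}\cdot\gamma\cdot\sigma]$ for $\gamma$ based at $b \times 0$, a direct computation gives
\[
\eta_{\sigma}(m_{\alpha_{i,n_i}}) \;=\; \eta_{\sigma}\bigl(\sigma\cdot(\nu_i \times 1)\cdot\sigma^{-1}\bigr) \;=\; \nu_i \times 1 \;=\; (\tau_1)_{\#}(a_i),
\]
so $(\tau_1)_{\#} = \eta_{\sigma} \circ \kappa \circ \psi_1^{(q+1)}$ on $Q_{q+1}$.

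Theorem \ref{cor_chen_milnor_string} supplies that $\psi_0^{(q+1)}$ and $\psi_1^{(q+1)}$ are isomorphisms, and both $\kappa$ and $\eta_{\sigma}$ are isomorphisms on general grounds; the two displayed identifications then establish that $(\tau_0)_{\#}$ and $(\tau_1)_{\#}$ are isomorphisms on nilpotent quotients, in particular making sense of $(\tau_1)_{\#}^{-1}$. Commutativity follows at once:
\[
(\tau_1)_{\#}^{-1} \circ \eta_{\sigma} \circ (\tau_0)_{\#} \;=\; \bigl(\eta_{\sigma} \circ \kappa \circ \psi_1^{(q+1)}\bigr)^{-1} \circ \eta_{\sigma} \circ \kappa \circ \psi_0^{(q+1)} \;=\; \bigl(\psi_1^{(q+1)}\bigr)^{-1} \circ \psi_0^{(q+1)} \;=\; A^{(q)}(\vec{L}).
\]
The main thing to keep track of is the direction of $\eta_{\sigma}$: the roles of $\sigma$ and $\bar{\sigma}$ in the prescribed meridians $m_{\alpha_{i,n_i}} = \sigma(\nu_i \times 1)\sigma^{-1}$ must be handled consistently with the chosen basepoint-change convention, and this bookkeeping is essentially the only subtle part of the argument.
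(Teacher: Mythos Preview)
Your argument is correct. On commutativity the two proofs coincide: both simply unwind the definitions of the meridians $m_{\alpha_{i,1}}=\nu_i\times 0$ and $m_{\alpha_{i,n_i}}=\sigma(\nu_i\times 1)\sigma^{-1}$ and check on generators. The genuine difference is in how the horizontal maps are shown to be isomorphisms. The paper argues geometrically: a Mayer--Vietoris computation shows that $\tau_j$ induces isomorphisms on $H_1$ and $H_2$, and then Stallings' theorem (Lemma~\ref{thm_stallings}) upgrades this to isomorphisms on all nilpotent quotients. You instead factor $(\tau_j)_{\#}$ through the presentation isomorphism $\kappa$ of Theorem~\ref{thm_audoux} and the algebraic maps $\psi_j^{(q+1)}$, which are already known to be isomorphisms by Theorem~\ref{cor_chen_milnor_string}. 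Your route is shorter and avoids Stallings entirely at this stage; the paper's route has the advantage that the Mayer--Vietoris/Stallings machinery is exactly what is needed again in the concordance-invariance proof (Theorem~\ref{thm_artin}), so introducing it here serves as a warm-up.
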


\begin{proof} It follows from the definitions of the meridians $m_{\alpha_{i,1}},m_{\alpha_{i,n_i}}$ that the diagram commutes. It must be shown that the horizontal maps are isomorphisms. Let $V,N$ be tubular neighborhoods of $\partial_j \vec{T}$, $\vec{T}$, respectively. Using the Mayer-Vietoris sequences for each of the decompositions $B^3 \times j=(B^3 \times j\smallsetminus \partial_j\vec{T}) \cup V$ and $B^3 \times I=(B^3 \times I\smallsetminus \vec{T}) \cup N$, it follows that $H_k(\tau_j)$ is an isomorphism on homology for $k=1,2$ and $j=0,1$. Lemma \ref{thm_stallings} then implies that $(\tau_j)_{\#}$ descends to an isomorphism of the nilpotent quotients. 
\end{proof}

\begin{theorem}\label{thm_artin} If $\vec{L}$, $\vec{L}^*$ are welded-concordant virtual string links, then $A^{(q)}(\vec{L})=A^{(q)}(\vec{L}^*)$.
\end{theorem}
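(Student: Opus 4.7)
The plan is to upgrade the geometric realization of the Artin representation in Theorem \ref{thm_Artin_real} from a single ribbon tube to a concordance of ribbon tubes, paralleling how Lemma \ref{thm_bbc_nil_quo} upgrades Theorem \ref{thm_satoh}. By Corollary \ref{cor_tube_conc}, welded concordance of $\vec{L}$ and $\vec{L}^*$ produces a concordance $\Lambda$ from $\vec{T}=\text{Tube}(\vec{L})$ to $\vec{T}^*=\text{Tube}(\vec{L}^*)$ properly embedded in $B^3 \times I \times I$, with the last factor playing the role of the concordance parameter. Set $E = (B^3 \times I \times I) \smallsetminus \Lambda$, and consider the two end-inclusions
\[
j_0 : (B^3 \times I) \smallsetminus \vec{T} \hookrightarrow E, \qquad j_1 : (B^3 \times I) \smallsetminus \vec{T}^* \hookrightarrow E.
\]

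The first step is to prove that $(j_0)_{\#}$ and $(j_1)_{\#}$ induce isomorphisms on $Q_{q+1}$. Applying Mayer--Vietoris to the decomposition of $B^3 \times I \times I$ into $E$ and a tubular neighborhood of $\Lambda$, exactly as in the proof of Lemma \ref{thm_bbc_nil_quo}, one sees that $H_1(j_\epsilon)$ is an isomorphism and $H_2(j_\epsilon)$ is surjective, so Lemma \ref{thm_stallings} forces the nilpotent-quotient isomorphisms. I would next observe that since ribbon tubes have fixed boundary data, $\partial_\epsilon \vec{T} = \partial_\epsilon \vec{T}^* = U_\epsilon$ for the same unlink $U_\epsilon \subset B^3 \times \epsilon$, and $\Lambda$ properly restricts along its spatial boundary to $U_\epsilon \times I \subset B^3 \times \epsilon \times I$. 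In particular, the meridians $\nu_i$ of $U_0$ and their basepoint-conjugates $\sigma(\nu_i \times 1)\sigma^{-1}$ of $U_1$ from Section \ref{sec_artin_links} all lie in $E$ independently of whether one begins with $\vec{L}$ or with $\vec{L}^*$, so under $(j_\epsilon)_{\#}$ the Chen--Milnor identifications $\psi_0^{(q+1)}$ and $\psi_1^{(q+1)}$ for $\vec{L}$ and for $\vec{L}^*$ descend to the same pair of isomorphisms $Q_{q+1}(F) \to Q_{q+1}(\pi_1(E, b \times \epsilon \times 0))$.

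The final step is to assemble a single commutative super-diagram in which both copies of the diagram of Theorem \ref{thm_Artin_real} sit as subdiagrams via $(j_0)_{\#}$ and $(j_1)_{\#}$, sharing a common middle column built from $\pi_1(E, b \times 0 \times 0)$ and $\pi_1(E, b \times 1 \times 0)$ joined by the change-of-basepoint isomorphism along $\sigma \times 0$. Chasing this diagram shows that both $A^{(q)}(\vec{L})$ and $A^{(q)}(\vec{L}^*)$ factor through the same composition of maps passing through $\pi_1(E)$, and are therefore equal. The main obstacle I anticipate is the careful bookkeeping in this last step: one must verify that the basepoint path $\sigma$ can be pushed consistently through $E$ and that the resulting identifications of initial and terminal meridians are genuinely independent of the endpoint of the concordance used to read them off, which is ultimately made possible by the persistence of $U_0$ and $U_1$ throughout $\Lambda$ and by the freedom to choose a basepoint path in $E$ rather than in $B^3 \times I$.
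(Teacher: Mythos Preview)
Your proposal is correct and follows essentially the same approach as the paper. The only cosmetic difference is in how the basepoint bookkeeping you flag as the ``main obstacle'' is resolved: the paper collapses each face $(B^3 \times j \times I)\smallsetminus (U\times I)$ to a single copy of $B^3\smallsetminus U$, forming a quotient $M$ in which $\sigma\times 0$ and $\sigma\times 1$ become fixed-endpoint homotopic, whereas you propose to argue directly in $E$ using the product structure $U_\epsilon\times I$ on the spatial boundary of $\Lambda$ --- both encode the same homotopy.
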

\begin{proof} Let $\Lambda$ be a concordance between $\vec{T}=\text{Tube}(L),\vec{T}^*=\text{Tube}(L^*)$. For $i=0,1$, define inclusion maps of pointed spaces as follows:
\begin{align*}
\iota_i &: (B^3 \times I \smallsetminus \vec{T},b \times i) \to (B^3 \times I \times I \smallsetminus \Lambda, b \times i \times 0), \\
\iota_i^* &: (B^3 \times I \smallsetminus \vec{T}^* ,b \times i) \to (B^3 \times I \times I \smallsetminus \Lambda, b \times i \times 1).
\end{align*}
Using a Mayer-Vietoris sequence, as in the proof of Theorem \ref{thm_Artin_real}, it follows that $H_k(\iota_i), H_k(\iota_i^*)$ are isomorphisms on homology for $k=1,2$. Lemma \ref{thm_stallings} then implies that $(\iota_i)_{\#},(\iota_i^*)_{\#}$ induce isomorphisms on nilpotent quotients.

A welded concordance between $\vec{L},\vec{L}^*$ is a sequence of births, deaths, saddles, detour moves, Reidemeister moves, and forbidden overpass moves. This gives a handle decomposition of a concordance $\Lambda$ between $\vec{T}=\text{Tube}(\vec{L})$ and $\vec{T}^*=\text{Tube}(\vec{L}^*)$. Since these moves occur locally, they do not affect the ends of the ribbon tubes. Hence, it may be assumed that $\Lambda \cap (B^3 \times j \times t)$ is the unlink $U$ for $j=0,1$ and for all $t \in I$.  Let $M$ be the space obtained from $B^3 \times I \times I \smallsetminus \Lambda$ using the identifications: 
\begin{align*}
(B^3 \times 0 \times x) \smallsetminus (U \times 0 \times x) &\sim (B^3 \times 0 \times 0) \smallsetminus (U \times 0 \times 0) ,\, \forall x \in I,\\ 
(B^3 \times 1 \times x) \smallsetminus (U \times 1 \times x) &\sim (B^3 \times 1 \times 0) \smallsetminus (U \times 1 \times 0),\, \forall x \in I.
\end{align*}
This implies that $B^3 \times j \times 0 \smallsetminus \partial_j \vec{T} \times 0$ and $B^3 \times j \times 1 \smallsetminus \partial_j\vec{T}^*\times 1$ are identified to a single copy of $B^3 \smallsetminus U$ in $M$ for $j=0,1$. Observe also that $\pi_1(M,b \times j \times 0) \cong \pi_1(B^3 \times I \times I \smallsetminus \Lambda,b \times j\times 0)$ for $j=0,1$. For $j=0,1$, let $\eta_{\sigma \times j}:\pi_1(M, b \times 0 \times 0) \to \pi_1(M, b \times 1 \times 0)$ denote the change of base point map that conjugates by $\sigma \times j$. Now consider the diagram:
\small
\[
\xymatrix{
  & Q_{q+1}(\pi_1(B^3 \times I \smallsetminus \vec{T}, b \times 0)) \ar[r]_{\eta_{\sigma}} \ar[d]_{(\iota_0)_{\#}} & Q_{q+1}(\pi_1(B^3 \times I \smallsetminus \vec{T}, b \times 1)) \ar[d]_{(\iota_1)_{\#}}  & \\
Q_{q+1}(F) \ar@/^2pc/[ur]^-{(\tau_0)_{\#}} \ar[r] \ar@/_2pc/[dr]_-{(\tau_0)_{\#}} \ar[r] & Q_{q+1}(\pi_1(M,b \times 0 \times 0)) \ar@/^/[r]^-{\eta_{\sigma \times 0}} \ar@/_/[r]_-{\eta_{\sigma \times 1}} & Q_{q+1}(\pi_1(M,b \times 1 \times 0))   &\ar[l] Q_{q+1}(F) \ar@/_2pc/[lu]_-{(\tau_1)_{\#}} \ar@/^2pc/[ld]^-{(\tau_1)_{\#}}\\
  & Q_{q+1}(\pi_1(B^3 \times I \smallsetminus \vec{T}^*, b \times 0)) \ar[r]^{\eta_{\sigma}} \ar[u]^{(\iota_0^*)_{\#}} & Q_{q+1}(\pi_1(B^3 \times I \smallsetminus \vec{T}^*, b \times 1)) \ar[u]^{(\iota_1^*)_{\#}} &
}
\]
\normalsize
By Theorem \ref{thm_Artin_real}, the arrows along the top give the Artin representation of $\vec{L}$ and the arrows along the bottom give the Artin representation of $\vec{L}^*$. Triangles in the diagram commute as they contain only arrows that are induced by inclusions. In $M$, the paths $\sigma \times 0,\sigma \times 1$ are fixed endpoint homotopic. This implies that the squares above commute. Thus, $A^{(q)}(\vec{L})=A^{(q)}(\vec{L}^*)$.
\end{proof}

The last result in this section gives a computable concordance obstruction. It is used ahead in the proof that $\mathscr{VC}$ is not abelian.

\begin{corollary} \label{lemma_ab} Let $\vec{L}=\vec{K}_1 \cup \cdots \cup \vec{K}_m$, $\vec{L}^*=\vec{K}_1^* \cup \cdots \cup \vec{K}_m^*$ be $m$-component virtual string links and let $\lambda_i,\lambda_i^*$ be longitude words for $\vec{K}_i$, $\vec{K}_i^*$, respectively. Let $F=F(m)$. If $\vec{L}$ and $\vec{L}^*$ are concordant, then $\phi^{(q)}(\overline{\lambda}_i) \phi^{(q)}(\lambda^*_i) \in F_r$ for all $q \ge r$.
\end{corollary}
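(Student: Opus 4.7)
The plan is to use Theorem~\ref{thm_artin} to convert the concordance hypothesis into the equality $A^{(q)}(\vec{L}) = A^{(q)}(\vec{L}^*)$ of automorphisms of $Q_{q+1}(F)$, then unpack this equality using the explicit formula of Theorem~\ref{thm_artin_form}, and finally invoke a centralizer lemma in the free nilpotent group.

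First I would arrange that $\lambda_i$ and $\lambda_i^*$ are genuine longitude words in the sense of Definition~\ref{defn_word}, which is permitted by Remark~\ref{remark_exp_sum_zero}. A short check tracing through the definition $\phi^{(q)} = \phi\circ p^{(q)}$ shows that the $a_i$-exponent sum of $\phi^{(q)}(\lambda_i)$ coincides with the combined exponent sum of the generators $c_{i,*}$ in $\lambda_i$, and similarly for $\lambda_i^*$; both being zero, the element $z_q := \phi^{(q)}(\overline{\lambda}_i)\,\phi^{(q)}(\lambda_i^*) \in F = F(m)$ also has zero $a_i$-exponent sum. Next, since concordance of virtual string links trivially implies welded concordance, Theorem~\ref{thm_artin} furnishes $A^{(q)}(\vec{L}) = A^{(q)}(\vec{L}^*)$. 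Evaluating both sides at $a_i$ via Theorem~\ref{thm_artin_form} gives
\[
\phi^{(q)}(\lambda_i)\,a_i\,\phi^{(q)}(\overline{\lambda}_i) \;\equiv\; \phi^{(q)}(\lambda_i^*)\,a_i\,\phi^{(q)}(\overline{\lambda}_i^*) \pmod{F_{q+1}},
\]
which rearranges to $z_q \cdot a_i \equiv a_i \cdot z_q \pmod{F_{q+1}}$, i.e.\ $[z_q, a_i] \in F_{q+1}$.

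The last and crucial step is the following centralizer lemma: \emph{if $y \in F$ has zero $a_i$-exponent sum and $[y, a_i] \in F_{q+1}$, then $y \in F_q$}. I would prove this via the Magnus expansion $\mu \colon F \to \mathbb{Z}\langle\langle X_1,\ldots,X_m\rangle\rangle$, by inspecting the leading homogeneous term $Y_d$ of $\mu(y) - 1$: the leading term of $\mu([y, a_i]) - 1$ is the associative bracket $Y_d X_i - X_i Y_d$ of degree $d+1$, and vanishing of this bracket modulo degree $q+1$ forces either $d \geq q$ or $Y_d$ to commute with $X_i$ in the power series ring. The commuting case forces $Y_d = cX_i^d$, which contradicts $Y_d$ being a nonzero Lie polynomial of degree $d \geq 2$, and is excluded in degree $d = 1$ by the zero-exponent-sum hypothesis. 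Applying this lemma to $z_q$ yields $z_q \in F_q \subseteq F_r$ whenever $q \geq r$, as required. I expect the centralizer lemma to be the principal obstacle; the rest is simply a translation of the Artin representation identity into the free group $F$.
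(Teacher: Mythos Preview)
Your proposal is correct and follows essentially the same route as the paper: both use Theorems~\ref{thm_artin} and~\ref{thm_artin_form} to obtain $[a_i,\,\phi^{(q)}(\overline{\lambda}_i)\phi^{(q)}(\lambda_i^*)]\in F_{q+1}$, then invoke the centralizer fact that a word with zero $a_i$-exponent sum whose commutator with $a_i$ lies in $F_{q+1}$ must itself lie in $F_q$. The only cosmetic differences are that the paper cites this centralizer fact as \cite{MKS}, Corollary~5.12(iii), rather than proving it via the Magnus expansion, and the paper first argues at level $r$ and then appeals to Lemma~\ref{thm_pq_commutes} to pass to general $q\ge r$, whereas you work directly at level $q$ and obtain the (slightly stronger) conclusion $z_q\in F_q$.
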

\begin{proof} Set $a=a_i$, $\lambda=\phi^{(r)}(\lambda_i)$, and $\lambda^*=\phi^{(r)}(\lambda_i^*)$.  Then Theorem \ref{thm_artin_form} and Theorem \ref{thm_artin} imply that: $\lambda a \overline{\lambda} \equiv \lambda^* a \overline{\lambda^*} \mod{F_{r+1}}$. Rearranging this gives: 
\[
a^{-1}(\overline{\lambda}\lambda^*)^{-1}a(\overline{\lambda} \lambda^*) \equiv 1 \mod F_{r+1}.
\]
Thus, $[a,\overline{\lambda}\lambda^*] \in F_{r+1}$. Since $\lambda,\lambda^*$ have exponent sum zero with respect to $a$, \cite{MKS} Corollary 5.12 (iii), implies that $\overline{\lambda}\lambda^*$ is in $F_{r}$. By Lemma \ref{thm_pq_commutes}, $\phi^{(q)}(\overline{\lambda}_i) \phi^{(q)}(\lambda^*_i) \equiv \phi^{(r)}(\overline{\lambda}_i) \phi^{(r)}(\lambda^*_i) \mod F_{r}$ for all $q \ge r$. This completes the proof.
\end{proof}

\subsection{Extended Artin representation} \label{sec_ex_artin} If $\vec{L}$, $\vec{L}^*$ are concordant $m$-component virtual string link diagrams, then $\Zh(\vec{L}),\Zh(\vec{L}^*)$ are semi-welded concordant $(m+1)$-component virtual string link diagrams. Then Theorem \ref{thm_artin} implies that $A^{(q)}(\Zh(\vec{L}))=A^{(q)}(\Zh(\vec{L}^*))$.

\begin{definition}[Extended Artin representation] The \emph{extended Artin representation} of a virtual string link $\vec{L}$ is the Artin representation of $\Zh(\vec{L})$.
\end{definition}

The extended Artin representation can be computed directly from the extension $\widetilde{G}=\widetilde{G}(\vec{L})$. By Theorem \ref{cor_chen_milnor_string}, $\phi^{(q)}:Q_{q}(\widetilde{G}) \to Q_q(F(m+1))$ is an isomorphism. If $(a_i,\widetilde{\lambda}_i)$ is a meridian-longitude word pair for a component $\vec{K}_i$ of $\vec{L}$ in $\Zh(\vec{L})$, then $A^{(q)}(\Zh(\vec{L}))(a_i)=\phi^{(q)}(\widetilde{\lambda}_i) a_i \phi^{(q)}(\widetilde{\lambda}_i)^{-1}$. Furthermore, $A^{(q)}(\Zh(\vec{L}))(v)=v$, where $v$ is the generator corresponding to the $\vec{\omega}$ component. Specializing to the case of long virtual knots, we make the following definition.

\begin{definition}[extended Artin representation of $\vec{K}$] Let $\vec{K}$ be a long virtual knot diagram and $F=F(2)$ the free group on $a,v$. For $q \ge 1$, the \emph{extended Artin representation} of $\vec{K}$ is the automorphism $A^{(q)}_{\zh}(\vec{K}):Q^{(q)}(F) \to Q^{(q)}(F)$ defined by $A^{(q)}_{\zh}(\vec{K})=A^{(q)}(\Zh(\vec{K}))$.
\end{definition}

\section{Relations $\&$ properties} \label{sec_prop_rel}
\subsection{The generalized Alexander polynomial} For an $m$-component virtual link a multi-variable Alexander polynomial can be defined in terms of its group $G=G(L)$. Recall that the abelianization of $G$, $H_1(G)=G/[G,G]$, is isomorphic to $\mathbb{Z}^m$ via the map $G \to \mathbb{Z}^m$ that sends a meridian of the $i$-th component of $L$ to a generator of the $i$-th factor of $\mathbb{Z}$. The \emph{Alexander invariant} is the abelianization $H_1([G,G])$ regarded as a module over $\mathbb{Z}[\mathbb{Z}^m]$. The group ring $\mathbb{Z}[\mathbb{Z}^{m}]$ is identified with the Laurent polynomial ring $\mathbb{Z}[t_1^{\pm 1},t_2^{\pm 1},\ldots,t_m^{\pm 1}]$. The \emph{Alexander matrix} is the Jacobian formed by taking the Fox derivative of each relation of $G$ with respect to each generator of $G$ and composing with the map sending each generator of the $i$-th component of $L$ to $t_i \in \mathbb{Z}[t_1^{\pm 1},t_2^{\pm 1},\ldots,t_m^{\pm 1}]$. For an $r \times c$ matrix $M$ with entries in $\mathbb{Z}[t_1^{\pm 1},t_2^{\pm 1},\ldots,t_m^{\pm 1}]$, the $k$-\emph{th elementary ideal} $\mathscr{E}_k$ is the ideal generated by the $(c-k) \times (c-k)$ minors of $M$. The $k$-\emph{th multi-variable Alexander polynomial} is the greatest common divisor of the $k$-th codimension minors of the Alexander matrix. When $k=1$, it is called the \emph{multi-variable Alexander polynomial}.

The \emph{generalized Alexander polynomial} was first defined by Jaeger, Kauffman, and Saleur \cite{JKS} as an invariant of knots in thickened surfaces. The original construction was motivated by statistical mechanics. These were later shown to be virtual knot invariants by Sawollek \cite{saw}. Equivalent versions can also be found in Silver-Williams \cite{silwill0,silwill1}, Kauffman-Radford \cite{kauffman_radford}, Crans-Henrich-Nelson \cite{CHN}, and Manturov \cite{manturov_GAP}. In \cite{acpaper}, Boden et al. defined a polynomial $\overline{H}_K(t,v)$ from the elementary ideal theory of the reduced virtual knot group, here denoted as $\widetilde{G}(K)$. Furthermore, they proved (\cite{acpaper}, Theorem 4.1) that the Sawollek polynomial is equivalent to $\overline{H}_K(t,v)$ by a change of variables. In \cite{bbc}, Section 4.2, it was shown that the Alexander invariants of $\widetilde{G}(K)$ are isomorphic to those of $G(\Zh(K))$. In particular, this implies that the multi-variable Alexander polynomial of the two-component semi-welded link $\Zh(K)$ and the generalized Alexander polynomial of $K$ may be obtained from one another by a change of variables.

Here we will show that the vanishing of the $\overline{\zh}$-invariants of $K$ implies the vanishing of the generalized Alexander polynomial of $K$. The proof relies on the following lemma.

\begin{lemma}[\cite{bbc}, Proposition 3.7]\label{lemma_bbc_free_nil_quo} Let $L$ be an $m$-component virtual link (or a link in a thickened surface) and $F$ the free group on $m$ letters. Then $Q_{q}(G(L)) \cong Q_{q}(F)$ for all $q \ge 2$ if and only if all the longitudes of $L$ are in the nilpotent residual $G(L)_{\omega}$.
\end{lemma}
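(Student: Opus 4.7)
The plan is to translate both sides of the biconditional through the Chen--Milnor presentation of Theorem \ref{cor_chen_milnor_links} and then balance a one-step shift between the level of the lower central series of $F$ and the level at which the longitude words survive.

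First I would observe that, under the isomorphism
\[
Q_q(G(L)) \cong F/\langle [a_1,\phi^{(q)}(\lambda_1)],\ldots,[a_m,\phi^{(q)}(\lambda_m)],\,F_q\rangle,
\]
the class of the longitude $\lambda_i\in G(L)$ corresponds to the class of $\phi^{(q)}(\lambda_i)\in F$. Letting $R_q\trianglelefteq F$ be the normal closure of the relators $[a_i,\phi^{(q)}(\lambda_i)]$, the condition $Q_q(G(L))\cong Q_q(F)$ is equivalent to $R_q\subseteq F_q$, and under this identification $\lambda_i\in G(L)_q$ is equivalent to $\phi^{(q)}(\lambda_i)\in F_q$. (The case of links in $\Sigma\times I$ is handled by the same algebraic argument once one notes that its Wirtinger-type presentation is of the required serial form.)

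For the $(\Rightarrow)$ direction I would work at level $q+1$. The relation $[a_i,\phi^{(q+1)}(\lambda_i)]\in F_{q+1}$ combined with the fact that $\phi^{(q+1)}(\lambda_i)$ has exponent sum $0$ in $a_i$ (Lemma \ref{lemma_meridian_longitude}) forces $\phi^{(q+1)}(\lambda_i)\in F_q$ by \cite{MKS}, Corollary~5.12(iii), exactly as in the proof of Corollary \ref{lemma_ab}. Applying Lemma \ref{thm_pq_commutes} letter-by-letter to the word $\lambda_i$ gives $\phi^{(q)}(\lambda_i)\equiv \phi^{(q+1)}(\lambda_i)\pmod{F_q}$, hence $\phi^{(q)}(\lambda_i)\in F_q$, and the first paragraph translates this back to $\lambda_i\in G(L)_q$. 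Since $q$ was arbitrary, $\lambda_i\in G(L)_\omega$.

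For the $(\Leftarrow)$ direction I would proceed by induction on $q$, the case $q=2$ being automatic because both groups have abelianisation $\mathbb{Z}^m$. For the inductive step, the hypothesis $Q_{q-1}(G(L))\cong Q_{q-1}(F)$ together with $\lambda_i\in G(L)_\omega\subseteq G(L)_{q-1}$ gives $\phi^{(q-1)}(\lambda_i)\in F_{q-1}$ via the reformulation above; Lemma \ref{thm_pq_commutes} promotes this to $\phi^{(q)}(\lambda_i)\in F_{q-1}$; and then $[a_i,\phi^{(q)}(\lambda_i)]\in [F,F_{q-1}]\subseteq F_q$ closes the induction. The main obstacle is the off-by-one bookkeeping in $q$, which is precisely why both the compatibility of $\phi^{(q)}$ with $\phi^{(q+1)}$ modulo $F_q$ from Lemma \ref{thm_pq_commutes} and the exponent-sum-zero trick from \cite{MKS} are indispensable; without the longitude hypothesis guaranteeing zero exponent sum in $a_i$, the forward implication would fail.
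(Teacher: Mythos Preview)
The paper does not give its own proof of this lemma: it is quoted verbatim as \cite{bbc}, Proposition~3.7, and used as a black box. So there is nothing in the present paper to compare your argument against.

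That said, your proof is correct and self-contained within the machinery already developed here. A couple of points worth tightening. First, the equivalence ``$Q_q(G(L))\cong Q_q(F)$ iff $R_q\subseteq F_q$'' implicitly uses that finitely generated nilpotent groups are Hopfian: the presentation exhibits $Q_q(G(L))$ as a quotient of $F/F_q$, and an abstract isomorphism forces the quotient map itself to be an isomorphism. You should state this. Second, your claim ``$\lambda_i\in G(L)_q$ is equivalent to $\phi^{(q)}(\lambda_i)\in F_q$'' is literally only correct once $R_q\subseteq F_q$ is known; in general the right-hand side should be $\phi^{(q)}(\lambda_i)\in R_qF_q$. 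This does not affect your argument (in the forward direction you have $R_q\subseteq F_q$ by hypothesis, and in the backward direction by inductive hypothesis), but the phrasing as stated is misleading. With these two clarifications your proof goes through cleanly, and in fact recovers the result using only Theorem~\ref{cor_chen_milnor_links}, Lemma~\ref{thm_pq_commutes}, and the centralizer argument from \cite{MKS} already invoked in Corollary~\ref{lemma_ab}.
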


\begin{theorem}\label{thm_GAP_vanishes} Let $K$ be a virtual knot. If $\overline{\!\zh}_J(K)=0$ for all sequences $J$, then the generalized Alexander polynomial of $K$ is trivial. 
\end{theorem}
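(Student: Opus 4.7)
The reduction is to the two-component welded link $\Zh(K)$. By \cite{bbc}, Section~4.2, the Alexander invariants of $\widetilde{G}(K)$ are isomorphic to those of $G(\Zh(K))$, so the generalized Alexander polynomial of $K$ coincides (up to a change of variables) with the multi-variable Alexander polynomial $\Delta_{\Zh(K)}(t_1,t_2)$. It therefore suffices to prove $\Delta_{\Zh(K)}\equiv 0$ under the hypothesis.

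First I would show that both longitudes of $\Zh(K)$ lie in the nilpotent residual $G(\Zh(K))_{\omega}$. The $\omega$-component has only overcrossings with the $K$-component, so in the Wirtinger-type setup of Theorem~\ref{cor_chen_milnor_links} its conjugacy class is trivially cyclic and its longitude word is the identity. For the $K$-component, the hypothesis $\overline{\zh}_J(K)=0$ for all sequences $J$ says exactly that every Magnus coefficient of $\phi^{(q)}(\widetilde{\lambda})$ vanishes, which by the Magnus criterion forces $\phi^{(q)}(\widetilde{\lambda})\in F(2)_q$ for every $q$. Combined with Lemma~\ref{thm_pq_commutes} and Theorem~\ref{cor_chen_milnor_links}, this yields $\widetilde{\lambda}\in G(\Zh(K))_q$ for every $q$, hence $\widetilde{\lambda}\in G(\Zh(K))_{\omega}$. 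Lemma~\ref{lemma_bbc_free_nil_quo} then delivers $Q_q(G(\Zh(K)))\cong Q_q(F(2))$ for all $q\ge 2$.

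The final step is to deduce $\Delta_{\Zh(K)}\equiv 0$ from this identification. The plan is to compute the Alexander matrix of $\Zh(K)$ from its extended Wirtinger presentation (Figure~\ref{fig_ext_group_rels}) via Fox derivatives, and then to use the isomorphisms $Q_q(G(\Zh(K)))\cong Q_q(F(2))$, together with the Magnus expansion of Section~\ref{sec_gen_chen_milnor}, to identify the first elementary ideal $\mathscr{E}_1$ of $\Zh(K)$ with that of the two-component unlink modulo $I^q$, where $I$ is the augmentation ideal of $\mathbb{Z}[t_1^{\pm 1},t_2^{\pm 1}]$. Since the two-component unlink has trivial Alexander polynomial and $\bigcap_q I^q=0$, this will force $\Delta_{\Zh(K)}\equiv 0$.

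The hard part is this last step. The implication \emph{free nilpotent quotients} $\Rightarrow$ \emph{trivial Alexander polynomial} fails in the classical setting (nontrivial boundary links in $S^3$ are counterexamples), so one cannot rely on a purely group-theoretic principle. Making the argument go through will depend on specific features of the welded category, namely the extended Wirtinger presentation of $\widetilde{G}(K)$ and the ribbon-torus geometry of $\text{Tube}\circ\Zh(K)\subset S^4$, which provide enough control over the Fox derivatives of the relators to transfer the pro-nilpotent identification down to the elementary ideals themselves.
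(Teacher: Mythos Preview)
Your reduction to $\Zh(K)$ and the first two paragraphs are exactly right and match the paper: from $\overline{\zh}_J(K)=0$ for all $J$ you get $\phi^{(q)}(\widetilde{\lambda})\in F(2)_q$ for every $q$, hence $Q_q(G(\Zh(K)))\cong Q_q(F(2))$ and the longitudes lie in $G(\Zh(K))_\omega$.

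The gap is in your final step, and it stems from a misconception. You assert that ``free nilpotent quotients $\Rightarrow$ trivial Alexander polynomial'' fails classically, citing boundary links, and therefore propose a welded-specific Fox-calculus argument you yourself flag as uncertain. But boundary links are \emph{not} counterexamples: for an $m$-component boundary link the longitudes lie in $G_\omega$, and it is a classical theorem (Hillman~\cite{hill}; see also \cite{bbc}, Theorem~3.8, for the version stated for links in thickened surfaces) that if the longitudes of a link lie in $\bigcap_{i\ge 1} G_i G''$, where $G''=[G_2,G_2]$, then the first elementary ideal $\mathscr{E}_1$ is zero. In particular the multi-variable Alexander polynomial of a boundary link vanishes; what can survive is the \emph{single}-variable polynomial of an individual component, which is a different invariant.

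The paper simply invokes this result. Since $G_\omega\subset \bigcap_i G_iG''$, your conclusion that the longitudes of $\Zh(K)$ lie in $G(\Zh(K))_\omega$ already puts them where Hillman's theorem applies, giving $\mathscr{E}_1(\Zh(K))=0$ and hence trivial generalized Alexander polynomial. No special welded or ribbon-torus geometry is needed for this last step; the missing ingredient is purely the citation of \cite{bbc}, Theorem~3.8 (or Hillman~\cite{hill}), after which the proof is complete.
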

\begin{proof} In \cite{bbc}, Theorem 3.8, it was shown that if the longitudes of a virtual link (or a link in a thickened surface) are in $\bigcap_{i=1}^{\infty} G_{i}G''$, where $G=G(L)$ and $G''=[G_2,G_2]$, then the Alexander ideal $\mathscr{E}_1$ is trivial (see also Hillman \cite{hill}). Our theorem is proved by applying this result to $L=\Zh(K)$ and $G=G(\Zh(K))$. Let $\widetilde{\lambda}$ be an extended longitude word of $K$ and $\widetilde{\lambda}^{(q)}=\phi^{(q)}(\widetilde{\lambda})$. Since $\overline{\zh}_J(K)=0$ for all $J$, $\epsilon_J(\widetilde{\lambda}^{(q)})=0$ for all $|J|<q$. This implies that $\widetilde{\lambda}^{(q)} \in F_{q}$. Then by Theorem \ref{cor_chen_milnor_links}, $Q_{q}(G) \cong Q_{q}(F)$, where $F$ is the free group on two letters $a,v$. Lemma \ref{lemma_bbc_free_nil_quo} then implies that the longitudes of $\Zh(L)$ lie in $G_{\omega}$. It follows that the longitudes lie in $\bigcap_{i=1}^{\infty} G_{i}G''$. Since $G_{\omega}<\bigcap_{i=1}^{\infty} G_{i}G''$, the Alexander ideal $\mathscr{E}_1$ is trivial and hence the generalized Alexander polynomial is trivial.
\end{proof}

In \cite{bcg1}, it was proved that the odd writhe \cite{kauffman_odd_writhe}, Henrich-Turaev polynomial \cite{henrich,turaev_cobordism}, and writhe (or affine index) polynomial \cite{cheng_gao,kauffman_affine} are concordance invariants of virtual knots. The following corollary implies that $\overline{\zh}$-invariants are non-vanishing whenever any these invariants is non-vanishing. 

\begin{corollary} \label{cor_index_trivial} Let $K$ be a virtual knot diagram. If $\overline{\!\zh}_J(K)=0$ for all sequences $J$, then the odd writhe, Henrich-Turaev polynomial, and writhe (or affine index) polynomial are all trivial.
\end{corollary}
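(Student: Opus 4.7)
The plan is to deduce this from Theorem \ref{thm_GAP_vanishes} together with known reductions of each of the three listed invariants to the generalized Alexander polynomial. First I would apply Theorem \ref{thm_GAP_vanishes} directly: the hypothesis $\overline{\zh}_J(K)=0$ for all sequences $J$ immediately yields that the generalized Alexander polynomial $\overline{H}_K(t,v)$ of $K$ is trivial. So the real content of the corollary is that the three remaining invariants are determined by (or are specializations of) the generalized Alexander polynomial.

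Next I would handle the writhe (affine index) polynomial $W_K(t)$. The plan is to cite the known identification of $W_K(t)$ with a specialization of the generalized Alexander polynomial (the relevant fact, due to Kauffman and made explicit by Sawollek and by Boden--Chrisman--Gaudreau, is that $W_K(t)$ can be recovered from the coefficients of $\overline{H}_K(t,v)$, typically via an appropriate evaluation in one variable). From triviality of $\overline{H}_K(t,v)$ one then reads off that $W_K(t)$ is trivial. The odd writhe then follows immediately: it is $W_K(-1)$ (up to a standard normalization) in Kauffman's formulation, so vanishing of $W_K(t)$ forces the odd writhe to be zero.

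For the Henrich--Turaev polynomial, the plan is analogous: this polynomial is also obtained as a crossing-weighted sum determined by the index/intersection data of the knot, and it is a classical fact (see Henrich, Turaev, and the exposition in \cite{bcg1}) that it is determined by the same Alexander-type data encoded in $\overline{H}_K(t,v)$. Hence its triviality also follows from vanishing of the generalized Alexander polynomial.

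The main (minor) obstacle is purely a matter of citation rather than argument: one needs to assemble the references in the virtual knot literature that exhibit each of the three invariants as a consequence of $\overline{H}_K(t,v)$. Once these are in place, no further computation is required beyond chaining the implications $\overline{\zh}_J(K)\equiv 0 \;\Rightarrow\; \overline{H}_K(t,v)\equiv 0 \;\Rightarrow\;$ triviality of the writhe polynomial, the Henrich--Turaev polynomial, and (as a specialization of the writhe polynomial) the odd writhe.
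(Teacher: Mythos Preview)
Your approach is correct and essentially identical to the paper's: apply Theorem~\ref{thm_GAP_vanishes} to kill the generalized Alexander polynomial, then invoke the known fact that this polynomial determines the other three invariants. The only difference is bibliographic: the paper cites a single result of Mellor \cite{mellor} establishing that the generalized Alexander polynomial determines the odd writhe, the writhe polynomial, and the Henrich--Turaev polynomial, whereas your scattered attributions (Kauffman, Sawollek, \cite{bcg1}) are not quite the right references for this implication.
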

\begin{proof} By Mellor \cite{mellor}, the generalized Alexander polynomial determines the odd writhe, writhe polynomial, and Henrich-Turaev polynomial. The result then follows from Theorem \ref{thm_GAP_vanishes}.
\end{proof}

\subsection{Almost classical knots} A homologically trivial knot $K \subset \Sigma \times I$ bounds a Seifert surface in $\Sigma \times I$. If a virtual knot has a homologically trivial representative in some thickened surface, then $K$ is said to be \emph{almost classical (AC)}. The original definition, due to Silver-Williams \cite{silwill}, is that a virtual knot is almost classical if it has a diagram with an Alexander numbering. The two definitions are equivalent (Boden et. al \cite{acpaper}). Of the 92800 virtual knots up to six classical crossings, 77 are almost classical and 10 are classical (Boden et al. \cite{acpaper}). However, there are many virtual knots that are not almost classical but are concordant to an AC knot. Up to six classical crossings, there are exactly 19 slice AC knots and at least 1281 non-AC slice knots \cite{bbc,bcg1,bcg2}. Furthermore, the Tristam-Levine signature functions were generalized to almost classical knots in \cite{bcg2}. From these one may obtain lower bounds on the topological slice genus of homologically trivial knots in thickened surfaces. Thus, it is of interest to find obstructions to a virtual knot being concordant to an AC knot. 

In \cite{bbc}, Boden and the author showed that every virtual knot concordant to an AC knot has vanishing generalized Alexander polynomial. Here we use a similar method to show that the $\overline{\zh}$-invariants vanish on any virtual knot concordant to an AC knot. Recall that a link $\mathscr{L} \subset \Sigma \times I$ is said to be a \emph{boundary link} if the components of $\mathscr{L}$ bound pairwise disjoint Seifert surfaces. 

\begin{theorem} If $K$ is concordant to an AC knot, then all $\overline{\zh}$-invariants of $K$ vanish.
\end{theorem}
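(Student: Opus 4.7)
The plan is to reduce to the almost classical case via concordance invariance of Milnor's invariants, and then to show that for an AC knot the extended longitude bounds in a Seifert $3$-manifold disjoint from the tube of $\omega$, forcing it into the nilpotent residual of the free group.

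First I would handle the reduction. If $K$ is concordant to an AC knot $K^*$, then Theorem \ref{thm_semi_weld_conc} produces a semi-welded (hence welded) concordance between $\Zh(K)$ and $\Zh(K^*)$. Applying Theorem \ref{thm_mubar_links} to the two-component welded links $\Zh(K)$ and $\Zh(K^*)$ gives
\[
\bar\mu_{J|1}(\Zh(K)) \equiv \bar\mu_{J|1}(\Zh(K^*)) \pmod{\Delta_{J|1}},
\]
so by the definition of $\overline{\zh}_J$ it suffices to prove $\overline{\zh}_J(K^*)=0$ for every AC knot $K^*$ and every sequence $J$.

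For the AC case I would work in the ribbon torus picture. The corollary following Theorem \ref{thm_zh} yields $\widetilde G(K^*) \cong \pi_1(S^4 \smallsetminus \text{Tube}(\Zh(K^*)))$; write $\text{Tube}(\Zh(K^*)) = T_{K^*} \cup T_\omega$ for the two ribbon torus components. By the argument of Lemma \ref{lemma_tube_homotopy} applied to the welded link $\Zh(K^*)$, the extended longitude $\widetilde\lambda$ corresponds under this isomorphism to a longitude of $T_{K^*}$. Since $K^*$ is AC, fix a homologically trivial representative $\mathcal K^* \subset \Sigma \times I$ and a Seifert surface $F \subset \Sigma \times I$ with $\partial F = \mathcal K^*$. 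The core geometric step is to realize $\Zh(K^*)$ by a diagram whose $\omega$-component, after applying the Tube map, produces $T_\omega$ disjoint from the compact orientable $3$-manifold $V_{K^*} \subset S^4$ obtained by thickening $F$. Together with a handlebody bounded by $T_\omega$, this exhibits $(T_{K^*}, T_\omega)$ as a boundary link of ribbon tori in $S^4$.

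With the boundary link structure in hand, I would invoke a Stallings-type argument. Collapsing each Seifert $3$-manifold and sending the two families of meridians to distinct free generators yields a map $S^4 \smallsetminus (T_{K^*} \cup T_\omega) \to S^1 \vee S^1$ that is an isomorphism on $H_1$ and a surjection on $H_2$. By Lemma \ref{thm_stallings}, this map induces isomorphisms on all nilpotent quotients, and because the longitude of $T_{K^*}$ bounds in $V_{K^*}$ its image in $F(2)$ is trivial. Hence $\phi^{(q)}(\widetilde\lambda) \in F(2)_q$ for every $q \geq 1$, and therefore $\overline{\zh}_J(K^*) = \epsilon_J(\phi^{(q)}(\widetilde\lambda)) = 0$ for every sequence $J$.

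The hard part will be the disjointness step: arranging $\omega$ so that $T_\omega$ misses the thickening $V_{K^*}$ of $F$. The curve $\omega$ is built by inserting an overarc at every classical crossing of the diagram, and a generic Seifert surface will cross these overarcs transversally. The almost classical hypothesis is essential here: the Alexander numbering of an AC diagram provides consistent integer labels on arcs and a compatible local rule for pushing $F$ off $\omega$ at each crossing. Executing this local adjustment coherently in $\Sigma \times I$ and transporting it through the Tube map so that $V_{K^*}$ becomes an embedded Seifert $3$-manifold for $T_{K^*}$ disjoint from $T_\omega$ in $S^4$ is the essential technical content of the theorem.
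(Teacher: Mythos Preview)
Your overall architecture is right: reduce to the AC case by concordance invariance, establish that $\Zh$ of an AC knot is a boundary link, and deduce that the extended longitude lies in the nilpotent residual. But you have made the ``hard part'' much harder than it needs to be, and in doing so have left the key step as an unproved claim.

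The paper avoids $S^4$ entirely for the AC step. The decisive input is \cite{bbc}, Theorem~4.5: for an AC knot $C$, the semi-welded link $\Zh(C)$ is \emph{split}. That is, at the diagram level one can use the Alexander numbering to detour the $\omega$ component completely away from $C$. Once $\Zh(C)$ is split, a representative in some $\Sigma\times I$ is trivially a boundary link ($C$ bounds its Seifert surface, the unknotted $\omega$ bounds a disc, disjointly). Then \cite{bbc}, Theorem~3.5 gives $Q_q(G(\Zh(C)))\cong Q_q(F)$, and Lemma~\ref{thm_bbc_nil_quo} together with Lemma~\ref{lemma_bbc_free_nil_quo} transports this back to $K$, forcing the extended longitudes into $G(\Zh(K))_\omega$. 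Your disjointness-in-$S^4$ programme is essentially a reinvention of the splitting result, but carried out in the harder $4$-dimensional setting; and the sentence ``obtained by thickening $F$'' hides a nontrivial construction, since $F$ lives in $\Sigma\times I$ while $T_{K^*}$ lives in $S^4$, and the Tube map does not transport Seifert surfaces in any obvious way.

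A smaller point: your reduction via Theorem~\ref{thm_mubar_links} only gives equality modulo $\Delta_{J|1}$, so strictly you need an induction on $|J|$ to conclude that all $\overline{\zh}_J(K)$ vanish as integers. The paper instead uses Lemma~\ref{thm_bbc_nil_quo} directly on nilpotent quotients (with longitudes preserved), which bypasses this bookkeeping.
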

\begin{proof} Suppose that $K$ is concordant to an almost classical knot $C$. By \cite{bbc}, Theorem 4.5, $\Zh(C)$ is a split two component semi-welded link. Since the $C$ is AC and the $\omega$ component of $\Zh(C)$ is the unknot, it follows that $\Zh(C)$ can be represented in some thickened surface $\Sigma \times I$ as a boundary link. By \cite{bbc}, Theorem 3.5, the nilpotent quotients of the group of a boundary link are isomorphic to those of a free group. Since $\Zh(K)$ and $\Zh(C)$ are concordant, Lemma \ref{thm_bbc_nil_quo} implies that: 
\[
Q_{q}(G(\Zh(K))) \cong Q_{q}(G(\Zh(C))) \cong Q_{q}(F),
\]
where $F=F(2)$ is the free group on two letters. Then Lemma \ref{lemma_bbc_free_nil_quo} implies that the longitudes of $\Zh(K)$ are in $G(\Zh(K))_{\omega}$. For any extended longitude word $\widetilde{\lambda}$ of $K$, $\phi^{(q)}(\widetilde{\lambda}) \in F_{q}$ for all $q\ge 2$. Hence, $\epsilon_J(\widetilde{\lambda}^{(q)})=0$ for any $q$ and sequence $J$ with $|J|<q$. 
\end{proof}

\subsection{Shuffle and cycle relations} \label{sec_shuffles} Milnor \cite{milnor} noted redundancies in the $\bar{\mu}$-invariants for links in $S^3$. Two relations between them are: (1) the shuffle relation, and (2) the cycle relation. Here we discuss the extent to which these hold true for virtual links. Let $J_1=j_{1,1}j_{1,2}\cdots j_{1,r}$ and $J_2=j_{2,1}j_{2,2}\cdots j_{2,s}$ be sequences. A sequence $J$ is said to be \emph{shuffle} of $J_1$ and $J_2$ if:
\begin{enumerate}
\item $J_1$ and $J_2$ are embedded in $J$ as subsequences, and
\item $J$ is the union $J_1$ and $J_2$.
\end{enumerate}
The embeddings of $J_1$ and $J_2$ into $J$ are part of the data of a shuffle, so that different embeddings into the same sequence $J$ represent different shuffles. The set of shuffles of $J_1,J_2$ will be denoted $S(P_1,P_2)$. A shuffle $J$ of $J_1$ and $J_2$ is \emph{proper} if the subsequences $J_1$ and $J_2$ of $J$ are disjoint in $J$. The set of proper shuffles of $J_1,J_2$ will be denoted $PS(J_1,J_2)$. The definition of shuffle and proper shuffle given here coincide with those of Milnor \cite{milnor}(see pages 41-42). The following proposition is merely a restatement of Milnor \cite{milnor}, Theorem 6, in terms of virtual links.

\begin{proposition} Let $L$ be an $m$-component virtual link, $J_1,J_2$ be sequences, and $1 \le k \le m$. Then:
\[
\sum_{J\in PS(J_1,J_2)} \bar{\mu}_{J|k}(L) \equiv 0 \pmod{\gcd\{\Delta_{J|k}:J\in PS(J_1,J_2)\}}.
\]
\end{proposition}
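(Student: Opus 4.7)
The plan is to deduce the proposition directly from Chen's shuffle identity for Magnus expansions, applied to the longitude words that compute the $\bar\mu$-invariants. Nothing virtual-specific is needed beyond the setup of $\bar\mu$ in Section \ref{sec_mu}; the shuffle relation is a purely algebraic identity in the free group, and the classical Milnor argument transfers essentially verbatim.

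First I would recall the shuffle identity in a form convenient for the application. View the Magnus algebra $\mathbb{Z}\llbracket Y\rrbracket$ as a complete Hopf algebra with primitive generators $\Delta(a_i)=a_i\otimes 1+1\otimes a_i$, so that the Magnus expansion $\epsilon:F\to \mathbb{Z}\llbracket Y\rrbracket^{\times}$ takes values in the group-likes: $\Delta\epsilon(w)=\epsilon(w)\otimes\epsilon(w)$ for every $w\in F$. Expanding $\Delta(a_J)$ as a sum over ordered partitions of the positions of the sequence $J$ identifies those partitions with proper shuffles in the sense of the paper, and comparing the coefficient of $a_{J_1}\otimes a_{J_2}$ on the two sides of $\Delta\epsilon(w)=\epsilon(w)\otimes\epsilon(w)$ yields the integer identity
\[
\epsilon_{J_1}(w)\,\epsilon_{J_2}(w)\;=\sum_{J\in PS(J_1,J_2)}\epsilon_J(w),\qquad w\in F.
\]

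Next I would specialize $w=\lambda_k^{(q)}:=\phi^{(q)}(\lambda_k)\in F$ for any $q>|J_1|+|J_2|$. By Lemma \ref{thm_pq_commutes}, the Magnus coefficients $\epsilon_J(\lambda_k^{(q)})$ for $|J|<q$ are independent of such a $q$ and equal $\mu_{J|k}(L)$. Substituting converts the above identity into the integer equality
\[
\mu_{J_1|k}(L)\cdot\mu_{J_2|k}(L)\;=\sum_{J\in PS(J_1,J_2)}\mu_{J|k}(L).
\]

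To finish, I would verify that the left-hand product is divisible by $D:=\gcd\{\Delta_{J|k}:J\in PS(J_1,J_2)\}$. The key observation is that, for every $J\in PS(J_1,J_2)$, deleting from $J|k$ the positions occupied by the embedded copy of $J_2$ (and retaining the terminal $k$) returns exactly $J_1|k$, with no cyclic permutation needed. Thus $J_1|k$ is among the subsequences $\hat J$ entering the definition of $\Delta_{J|k}$, so $\Delta_{J|k}$ divides the integer $\mu_{J_1|k}(L)$. Since this holds for every $J\in PS(J_1,J_2)$, the gcd $D$ also divides $\mu_{J_1|k}(L)$ and therefore divides the product $\mu_{J_1|k}(L)\cdot\mu_{J_2|k}(L)$. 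Reducing the displayed equality modulo $D$ then yields the desired congruence $\sum_{J\in PS(J_1,J_2)}\bar\mu_{J|k}(L)\equiv 0\pmod D$. The only real obstacle is bookkeeping: one must check that $J_1|k$ (or, symmetrically, $J_2|k$) is literally among the subsequences used to define $\Delta_{J|k}$, which becomes immediate once proper shuffles are unpacked and noted to preserve the terminal entry $k$.
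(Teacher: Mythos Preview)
Your proof is correct and follows essentially the same route as the paper: apply the Chen--Fox--Lyndon shuffle identity to the longitude word $\lambda_k^{(q)}$, then observe that the resulting product $\mu_{J_1|k}(L)\cdot\mu_{J_2|k}(L)$ vanishes modulo the indeterminacy. Your write-up is in fact more detailed than the paper's, which simply cites Chen--Fox--Lyndon for the identity and asserts without elaboration that the product ``vanishes modulo the indeterminacy''; your Hopf-algebra derivation of the shuffle identity and your explicit observation that $J_1|k$ arises from each $J|k$ by deleting the embedded $J_2$ make both steps transparent.
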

\begin{proof} The proof follows exactly as in the case of classical links. We briefly outline the argument. Choose a longitude word $\lambda_k^{(q)}$ for $q$ sufficiently large (see Section \ref{sec_mu}). By Chen-Fox-Lyndon \cite{chen_fox_lyndon}, Lemma 3.3, it follows that the sum of the proper shuffles of $J_1$ and $J_2$ of the corresponding Magnus expansion coefficients is $\varepsilon_{J_1}\left(\lambda_k^{(q)}\right) \cdot \varepsilon_{J_2}\left(\lambda_k^{(q)}\right)$. As this vanishes modulo the indeterminacy, the result follows.  
\end{proof}

As an application of the shuffle relation, we give spanning sets of $\overline{\zh}$-invariants for small $|J|$. By a \emph{spanning set} for the invariants of order $n$, we mean a set $\mathscr{S}_n=\{\overline{\zh}_{J_1},\ldots,\overline{\zh}_{J_z}\}$ with $|J_1|=\cdots=|J_z|=n$ such that every $\overline{\zh}_J$ can be written as an integral linear combination of elements of $\mathscr{S}_n$.  

\begin{proposition} \label{thm_spanning} Let $n \in \mathbb{N}$, $2 \le n \le 8$, and let $K$ be a virtual knot. Suppose that $\overline{\zh}_J(K)=0$ for all $J$ with $|J|<n$. Then there is a spanning set $\mathscr{S}_n$ of $\overline{\zh}$-invariants for $K$ having at most $E_n$ elements, where $E_n$ is given below. Spanning sets $\mathscr{S}_n$ with $|\mathscr{S}_n|=E_n$ are in Table \ref{table_spanning}.
\[
\begin{tabular}{|c|c|c|c|c|c|c|c|c|c|} \hline
n     & 2 & 3 & 4 & 5 & 6 & 7 & 8  \\ \hline
$E_n$ & 1 & 2 & 3 & 6 & 9 & 18 & 30 \\\hline
\end{tabular}
\] 
\end{proposition}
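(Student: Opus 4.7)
The plan is to pass to the Magnus expansion of the extended longitude word and exploit the Magnus--Witt dimension count for the free nilpotent quotients of $F=F(2)$. Throughout, fix $q>n$ and write $\widetilde{\lambda}^{(q)}=\phi^{(q)}(\widetilde{\lambda})$. The hypothesis $\overline{\zh}_J(K)=0$ for $|J|<n$ translates via the definition of the invariants into $\epsilon_J(\widetilde{\lambda}^{(q)})=0$ for every sequence $J$ in $\{1,2\}$ of length less than $n$. By the standard lower central series criterion for the Magnus expansion (Fenn, Corollary 4.4.5), this forces $\widetilde{\lambda}^{(q)}\in F_n$. The same vanishing makes the indeterminacy $\Delta_{J|1}$ equal to $0$ for every $|J|=n$, since each $\mu_{\widehat{J|1}}(\Zh(K))$ entering $\Delta_{J|1}$ comes from a proper subsequence of length less than $n$; thus the residue classes $\overline{\zh}_J(K)$ with $|J|=n$ are honest integers.

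Next I would invoke the Magnus--Witt theorem: the abelian group $F_n/F_{n+1}$ is free of rank
\[
W_n(2)=\frac{1}{n}\sum_{d\mid n}\mu(d)\,2^{n/d},
\]
and one checks directly that $W_n(2)=E_n$ for $2\le n\le 8$. Since $\widetilde{\lambda}^{(q)}\in F_n$, each coefficient $\epsilon_J$ with $|J|=n$ descends to a homomorphism $F_n/F_{n+1}\to\mathbb{Z}$, so the $2^n$ functionals $\{\epsilon_J:|J|=n\}$ span a sublattice of $\mathrm{Hom}(F_n/F_{n+1},\mathbb{Z})$ of rank at most $E_n$. Choosing $E_n$ sequences $J_1,\dots,J_{E_n}$ for which $\{\epsilon_{J_i}\}$ forms a $\mathbb{Z}$-basis of this sublattice, every $\epsilon_J$ with $|J|=n$ is an integer linear combination of $\epsilon_{J_1},\dots,\epsilon_{J_{E_n}}$ on $F_n/F_{n+1}$; evaluating at $\widetilde{\lambda}^{(q)}$ delivers the required spanning relations among the $\overline{\zh}$-invariants.

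The step I expect to be most delicate is exhibiting the explicit spanning sets displayed in Table \ref{table_spanning} in each of the seven cases. Two natural routes are available. One can fix a Hall (or Lyndon) basis of the free Lie algebra on two generators in degree $n$, match each basis element to a leading word via the PBW theorem, and take the dual family of Magnus coefficient functionals. Alternatively, one can work purely combinatorially from the proper shuffle identities of the previous proposition: under the vanishing hypothesis the product term $\epsilon_{J_1}(\widetilde{\lambda}^{(q)})\cdot\epsilon_{J_2}(\widetilde{\lambda}^{(q)})$ is zero whenever $|J_1|,|J_2|\ge 1$ sum to $n$, so the proposition collapses to the homogeneous linear relations $\sum_{J\in PS(J_1,J_2)}\overline{\zh}_{J|1}(K)=0$. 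Gaussian elimination on this finite system in each degree then identifies $E_n=W_n(2)$ sequences that span, matching the entries of Table \ref{table_spanning}.
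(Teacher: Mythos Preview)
Your argument is correct and takes a genuinely different, more conceptual route than the paper. The paper proceeds entirely through the shuffle relation of the preceding proposition: under the vanishing hypothesis each shuffle identity becomes a homogeneous integer linear relation among the $\overline{\zh}_J(K)$ with $|J|=n$, and these relations are written out and reduced---by hand for $n=2,3$ and by \emph{Mathematica} for $4\le n\le 8$---to extract the spanning sets of Table~\ref{table_spanning}. No mention is made of Witt's formula or of the lower central quotients of $F(2)$.

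Your approach instead identifies $E_n$ with the Witt number $W_n(2)=\mathrm{rank}_{\mathbb{Z}}(F_n/F_{n+1})$, which explains \emph{a priori} why the table reads $1,2,3,6,9,18,30$ and would extend the existence statement to all $n$, not just $n\le 8$. The trade-off is that the rank bound alone does not immediately yield a subset of $E_n$ sequences that $\mathbb{Z}$-spans: a finite subset of a rank-$r$ lattice need not contain an $r$-element generating set. You correctly anticipate this in your final paragraph; the Lyndon-word route works because the standard bracketing of a Lyndon word $\ell$ has leading monomial $\ell$ with coefficient $1$, giving a unitriangular change-of-basis matrix and hence an honest $\mathbb{Z}$-basis among the $\epsilon_J$. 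The shuffle-relation route you sketch as the alternative is exactly the paper's proof. Either way, matching the specific sequences listed in Table~\ref{table_spanning} (which are not the Lyndon words in the usual order) still requires the explicit linear algebra the paper carries out.
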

\begin{proof} Apply the shuffle relation to $\bar{\mu}_{J|1}(\Zh(K))$. Since all invariants of order less than $|J|$ are vanishing, each shuffle relation gives an equation over the integers. If $J_1,J_2$ are either both all ones or all twos, then the shuffle relation implies that $\bar{\mu}_{11\cdots 11}(K)=\bar{\mu}_{22\cdots 22}(K)=0$. Then for $n=2$, the only relation is that $\overline{\zh}_{12} (K)+\overline{\zh}_{21}(K)=0$.  This establishes the claim for $n=2$. For $n=3$, the shuffle relations are:
\begin{align*}
\overline{\zh}_{112}(K)+\overline{\zh}_{121}(K)+\overline{\zh}_{211}(K) &=0, \\
\overline{\zh}_{121}(K)+\overline{\zh}_{112}(K)+\overline{\zh}_{112}(K) &=0, \\
\overline{\zh}_{122}(K)+\overline{\zh}_{122}(K)+\overline{\zh}_{212}(K) &=0, \\
\overline{\zh}_{211}(K)+\overline{\zh}_{211}(K)+\overline{\zh}_{121}(K) &=0, \\
\overline{\zh}_{212}(K)+\overline{\zh}_{221}(K)+\overline{\zh}_{221}(K) &=0, \\
\overline{\zh}_{221}(K)+\overline{\zh}_{212}(K)+\overline{\zh}_{122}(K) &=0. 
\end{align*}
Solving these equations over the integers implies that every invariant $\overline{\zh}_J(K)$ is in the integral span of $\mathscr{S}_2=\{\overline{\zh}_{211}, \overline{\zh}_{221}\}$. For $4 \le n \le 8$, \emph{Mathematica} was used to write out all of the shuffle relations and reduce (over $\mathbb{Z}$) the resulting system of equations. 
\end{proof}

\begin{table}
\[\small
\begin{array}{|c|cccccc|} \hline
\mathscr{S}_2 & 21  &        &        &       &       & \\ \hline
\mathscr{S}_3 & 211 &  221 &        &       &       & \\ \hline
\mathscr{S}_4 & 2111  & 2211  & 2221  &       &       &  \\ \hline
\mathscr{S}_5 & 21111 & 21211 & 22111 & 22121 & 22211 & 22221 \\ \hline
\mathscr{S}_6 & 211111 & 212111 & 212211 & 221111 & 221211 & 222111 \\
& 222121 & 222211 & 222221 &   &   &   \\ \hline
\mathscr{S}_7 & 2111111 & 2112111 & 2121111 & 2121211 & 2122111 & 2122211 \\
& 2211111 & 2211211 & 2212111 & 2212121 & 2212211 & 2221111 \\
& 2221211 & 2221221 & 2222111 & 2222121 & 2222211 & 2222221 \\ \hline
\mathscr{S}_8 & 21111111 & 21121111 & 21122111 & 21211111 & 21211211 & 21212111 \\
& 21212211 & 21221111 & 21221211 & 21222111 & 21222211 & 22111111 \\
& 22112111 & 22121111 & 22121211 & 22122111 & 22122121 & 22122211 \\
& 22211111 & 22211211 & 22212111 & 22212121 & 22212211 & 22221111 \\
& 22221211 & 22221221 & 22222111 & 22222121 & 22222211 & 22222221 \\ \hline
\end{array}
\]
\caption{Spanning sets for $\overline{\zh}$-invariants of  order up to 8.} \label{table_spanning}
\end{table}

For a link $\mathscr{L}\subset S^3$, the cycle relation says that $\bar{\mu}_{j_1j_2\cdots j_s}(\mathscr{L})=\bar{\mu}_{j_sj_1j_2\cdots j_{s-1}}(\mathscr{L})$. For a $2$-component link $\mathscr{L}=\mathscr{J} \cup \mathscr{K}$, this recovers the familiar symmetric property of the linking number: $\lk(\mathscr{J},\mathscr{K})=\bar{\mu}_{12}(\mathscr{L})=\bar{\mu}_{21}(\mathscr{L})=\lk(\mathscr{K},\mathscr{J})$. Next we will show that the cycle relation does not hold in general for virtual links by relating the $\bar{\mu}$-invariants to the virtual linking number (see also Dye-Kauffman \cite{dye_kauffman}, Theorem 6.1). Indeed, as discussed in Section \ref{sec_vlk}, the virtual linking number is not symmetric.

\begin{proposition} \label{prop_linking_number} Let $L=J \cup K $ be a $2$-component virtual link diagram. Then:
\[
\bar{\mu}_{12}(J\sqcup K)=\vlk(J,K).
\]
In particular, for any virtual knot $K$, $\overline{\!\zh}_2(K)=0$.
\end{proposition}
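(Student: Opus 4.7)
The plan is to unwind the first-order Magnus coefficient and recognize it directly as a signed overcrossing count. First I would note that for the sequence $J=12$ the indeterminacy is trivial: the only proper subsequences have length $\le 1$, which are too short to define $\mu$-invariants in the framework of Section \ref{sec_mu}, so $\Delta_{12}=0$ and $\bar\mu_{12}(L)=\mu_{12}(L)$ is an integer. By definition, $\mu_{12}(L)=\epsilon_1\bigl(\lambda_2^{(q)}\bigr)$ for any $q\ge 3$, where $\lambda_2$ is a longitude word for the component $K$. After using $\Omega 1$ moves to normalize the writhe so that the chosen parallel is a longitude, I would read off $\lambda_2$ directly from the diagram as in Section \ref{sec_longitudes}: traverse a longitude $\ell$ of $K$ and record $z^{\pm 1}$ for each arc $z$ overcrossing $\ell$, the sign being the crossing sign.

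The crux is then to compute $\epsilon_1\bigl(\phi^{(q)}(\lambda_2)\bigr)$. At first order, $\epsilon_1(w)$ equals the total exponent sum of $a_1$ in $w$ viewed in $F(2)=F(a_1,a_2)$. The recursion $p^{(q)}$ replaces each $c_{i,j+1}$ by a conjugate of $c_{i,1}$ (iteratively), so $p^{(q)}$ preserves the exponent sum of every generator; the collapse $\phi$ then sends every short-arc generator of $J$ to $a_1$ and every short-arc generator of $K$ to $a_2$. Consequently the exponent sum of $a_1$ in $\phi^{(q)}(\lambda_2)$ is exactly the signed count of overcrossings by arcs of $J$ over the longitude $\ell$. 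Since $\ell$ lies in a ribbon neighborhood of $K$ that is disjoint from $J$, this count equals $\vlk(J,\ell)=\vlk(J,K)$, which proves the first equality.

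For the second statement, I would specialize to the two-component semi-welded link $\Zh(K)=K\cup\omega$ with $K$ as component $1$ and $\omega$ as component $2$. By definition $\overline{\zh}_2(K)=\mu_{21}(\Zh(K))$, and by the argument above (with the roles of the two components interchanged) this equals $\vlk(\omega,K)$. It remains to verify $\vlk(\omega,K)=0$: at each classical crossing of $K$, the $\Zh$ construction inserts $\omega$ as a single overarc passing over both incident strands (Figure \ref{fig_zh_defn}), and a quick inspection using $\operatorname{sgn}\det[\vec t_{\mathrm{over}},\vec t_{\mathrm{under}}]$ for the crossing sign shows that the two new $\omega$-over-$K$ crossings receive opposite signs (the two strands at the crossing have tangent vectors whose components transverse to $\omega$ point in opposite senses). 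These contributions cancel in pairs, so $\vlk(\omega,K)=0$ and $\overline{\zh}_2(K)=0$. The main obstacle is just bookkeeping: checking that $\phi^{(q)}$ loses no exponent-sum information at order one (immediate because $p^{(q)}$ is composed of conjugations and $\phi$ is homogeneous on each conjugacy class), and confirming that the $\Omega 1$ normalization does not alter $\vlk(J,K)$ (clear, since $\Omega 1$ only modifies self-crossings of a single component).
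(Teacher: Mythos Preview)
Your proposal is correct and follows essentially the same route as the paper: identify $\bar\mu_{12}$ with the coefficient of $a_1$ in the Magnus expansion of the longitude word of $K$, recognize that coefficient as the signed exponent sum of $J$-type generators (which $\phi^{(q)}$ preserves), and read that off as $\vlk(J,K)$; then for $\overline{\zh}_2(K)$ observe that the two $\omega$-over-$K$ crossings introduced at each classical crossing of $K$ carry opposite signs. Your write-up is in fact a bit more careful than the paper's (you justify $\Delta_{12}=0$, the exponent-sum invariance under $p^{(q)}$, and the harmlessness of the $\Omega 1$ normalization), but the argument is the same.
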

\begin{proof} Set $G=G(L)$ and let $\lambda$ be a longitude word of $K$. In $\mathbb{Z}[[a_1,a_2]]$, $\bar{\mu}_{12}$ is the coefficient of $a_1$ in the Magnus expansion of $\phi^{(2)}(\lambda)$. Contributions to this coefficient are identified with the exponents of $a_{1,j}$ in the arcs of $J$. Each overcrossing of $J$ with $K$ introduces a letter with exponent $\pm 1$, according to whether the crossing is signed positively or negatively, respectively. The contribution to the coefficient of $a_1$ is $+1$ for each positive crossing of $J$ over $K$ and $-1$ for each negative crossing of $J$ over $K$. Thus, $\bar{\mu}_{12}(J\sqcup K)=\vlk(J,K)$.

For the second claim, recall that $\overline{\zh}_2(K)=\bar{\mu}_{21}(\Zh(K))$. By the first claim, $\bar{\mu}_{21}(\Zh(K))=\bar{\mu}_{21}(K \sqcup \omega)=\vlk(\omega,K)$. Since $\omega$ has two overcrossings with $K$ for every crossing of $K$, one of which is signed $+1$ and the other of which is signed $-1$ (see Figure \ref{fig_zh_defn}), it follows that $\vlk(\omega,K)=0$.
\end{proof}

\section{Calculations $\&$ applications} \label{sec_calc_app}
Extended Artin representations and $\overline{\zh}$-invariants can be calculated from a Gauss diagram. Section 7.1 gives a brief review of Gauss codes and Gauss diagrams of virtual knots. Section \ref{sec_manual} illustrates an example manual calculation of the first non-vanishing $\overline{\zh}$-invariants for the virtual knot $3.5$. In the remaining sections, these calculations are done with the aid of a computer. Section \ref{sec_slice_obstructions} applies the $\overline{\zh}$-invariants to the virtual knots of unknown slice status. In Section \ref{sec_vc}, the extended Artin representation is used to prove that $\mathscr{VC}$ is not abelian.  Section \ref{sec_beyond} discusses the $\overline{\zh}$-invariants beyond the first non-vanishing order. Section \ref{sec_movies} gives slice movies for 22 virtual knots of previously unknown slice status.

\subsection{Gauss diagrams} A virtual knot diagram $K$ is an immersion $S^1 \to \mathbb{R}^2$ that is decorated with classical and virtual crossings. A \emph{Gauss diagram} of $K$ can be obtained by connecting the pre-images of the classical crossings by an arrow in $S^1$ that points from the overcrossing arc to the undercrossing arc. The sign of each crossing is marked near the corresponding arrow: $\oplus$ for right-handed crossings and $\ominus$ for left-handed ones. An example for $K=3.5$ is given in Figure \ref{fig_gauss}. To obtain the \emph{Gauss code} of a Gauss diagram, mark a base point on the Gauss diagram and label the arrows successively while circumnavigating $S^1$. At each arrow endpoint, write an ``O'' for each arrow tail, a ``U'' for each arrow head, the arrow label, and the sign $(\pm)$ of the crossing. Repeat this process until all of the arrow endpoints are passed. See Figure \ref{fig_gauss}, center. 

\begin{figure}[htb]
\centerline{
\begin{tabular}{ccc} \begin{tabular}{c}
\def\svgwidth{1in}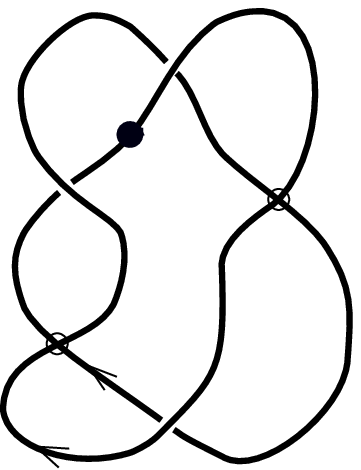 \end{tabular} & \begin{tabular}{c} $\text{O1-O2-O3-U1-U2-U3-}$\end{tabular} & \begin{tabular}{c}
\def\svgwidth{1.4in}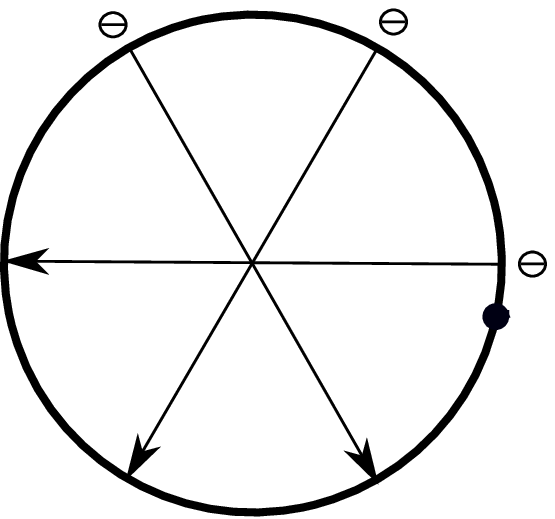 \end{tabular}    \end{tabular}
}
\caption{The Gauss code (center) and Gauss diagram (right) of $K=3.5$ (left).} \label{fig_gauss}
\end{figure}

Each of the Reidemeister moves may be translated into Gauss diagrams. Figure \ref{fig_gd_moves} shows one type of Gauss diagram move for each of the three Reidemeister moves. For a full list, the reader is referred to Polyak \cite{polyak_minimal}. For example, there are eight different types $\Omega 3$ moves. Using the notation of \cite{polyak_minimal}, they are: $\Omega 3a$, $\Omega 3b$, $\Omega 3c$, $\Omega 3d$, $\Omega3e$, $\Omega3f$, $\Omega3g$, $\Omega3h$. These will be used in the slice movies in Section \ref{sec_movies}.

\begin{figure}[htb]
\centerline{
\begin{tabular}{c}
\def\svgwidth{6in}
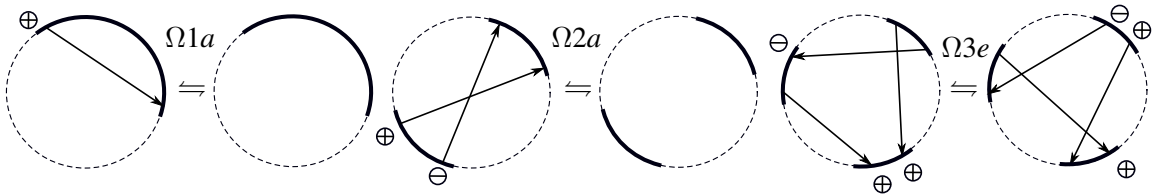
\end{tabular}
} 
\caption{Some Reidemeister moves in Gauss diagram form.}\label{fig_gd_moves}
\end{figure}

\subsection{Manual calculation} \label{sec_manual} Let $K=3.5$. Here we demonstrate a manual calculation of the first non-vanishing $\overline{\zh}$-invariants of $K$. A Gauss diagram of $K=3.5$ is drawn in Figure \ref{fig_three5_gauss}. There are three main steps in the calculation:
\begin{enumerate}
\item Computing $\widetilde{\lambda}^{(q)}=\phi^{(q)}(\lambda)$ for $\lambda$ an extended longitude word of $K$.
\item Writing $\widetilde{\lambda}^{(q)}$ as a product of commutators.
\item Computing the first non-zero coefficients of the Magnus expansion.
\end{enumerate}

\begin{figure}[htb]
\centerline{
\begin{tabular}{c}
\def\svgwidth{1.4in}
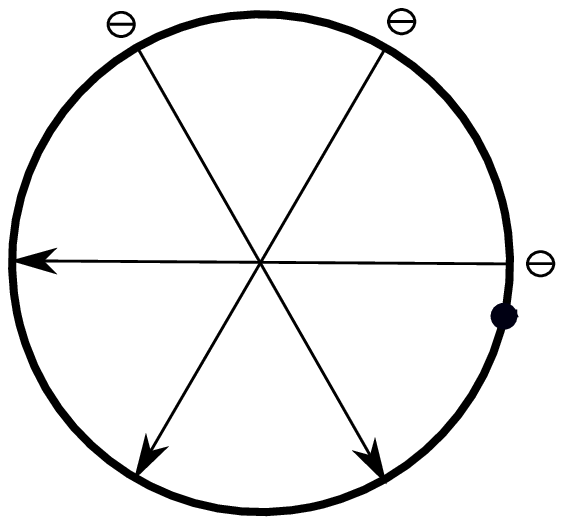\\ \\
\end{tabular}
} 
\caption{The Gauss diagram and base point used in the manual calculation.}\label{fig_three5_gauss}
\end{figure}

\noindent\underline{\emph{Step (1)}}: The following stratagem can be used to quickly write out a presentation of $\widetilde{G}=\widetilde{G}(K)$ from a Gauss diagram. Each crossing of $K$ will be passed twice while traversing the Gauss diagram from the base point. When passing under a crossing, write down the relation of $\widetilde{G}$ of the form $c=bvav^{-1}b^{-1}$ or $d=a^{-1}v^{-1}bva$, according to whether the sign is $-$ or $+$, respectively. When passing over a crossing, write down a relation of the form $d=v^{-1}bv$ or $c=vav^{-1}$, according to whether the crossing is signed $-$ or $+$, respectively. The relations now appear in the form of a serial Wirtinger-type presentation. The extended group $\widetilde{G}=\widetilde{G}(K)$ is then given by:
\begin{align*}
\widetilde{G}=\langle & a_1,a_2,a_3,a_4,a_5,a_6,v | \\  & a_2=\overline{v}a_1v,a_3=\overline{v}a_2v,a_4=\overline{v}a_3v,a_5=a_1va_4\wwbar{v}\overline{a}_1,a_6=a_2va_5\wwbar{v}\overline{a}_2,a_1=a_3va_6\wwbar{v}\overline{a}_3 \rangle.
\end{align*}
An extended parallel word $\widetilde{l} \in \widetilde{G}$ of $K$ is calculated from this presentation: 
\[
\widetilde{l}=v^2\wwbar{a}_1\wwbar{v}\overline{a}_2\wwbar{v}\overline{a}_3.
\] 
Next, the word $\widetilde{l}^{\,(4)}=\phi^{(4)}(\widetilde{l})$ in $F/F_{4}$ is determined inductively as in Theorem \ref{thm_gen_chen_milnor}. After calculating $\phi^{(4)}(\widetilde{l})$, we obtain an extended longitude $\widetilde{\lambda}^{(4)}$ by multiplying $\phi^{(4)}(\widetilde{l})$ by $a^k$, with $k$ chosen so that the exponent sum of $a$ is zero. This results in the following:
\begin{align*}
\phi^{(2)}(\widetilde{l})=\phi^{(3)}(\widetilde{l})=\phi^{(4)}(\widetilde{l}) &= v^2\wwbar{a}\overline{v}^2\wwbar{a}\overline{v}^2\wwbar{a}v^2\\
\widetilde{\lambda}^{(4)}&=v^2\wwbar{a}\overline{v}^2\wwbar{a}\overline{v}^2\wwbar{a}v^2a^3
\end{align*}

\noindent \underline{\emph{Step (2)}}: By Hall \cite{hall}, Theorem 11.2.4, every element $f$ in a free group on $m$ letters $F(m)$ can be written uniquely as:
\[
f\equiv c_1^{e_1} c_2^{e_2} \cdots c_t^{e_t} \pmod{F_{q}},
\]
where $c_1,\ldots,c_t$ are the finite collection of basic commutators with weight at most $q-1$, and $e_i \in \mathbb{Z}$. A basic commutator of weight $i$ is in $F_{i}$ but not in $F_{i+1}$, and the basic commutators up to weight $i$ can be computed recursively. There is an algorithm, called the \emph{collection process}, that writes a word $f$ in normal form as above. Applying the algorithm to $f=\widetilde{\lambda}^{(4)}$, we obtain:
\[
\widetilde{\lambda}^{(4)} \equiv [v,a,a]^4 [v,a,v]^4 \mod F_{4}
\]
Details of the collection process for $\widetilde{\lambda}^{(4)}$ above are available on the author's website for the interested reader.
\newline

\noindent \underline{\emph{Step (3)}}: The $\overline{\zh}$-invariants can now be calculated using the properties of $\epsilon_J$. The relevant properties for our purposes are as follows:

\begin{lemma}\label{lemma_fenn} Let $f \in F_{q}$, $g \in F_r$, and $J$ a sequence.
\begin{enumerate}
\item If $|J|<q$, then $\epsilon_J(f)=0$.
\item If $|J|\le\min\{q,r\}$, then $\epsilon_J(fg)=\epsilon_J(f)+\epsilon_J(g)$.
\item Suppose $J=J_1|J_2=I_1|I_2$ where $|J_1|=|I_2|=q$ and $|J_2|=|I_1|=r$, then:
\[
\epsilon_J([f,g])=\epsilon_{J_1}(f)\epsilon_{J_2}(g)-\epsilon_{I_1}(g)\epsilon_{I_2}(f).
\]
\end{enumerate}
\end{lemma}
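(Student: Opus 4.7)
The plan is to deduce all three parts from the standard fundamental property of the Magnus expansion (already implicit in Fenn's Corollary 4.4.5 cited just before the statement): the ring homomorphism $\epsilon$ carries $F_q$ into $1 + \mathfrak{m}^q$, where $\mathfrak{m} \subset \mathbb{Z}[[Y]]$ is the augmentation ideal generated by $a_1,\ldots,a_m$. Equivalently, for $f \in F_q$, every nonzero monomial appearing in $\epsilon(f) - 1$ has length at least $q$. Part (1) follows at once: if $|J| < q$, then no sequence of length $|J|$ can appear with nonzero coefficient in $\epsilon(f) - 1$, so $\epsilon_J(f) = 0$.

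For part (2), I would write $\epsilon(f) = 1 + \alpha$ and $\epsilon(g) = 1 + \beta$ with $\alpha \in \mathfrak{m}^q$ and $\beta \in \mathfrak{m}^r$. Then
\[
\epsilon(fg) = (1+\alpha)(1+\beta) = 1 + \alpha + \beta + \alpha\beta,
\]
and the cross term $\alpha\beta$ lies in $\mathfrak{m}^{q+r}$. Since $|J| \le \min\{q,r\} < q+r$, the coefficient $\epsilon_J(\alpha\beta)$ vanishes by part (1). Reading off the $J$-coefficient of the remaining sum gives $\epsilon_J(fg) = \epsilon_J(f) + \epsilon_J(g)$.

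For part (3), expand $\epsilon([f,g]) = (1+\alpha)^{-1}(1+\beta)^{-1}(1+\alpha)(1+\beta)$, and set $A = (1+\alpha)(1+\beta)$, $B = (1+\beta)(1+\alpha)$, so that $A - B = \alpha\beta - \beta\alpha$ and
\[
\epsilon([f,g]) = B^{-1}A = 1 + B^{-1}(\alpha\beta - \beta\alpha).
\]
Because $\alpha\beta - \beta\alpha \in \mathfrak{m}^{q+r}$ and $B^{-1} \in 1 + \mathfrak{m}$, the correction $(B^{-1} - 1)(\alpha\beta - \beta\alpha)$ lies in $\mathfrak{m}^{q+r+1}$. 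Hence $\epsilon([f,g]) \equiv 1 + \alpha\beta - \beta\alpha \pmod{\mathfrak{m}^{q+r+1}}$, so only the commutator $\alpha\beta - \beta\alpha$ contributes in degree $|J| = q+r$.

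The remaining step, where the bookkeeping lives and which I expect to be the main obstacle, is to extract the $J$-coefficient of $\alpha\beta - \beta\alpha$ using the convolution rule $\epsilon_J(\alpha\beta) = \sum_{J = J'|J''} \epsilon_{J'}(\alpha)\epsilon_{J''}(\beta)$ and its analogue for $\beta\alpha$. By part (1) applied to $\alpha$ and $\beta$, every summand in $\epsilon_J(\alpha\beta)$ with $|J'| < q$ or $|J''| < r$ vanishes, so the unique surviving splitting is $J = J_1|J_2$, yielding $\epsilon_J(\alpha\beta) = \epsilon_{J_1}(f)\epsilon_{J_2}(g)$. Symmetrically, the only surviving splitting in $\epsilon_J(\beta\alpha)$ is $J = I_1|I_2$, yielding $\epsilon_J(\beta\alpha) = \epsilon_{I_1}(g)\epsilon_{I_2}(f)$. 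Subtracting delivers the stated formula; the rest is formal manipulation in $\mathbb{Z}[[Y]]$ controlled by the single inclusion $\epsilon(F_q) \subseteq 1 + \mathfrak{m}^q$.
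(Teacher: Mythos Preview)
Your argument is correct and is the standard way to prove these facts. The paper itself does not supply a proof of this lemma; it simply refers the reader to Fenn's textbook (Lemma 4.4.1 there), so there is nothing to compare against beyond observing that your expansion via $\epsilon(F_q)\subseteq 1+\mathfrak{m}^q$ and the computation $\epsilon([f,g])\equiv 1+\alpha\beta-\beta\alpha \pmod{\mathfrak{m}^{q+r+1}}$ is exactly the elementary argument such references contain. One tiny cosmetic point: when you invoke ``part (1)'' for the vanishing of $\epsilon_J(\alpha\beta)$ in part (2), you are really using the degree-filtration statement for power series rather than the group-theoretic statement as phrased, but this is harmless since it is the same underlying inclusion $\alpha\beta\in\mathfrak{m}^{q+r}$.
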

\begin{proof} See for example, Fenn \cite{fenn}, Lemma 4.4.1.
\end{proof}

We now proceed with our example calculation. For an arbitrary sequence $J$ with $|J|=3$, partition $J$ in two ways as $J=J_1|J_2=I_1|I_2$, where $|I_1|=|J_2|=1$ and $|I_2|=|J_1|=2$. Then:
\begin{align}
\overline{\zh}_J(K) &= 4 \cdot \epsilon_{J_1}([v,a])(\epsilon_{J_2}(a)+\epsilon_{J_2}(v))-4\cdot\epsilon_{I_2}([v,a])(\epsilon_{I_1}(a)+\epsilon_{I_1}(v)) \label{eqn_three5} 
\end{align}
Thus, to calculate all of the values $\overline{\zh}_J(K)$ with $|J|=3$, we need only need know the Magnus expansions of $a$, $v$, and $[v,a]$ up to order two. The Magnus expansion of the commutator is:
\begin{align*}
[v,a]= v^{-1}a^{-1}va &\rightarrow (1-v+v^2+\cdots)(1-a+a^2+\cdots)(1+v)(1+a) \\
      &= 1+va-av+\text{higher order terms}
\end{align*}
Table \ref{table_3_5_zh} gives all the contributions of terms in Equation (\ref{eqn_three5}) for $J$ with $|J|=3$. The last column gives $\overline{\zh}_J(K)$ for each such $J$. Note that this verifies the result of Theorem \ref{thm_spanning}, that all invariants of order $3$ can be written as a linear combination of $\overline{\zh}_{211}$ and $\overline{\zh}_{221}$.

\begin{table}
\[
\begin{tabular}{|c||ccccccc|} \hline
$J$ & $\epsilon_{J_1}([v,a])$ & $\epsilon_{J_2}(a)$ & $\epsilon_{J_2}(v)$ & $\epsilon_{I_2}([v,a])$ & $\epsilon_{I_1}(a)$ & $\epsilon_{I_1}(v)$ & $\overline{\zh}_J$  \\ \hline \hline
111 & 0  & 0 & 0 & 0  & 0 & 0 & 0 \\ 
112 & 0  & 0 & 1 & -1 & 1 & 0 & 4 \\
121 & -1 & 1 & 0 & 1  & 1 & 0 & -8\\
211 & 1  & 1 & 0 & 0  & 0 & 1 & 4 \\ 
122 & -1 & 0 & 1 & 0  & 1 & 0 & -4\\ 
212 & 1  & 0 & 1 & -1 & 0 & 1 & 8\\ 
221 & 0  & 1 & 0 & 1  & 0 & 1 & -4 \\ 
222 & 0  & 0 & 0 & 0  & 0 & 0 & 0 \\ \hline
\end{tabular}
\]
\caption{Final calculation of the first non-vanishing $\overline{\zh}$-invariants of $K=3.5$.} \label{table_3_5_zh}
\end{table}

\subsection{Slice obstructions for virtual knots} \label{sec_slice_obstructions} Table \ref{table_status} shows the cumulative data for the slice status of the virtual knots up to six classical crossings. The table is compiled from \cite{bbc,bcg2,bcg1,karimi,rush}. The remaining 38 virtual knots of unknown slice status are given in Table \ref{table_status_unknown}. Each has trivial graded genus, generalized Alexander polynomial, and Rasmussen invariant. The Rasmussen invariants for these 38 virtual knots were calculated by H. Karimi. Here we will show that 11 of the virtual knots in Table \ref{table_status_unknown} are not slice. A twelfth virtual knot will be shown to be not slice in Section \ref{sec_vc}. Section \ref{sec_movies} gives slice movies for 22 of the virtual knots from Table \ref{table_status_unknown}. 
\begin{table}[htb]
\[
\begin{tabular}{|c||c|c|c|c|}
\hline
 Crossing & Virtual & Not & Slice  & Status     \\
number & knots & slice &  knots & unknown  \\
\hline \hline
2 & 1 & 1 &  0 & 0\\ 
3 & 7 & 7 &  0 & 0\\ \
4 & 108 & 95 & 13 & 0\\ 
5 & 2448 & 2401 & 45 & 2\\
6 & 90235 & 88958 & 1241 & 36\\ \hline
\end{tabular}
\]
\caption{Summary of results from \cite{bbc,bcg2,bcg1,karimi,rush}.} \label{table_status}
\end{table}
\begin{table}[htb]
\[
\begin{tabular}{|cccccccc|} \hline
5.1216 & 5.1963 & 6.5588 & 6.5958 & 6.6589 & 6.7070 & 6.7388 & 6.8451 \\ 
6.14778 & 6.14781 &  6.15200 & 6.15952 & 6.31455 & 6.33334 & 6.37879 & 6.38158 \\ 
6.38183 & 6.43763 & 6.46936 & 6.46937 & 6.47024 & 6.47172 & 6.47512 & 6.49338 \\
6.52373 & 6.62002 & 6.69085 & 6.70767 & 6.71306 & 6.71848 & 6.72353 & 6.72431 \\ 
6.76251 & 6.76488 & 6.77331 & 6.77735 & 6.86951 & 6.89218 & &  \\ \hline
\end{tabular}
\]
\caption{The 38 virtual knots of unknown slice status in Table \ref{table_status}.} \label{table_status_unknown}
\end{table}

Each of the three steps outlined in Section \ref{sec_manual} can be performed on a computer. \emph{Mathematica} \cite{wolfram} programs were written by the author to accomplish steps (1) and (3). \emph{GAP} \cite{GAP4} was used to perform the commutator collection process used in step (2). More specifically, the word $\widetilde{\lambda}^{(q)}$ is first computed in \emph{Mathematica}. This is then imported into \emph{GAP}. The \emph{GAP} package \emph{ANU NQ}, written by W. Nickel, is then used to compute the nilpotent quotients $F/F_{q}$. Mapping $\widetilde{\lambda}^{(q)}$ to $F/F_{q}$ writes the word in normal form as a product of commutators having weight at most $q-1$. It is important to note that \emph{ANU NQ} uses a different commutator basis than the Hall basis discussed in Section \ref{sec_manual}. Lemma \ref{lemma_fenn} implies that it is sufficient for our purposes to write $\widetilde{\lambda}^{(q)}$ as any product of commutators. Details on the commutator basis used by \emph{ANU NQ} can be found in Nickel \cite{nickel}. Lastly, the normal form is imported back into \emph{Mathematica} and the coefficients $\epsilon_J(\widetilde{\lambda}^{(q)})$ of the Magnus expansion are computed. The \emph{Mathematica} and \emph{GAP} programs are available on the author's website.

Consider the 11 virtual knots in Table \ref{table_the_eleven}. It is useful to observe that the length of $\widetilde{\lambda}^{(q)}$ can sometimes be reduced by shifting the base point of the Gauss code to maximize the overcrossings appearing near the beginning of the code. Table \ref{table_the_eleven} gives this shifted code. The rightward shift needed to obtain this Gauss code from the Gauss code in Green's table is indicated in parentheses. Each of these virtual knots has its first non-vanishing $\overline{\zh}$-invariant at order five. The commutator basis used by \emph{ANU NQ} is:
\begin{align*}
g_1 &=a           &g_8 &= [v,a,v,v] \\
g_2 &= v          &g_9 &= [v,a,a,a,a] \\
g_3 &= [v,a]     &g_{10} &= [v,a,a,a,v] \\ 
g_4 &= [v,a,a]   &g_{11} &= [v,a,v,a,a] \\
g_5 &= [v,a,v]   &g_{12} &= [v,a,v,a,v] \\
g_6 &= [v,a,a,a] &g_{13} &= [v,a,v,v,a] \\
g_7 &= [v,a,v,a] &g_{14} &= [v,a,v,v,v]
\end{align*}
Table \ref{table_the_eleven} shows the normal form of $\widetilde{\lambda}^{(6)}\!\!\mod F_{6}$.  The $\overline{\zh}$-invariants in the spanning set $\mathscr{S}_5$ (see Table \ref{table_spanning}) are given in Table \ref{table_zh_the_eleven}. As there is at least one non-vanishing $\overline{\zh}$-invariant in each case, none of these virtual knots are slice.

\begin{table}[htb]
\small
\renewcommand{\arraystretch}{1.3}
\[
\begin{array}{|c||c|c|c|}\hline
  K      & \text{Gauss code (right shift from \cite{green})} &  \text{length of } \widetilde{\lambda}^{(6)} &  \widetilde{\lambda}^{(6)} \!\!\! \mod F_{6} \\ \hline \hline
6.6589 & \text{O1-O2-O3-O4+U3-O5+U4+O6+U2-U1-U5+U6+}  (2) & 22   & g_{10} \overline{g}_{11} g_{12} \overline{g}_{13} \\ \hline
6.7070  & \text{O1+O2-O3-U2-O4-U3-U5+U4-O6+U1+O5+U6+} (1) & 792  & \overline{g}_{10}g_{11} \\ \hline
6.15200 & \text{O1+O2+O3-O4+U3-O5-O6-U5-U1+U6-U4+U2+} (2) & 20   & \overline{g}_{10}g_{11} g_{12} \overline{g}_{13} \\ \hline
6.15952 & \text{O1-O2-U1-O3-O4+U3-U5+O6+O5+U6+U2-U4+} (0) & 1120 & g_{10} \overline{g}_{11} \overline{g}_{12} g_{13} \\ \hline
6.43763 & \text{O1+U2+O3+O4-O2+U4-O5-U6-U1+U5-O6-U3+} (3) & 586  & \overline{g}_{10} g_{11} \\ \hline
6.47172 & \text{O1+O2-O3-O4-U3-O5+U2-U6+U1+U4-O6+U5+} (2) & 634 & g_{10}^2\wwbar{g}_{11}^{2} \\ \hline
6.47512 & \text{O1+O2+O3-O4+U3-O5-U2+U6-U1+U5-O6-U4+} (2) & 934 & \overline{g}_{10}^{2} g_{11}^2 \\ \hline
6.71848 & \text{O1-O2+O3-U1-O4-U2+O5+U6+U4-U5+O6+U3-} (0) & 586 & g_{10} \overline{g}_{11} \\ \hline
6.72431 & \text{O1-O2+O3-U1-O4-U3-O5+U2+O6+U5+U4-U6+} (0) & 14 & g_{10} \overline{g}_{11} \\ \hline
6.76251 & \text{O1-O2+O3-U1-O4-U5+O6+U2+O5+U6+U4-U3-} (0) & 498 & g_{10} \overline{g}_{11} \\ \hline
6.89218 & \text{O1-O2+U3+O4+U2+O3+U5-O6-U1-O5-U6-U4+} (0) & 2278 & \overline{g}_{10}g_{11} \\ \hline
\end{array}
\]
\caption{Summary of the first two steps in calculating the $\overline{\zh}$-invariants.} \label{table_the_eleven}
\end{table}

\begin{table}[htb]
\[
\begin{tabular}{|c||ccccccccccc|} \hline
 & \multicolumn{11}{|c|}{$\overline{\zh}_J(K)$}  \\
$J$ & \rotatebox{75}{6.6589} & \rotatebox{75}{6.7070} & \rotatebox{75}{6.15200} &\rotatebox{75}{6.15952} &\rotatebox{75}{6.43763} & \rotatebox{75}{6.47172} & \rotatebox{75}{6.47512} & \rotatebox{75}{6.71848} & \rotatebox{75}{6.72431} & \rotatebox{75}{6.76251} & \rotatebox{75}{6.89218} \\ \hline \hline
21111 & 0 & 0 & 0 & 0 & 0 & 0 & 0 & 0 & 0 & 0 & 0 \\
21211 & 1 & -1 & -1 & 1 & -1 & 2 & -2 & 1 & 1 & 1 & -1 \\
22111 & 0 & 0 & 0 & 0 & 0 & 0 & 0 & 0 & 0 & 0 & 0 \\
22121 & 1 & 0 & 1 & -1 & 0 & 0 & 0 & 0 & 0 & 0 & 0 \\
22211 & 0 & 0 & 0 & 0 & 0 & 0 & 0 & 0 & 0 & 0 & 0 \\
22221 & 0 & 0 & 0 & 0 & 0 & 0 & 0 & 0 & 0 & 0 & 0 \\\hline
\end{tabular}
\]
\caption{The $\overline{\zh}$-invariants in the spanning set $\mathscr{S}_5$ for the knots in Table \ref{table_the_eleven}.} \label{table_zh_the_eleven}
\end{table}

\subsection{$\mathscr{VC}$ is not abelian} \label{sec_vc} In this section, we use the extended Artin representation to study the concordance group of long virtual knots. As a second application, we will also show that the (closed) virtual knot $6.8451$ is not slice.

\begin{figure}[htb]
\centerline{
\begin{tabular}{c} \\
\def\svgwidth{4in}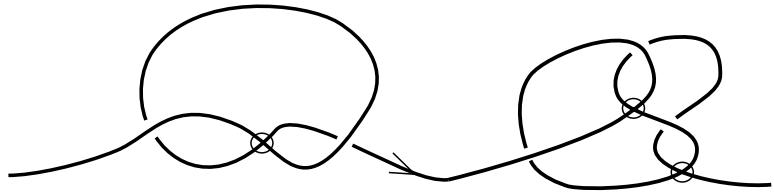
\end{tabular}
}
\caption{A connected sum of the long virtual knots $2.1$ and $3.1$.} \label{fig_cs_defn}
\end{figure}

\begin{theorem} \label{thm_vc_not_abelian} The concordance group $\mathscr{VC}$ of long virtual knots is not abelian.
\end{theorem}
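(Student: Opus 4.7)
The strategy is to use the extended Artin representation from Section~\ref{sec_ex_artin} as a non-abelian-valued concordance invariant. For each $q\ge 2$, applying Theorem~\ref{thm_artin} to $\Zh(\vec{K})$ shows that $\vec{K}\mapsto A^{(q)}_{\zh}(\vec{K})$ descends to a well-defined map on concordance classes, and a direct calculation shows it is multiplicative with respect to concatenation and composition of automorphisms, i.e.\ $A^{(q)}_{\zh}(\vec{K}_1\#\vec{K}_2)=A^{(q)}_{\zh}(\vec{K}_2)\circ A^{(q)}_{\zh}(\vec{K}_1)$ (with a suitable ordering convention). Indeed, reading the extended longitude of the concatenation strand, one first encounters the overcrossings of $\vec{K}_1$ (producing the factor $\widetilde{\lambda}_1^{(q)}$), and then those of $\vec{K}_2$ with its meridians conjugated by $\widetilde{\lambda}_1^{(q)}$; combined with Theorem~\ref{thm_artin_form} this yields the composition formula.

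Once the homomorphism $\mathscr{VC}\to\mathrm{Aut}(Q_{q}(F(2)))^{\mathrm{op}}$ is established, proving non-commutativity of $\mathscr{VC}$ reduces to exhibiting two long virtual knots $\vec{K}_1,\vec{K}_2$ whose extended Artin representations fail to commute. Since both automorphisms fix $v$ and conjugate $a$ by $\widetilde{\lambda}_i^{(q)}$, this amounts to detecting a non-zero Magnus coefficient of the commutator $[\widetilde{\lambda}_1^{(q)},\widetilde{\lambda}_2^{(q)}]$ in $F(2)$. Equivalently, one may apply Corollary~\ref{lemma_ab} directly to $\Zh(\vec{K}_1\#\vec{K}_2)$ and $\Zh(\vec{K}_2\#\vec{K}_1)$: concordance would force $\phi^{(q)}(\overline{\widetilde{\lambda}}_{1\#2})\phi^{(q)}(\widetilde{\lambda}_{2\#1})\in F_r$ for all $q\ge r$, and the composition formula expresses this element (modulo lower-order terms) as the commutator $[\widetilde{\lambda}_1^{(q)},\widetilde{\lambda}_2^{(q)}]^{\pm 1}$, so a non-trivial commutator gives the desired contradiction.

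For the concrete examples, I would draw candidates from the low-crossing computations of Section~\ref{sec_slice_obstructions}. The long version of $3.5$ already has $\phi^{(4)}(\widetilde{\lambda})\equiv [v,a,a]^4[v,a,v]^4\pmod{F_4}$, and since $[v,a,a]$ and $[v,a,v]$ are linearly independent basic commutators of weight~3, pairing this with a second long virtual knot whose extended longitude reduces mod $F_q$ to a linearly independent element will produce a non-trivial higher-weight commutator. The main obstacle will be the computational verification: on the one hand, carefully tracking the short-arc generators of $\widetilde{G}(\vec{K}_1\#\vec{K}_2)$ through the $\omega$ component of $\Zh$ to confirm the composition formula for the extended Artin representation; on the other, using the Mathematica/GAP pipeline of Section~\ref{sec_slice_obstructions} to certify that a chosen pair yields a non-zero Magnus coefficient at a specific nilpotent order, modulo the full indeterminacy inherent in the $\bar{\mu}$-type construction.
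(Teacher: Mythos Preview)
Your approach is essentially the paper's: use Corollary~\ref{lemma_ab} applied to $\Zh$ to show that $\vec{K}_1\#\vec{K}_2$ and $\vec{K}_2\#\vec{K}_1$ are not concordant for some specific pair. The conceptual packaging via a composition formula $A^{(q)}_{\zh}(\vec{K}_1\#\vec{K}_2)=A^{(q)}_{\zh}(\vec{K}_2)\circ A^{(q)}_{\zh}(\vec{K}_1)$ and the identification of the obstruction with $[\widetilde{\lambda}_1^{(q)},\widetilde{\lambda}_2^{(q)}]$ at leading order are correct and give a clean picture, but note that you would still need to verify $\Zh(\vec{K}_1\#\vec{K}_2)\leftrightharpoons_{sw}\Zh(\vec{K}_1)\#\Zh(\vec{K}_2)$ to justify multiplicativity of the \emph{extended} Artin representation; this is straightforward from the overcrossings-commute argument for $\omega$, but it is an extra lemma the paper does not prove.

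The paper takes a more direct and more concrete route: it fixes $\vec{K}_1=2.1$ and $\vec{K}_2=3.1$, treats $\vec{K}=2.1\#3.1$ and $\vec{K}^*=3.1\#2.1$ as five-crossing long knots in their own right, computes $\phi^{(q)}(\widetilde{\lambda})$ and $\phi^{(q)}(\widetilde{\lambda}^*)$ directly from their Gauss codes, and checks via \emph{GAP} that $\phi^{(q)}(\widetilde{\lambda}^{-1})\phi^{(q)}(\widetilde{\lambda}^*)\equiv 1\pmod{F_q}$ for $q\le 7$ but is a specific nontrivial product of weight-seven basic commutators modulo $F_8$. So no composition formula is invoked; the nonvanishing is exhibited only at $q=8$ for this pair. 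Your proposal to use $3.5$ and a second knot with linearly independent leading-order longitude would, if it works, detect non-commutativity already in $F_6/F_7$ via $[[v,a,a],[v,a,v]]$, which is an attractive simplification, but you have not yet named the second knot or carried out the verification, and that verification is exactly the content of the proof.
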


\begin{proof} Consider the long virtual knots given by the following Gauss codes: 
\[
\text{O1-O2-U1-U2-} \text{ and } \text{O1-O2-U1-O3+U2-U3+}.
\]
These are $2.1$ and $3.1$, respectively, as virtual knots in Green's table. Let $\vec{K}=2.1 \# 3.1$ and $\vec{K}^*=3.1 \# 2.1$. We will show that $A^{(q)}_{\zh}(\vec{K}) \ne A^{(q)}_{\zh}(\vec{K}^*)$ for some sufficiently large $q$. Let $\widetilde{\lambda}$ be an extended longitude word of $\vec{K}$ in $\Zh(\vec{K})$ and $\widetilde{\lambda}^*$ an extended longitude word for $\vec{K}^*$ in $\Zh(\vec{K}^*)$. By Lemma \ref{lemma_ab}, it is sufficient to show that $
\phi^{(q)}(\widetilde{\lambda}^{-1})\phi^{(q)}(\widetilde{\lambda}^*)$ is not in $F_{q}$ for some $q$, where $F$ is the free group on two letters $a,v$. For $q \le 7$,  $\phi^{(q)}(\widetilde{\lambda}^{-1})\phi^{(q)}(\widetilde{\lambda}^*) \equiv 1 \mod F_{q}$. For $q=8$, we have: 
\begin{eqnarray*}
\phi^{(8)}(\widetilde{\lambda}) &=& v a^{-1} v^{-2} a^{-1} v a v^2 a v^{-1} a^{-1} v (a^{-1} v^{-2})^2 v^{-1} a v^2 a v^{-1} a^{-1} v a^{-1} v^{-2} a^{-1} (v^2 a)^2 v^{-1}\cdot \\
& & a v a^{-1} v^{-2} a^{-1} v^{-1} a v^2 a v^{-1} a v a^{-1} v^{-2} a^{-1} v a v^2 a v^{-1} a^{-1} v a^{-1} v^{-2} a^{-1} v a^3\\
\phi^{(8)}(\widetilde{\lambda}^*) &=& v a^{-1} v^{-3} a^{-1} v^2 a v^{-1} a v a^{-1} v a v^{-1} a^{-1} v a^{-1} v^{-2} a v^3 a v^{-1} a^{-1} v a^{-1} v^{-3} a^{-1} v^2 a v^{-1} \cdot \\
& & a v a^{-1} v^{-2} a v^{-1} a^{-1} v a^{-1} v^{-2} a v^3 a v^{-1} a^{-1} v a^{-1} v^{-3} a^{-1} v^2 a v^{-1} a v a^{-1} v a^3
\end{eqnarray*}
Projecting to $F/F_{8}$ and computing $
\phi^{(8)}(\widetilde{\lambda}^{-1})\phi^{(8)}(\widetilde{\lambda}^*)$ (via \emph{GAP}) , we have:
\begin{eqnarray*}
\phi^{(8)}(\widetilde{\lambda}^{-1})\phi^{(8)}(\lambda^*) & \equiv & \overline{g}_{25}^2g_{26}^8g_{27}^4\wwbar{g}_{28}^6\wwbar{g}_{29}^5g_{32}^3g_{33}^3\wwbar{g}_{34}^2\wwbar{g}_{35}\overline{g}_{36}^6\wwbar{g}_{37}^2g_{38}^
4g_{39}^3\wwbar{g}_{40} \mod F_{8},
\end{eqnarray*}
where the commutators $g_{k}$ above are given by:
\begin{align*}
g_{25} &= [ v, a, a, a, a, a, v ] &g_{34} &= [ v, a, v, v, a, a, a ] \\
g_{26} &= [ v, a, a, a, v, a, a ] &g_{35} &= [ v, a, v, v, a, a, v ] \\
g_{27} &= [ v, a, a, a, v, a, v ] &g_{36} &= [ v, a, v, v, a, v, a ]  \\ 
g_{28} &= [ v, a, v, a, a, a, a ] &g_{37} &= [ v, a, v, v, a, v, v ] \\
g_{29} &= [ v, a, v, a, a, a, v ] &g_{38} &= [ v, a, v, v, v, a, a ] \\
g_{32} &= [ v, a, v, a, v, a, a ] &g_{39} &= [ v, a, v, v, v, a, v ] \\
g_{33} &= [ v, a, v, a, v, a, v ] &g_{40} &= [ v, a, v, v, v, v, a ]
\end{align*}
Thus, the long virtual knots $\vec{K}=2.1\#3.1$ and $\vec{K}^*=3.1\#2.1$ are not concordant and we conclude that $\mathscr{VC}$ is not abelian. 
\end{proof}

The \emph{closure} of a long virtual knot $\vec{K}$ is the virtual knot $\text{cl}(\vec{K})$ obtained by identifying its endpoints. If $\vec{K}$ and $\vec{K}^*$ are concordant, then $\text{cl}(\vec{K})$ and $\text{cl}(\vec{K}^*)$ are concordant as virtual knots (see \cite{band_pass}, Section 3.1). Boden and Nagel \cite{boden_nagel} observed that a long virtual knot is slice if and only if its closure is slice. A natural question is whether this is true for all concordance classes of long virtual knots. The above example shows that this is not the case. 

\begin{corollary} There exist non-concordant long virtual knots with concordant closure.
\end{corollary}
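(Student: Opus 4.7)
The plan is to exhibit the pair of long virtual knots already constructed in the proof of Theorem~\ref{thm_vc_not_abelian} as the required example. Set $\vec{K} = 2.1 \# 3.1$ and $\vec{K}^* = 3.1 \# 2.1$. Theorem~\ref{thm_vc_not_abelian} already establishes that $\vec{K}$ and $\vec{K}^*$ are not concordant as long virtual knots, so the only thing to verify is that their closures $\mathrm{cl}(\vec{K})$ and $\mathrm{cl}(\vec{K}^*)$ are in fact concordant (indeed, equivalent) virtual knots.

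The key geometric observation is that the closure operation forgets the endpoints and the linear ordering of a long virtual knot, retaining only the cyclic ordering. For any two long virtual knots $\vec{J}_1$ and $\vec{J}_2$, the diagram $\vec{J}_1 \# \vec{J}_2$ placed in $[0,1] \times [0,2]$ and then closed up is obtained by gluing the right endpoint of $\vec{J}_2$ to the left endpoint of $\vec{J}_1$ by an arc running around the outside; but the resulting closed diagram is isotopic in the plane (using detour moves for any virtual crossings introduced by the closing arc) to the diagram in which $\vec{J}_2$ is read first and then $\vec{J}_1$. Hence $\mathrm{cl}(\vec{J}_1 \# \vec{J}_2) \leftrightharpoons \mathrm{cl}(\vec{J}_2 \# \vec{J}_1)$ as virtual knots. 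Applying this to $\vec{J}_1 = 2.1$ and $\vec{J}_2 = 3.1$ gives $\mathrm{cl}(\vec{K}) \leftrightharpoons \mathrm{cl}(\vec{K}^*)$, and equivalent virtual knots are in particular concordant.

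Combining the two facts, $\vec{K}$ and $\vec{K}^*$ are non-concordant long virtual knots whose closures are (equivalent and hence) concordant virtual knots, which is exactly the statement. I do not anticipate any obstacle here; the corollary is a direct consequence of Theorem~\ref{thm_vc_not_abelian} together with the elementary cyclic-symmetry property of closure, and the only point worth spelling out carefully is that the planar isotopy realizing $\mathrm{cl}(\vec{J}_1 \# \vec{J}_2) \leftrightharpoons \mathrm{cl}(\vec{J}_2 \# \vec{J}_1)$ only produces virtual crossings that can be absorbed by detour moves.
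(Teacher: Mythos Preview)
Your proof is correct and follows essentially the same approach as the paper: the paper uses the same pair $\vec{K}=2.1\#3.1$ and $\vec{K}^*=3.1\#2.1$, invokes Theorem~\ref{thm_vc_not_abelian} for non-concordance, and notes that their closures are identical (hence concordant). Your additional remarks about cyclic symmetry and detour moves simply spell out why $\mathrm{cl}(\vec{J}_1\#\vec{J}_2)\leftrightharpoons\mathrm{cl}(\vec{J}_2\#\vec{J}_1)$, which the paper takes as evident.
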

\begin{proof} The long virtual knots $\vec{K}=2.1\#3.1$ and $\vec{K}^*=3.1\#2.1$ have identical closure and hence have concordant closure. By the proof of Theorem \ref{thm_vc_not_abelian}, $\vec{K}$ and $\vec{K}^*$ are not concordant as long virtual knots.
\end{proof}

The method used in the proof of Theorem \ref{thm_vc_not_abelian} can also be used to show that (closed) virtual knots are not slice in some cases. Suppose that a closed virtual knot can be represented as $K\leftrightharpoons\text{cl}(\vec{K}\# \vec{K}^*)$ for some long virtual knots $\vec{K},\vec{K}^*$. This will be slice if and only if $\vec{K}\# \vec{K}^*=1 \in \mathscr{VC}$, which occurs if and only if $\vec{K}=(\vec{K}^*)^{-1}$ in $\mathscr{VC}$. Then Lemma \ref{lemma_ab} implies that $\phi^{(q)}(\widetilde{\lambda})\phi^{(q)}(\widetilde{\lambda}^*)\in F_{q}$, where $\widetilde{\lambda}$, $\widetilde{\lambda}^*$ are extended longitude words for $\vec{K},\vec{K}^*$, respectively.  
\begin{figure}[htb]
\centerline{
\begin{tabular}{c}
\def\svgwidth{1.4in}
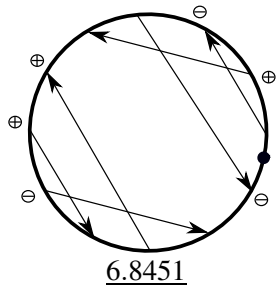 \\ \\
\end{tabular}
}
\caption{6.8451 is the closure of the product of two long virtual knots, after a shift in base point. The depicted base point coincides with Green's table \cite{green}.}\label{fig_six8451_gauss}
\end{figure}

After shifting the base point to the right by one endpoint, the virtual knot $K=6.8451$ can be represented as $\text{cl}(\vec{K}\#\vec{K}^*)$ (see Figure \ref{fig_six8451_gauss}):
\begin{align*}
K &\leftrightharpoons \text{cl}(\text{U1-O2-O3+U2-O1-U3+U4+O5+O6-U5+O4+U6-}), \\
 \vec{K} &= \text{U1-O2-O3+U2-O1-U3+},\\
 \vec{K}^* &= \text{U1+O2+O3-U2+O1+U3-}.
\end{align*}
The first $q$ for which $\phi^{(q)}(\widetilde{\lambda})\phi^{(q)}(\widetilde{\lambda}^*) \not\in F_{q}$ is $6$. The length of both $\phi^{(6)}(\widetilde{\lambda})$, $\phi^{(6)}(\widetilde{\lambda}^*)$ is 2788. They can be written as products of commutators as follows.
\begin{align*}
\phi^{(6)}(\widetilde{\lambda}) &\equiv\wwbar{g}_4\wwbar{g}_5 g_6^2 g_7^2 g_8\wwbar{g}_9^3 \overline{g}_{10}\overline{g}_{11}g_{12}\overline{g}_{13}^2\wwbar{g}_{14} & \mod F_{6}  \\
\phi^{(6)}(\widetilde{\lambda}^*) &\equiv g_4g_5\wwbar{g}_8g_{10}\overline{g}_{11}^2\wwbar{g}_{12} g_{14} & \mod F_{6} \\
\phi^{(q)}(\widetilde{\lambda})\phi^{(q)}(\widetilde{\lambda}^*) &\equiv g_6^2g_7^2\wwbar{g}_9^3\wwbar{g}_{11}^3\wwbar{g}_{13}^2 & \mod F_{6}
\end{align*}
Thus, $6.8451$ is not slice as a (closed) virtual knot. 

\subsection{Beyond the first non-vanishing invariants} \label{sec_beyond} Here we give some example calculations of the $\overline{\zh}$-invariants beyond the first non-vanishing order. As the properties of $\epsilon_J$ in Lemma \ref{lemma_fenn} no longer apply in this case, the Magnus expansion coefficients will be calculated directly from their definition. This was done with a \emph{Mathematica} program, which is available on the author's website. It calculates the invariant $\overline{\zh}_J$ for $J=22\cdots 211\cdots 1$.  

\begin{figure}[htb]
\centerline{
\begin{tabular}{c}
\def\svgwidth{3.2in}
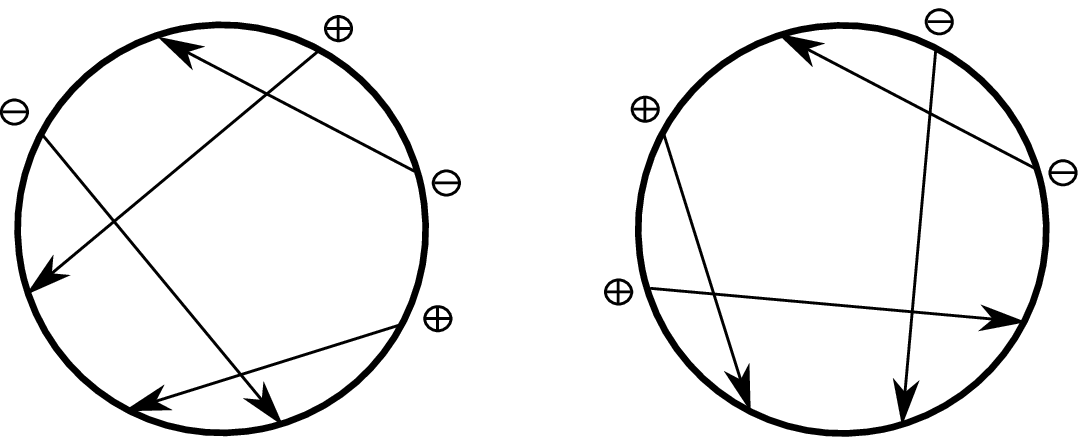\\ \\
\end{tabular}
}
\caption{Two virtual knots with the same first non-vanishing $\overline{\zh}$-invariants but with different higher order $\overline{\zh}$-invariants.}\label{fig_four19_four32}
\end{figure}

Consider the virtual knots $K=4.19$ and $K^*=4.32$ shown in Figure \ref{fig_four19_four32}. These knots have the same generalized Alexander polynomial \cite{green} and hence the same odd writhe, Henrich-Turaev polynomial, and affine index polynomial. By \cite{bcg1}, $K,K^*$ have the same graded genus and the same slice genus. 

The first non-vanishing $\overline{\zh}$-invariants for $K,K^*$ occur at order three. Denote by $\widetilde{\lambda},\widetilde{\lambda}^*$ extended longitude words for $K,K^*$, respectively. For the invariants of order three, we find that: 
\[
\phi^{(4)}(\widetilde{\lambda}) \equiv \phi^{(4)}(\widetilde{\lambda}^*) \equiv g_4^2g_5^2 \mod F_{4}.
\]
Thus, $\overline{\zh}_J(K)=\overline{\zh}_J(K^*)$ for all $J$ with $|J|=3$. Furthermore, we see that the $\overline{\zh}$-invariants of order four are defined only $\!\!\pmod 2$. For the invariants of order four, we find:
\begin{align*}
\phi^{(5)}(\widetilde{\lambda}) &=v \overline{a} \overline{v} a \overline{v} a v \overline{a}\overline{v}^2 \overline{a} v^3 a \overline{v} a v \overline{a}\overline{v} \\
\phi^{(5)}(\widetilde{\lambda}^*) &= \overline{v}\overline{a}v a v a^2 v \overline{a} \overline{v} \overline{a} \overline{v}
\end{align*}
The $\overline{\zh}$-invariants are given by the residue classes:
\begin{align*}
\overline{\zh}_{2221}(K) & \equiv 1 \pmod 2 & \overline{\zh}_{2211}(K) & \equiv 0 \pmod 2 & \overline{\zh}_{2111}(K) & \equiv 1 \pmod 2 \\
\overline{\zh}_{2221}(K^*) & \equiv 1 \pmod 2 & \overline{\zh}_{2211}(K^*) & \equiv 1 \pmod 2 & \overline{\zh}_{2111}(K^*) & \equiv 0 \pmod 2
\end{align*}
Hence, $4.19$ and $4.32$ are not concordant.

\subsection{Slice movies} \label{sec_movies} In this section, we give slice movies for 22 of the virtual knots in Table \ref{table_status_unknown}. We will use the technique of slicing Gauss diagrams that was introduced in \cite{bcg1}. A saddle move can be represented in a Gauss diagram with a \emph{saddle chord} that connects the two arcs involved in the move. The correspondence is depicted in Figure \ref{fig_saddle_chord}. Note that the surgery itself is not drawn in the Gauss diagram. The two sides of the saddle chord are the arcs of the new link diagram after the move. An arrow endpoint may be slid over a saddle chord, as long as the endpoint stays on the same side of the chord during the slide. If one side of a chord contains no arrow endpoints, then this indicates an unknotted and unlinked component circle that can be eliminated with a death move. 
\newline

\begin{figure}[htb]
\centerline{
\begin{tabular}{c}
\def\svgwidth{2.5in}
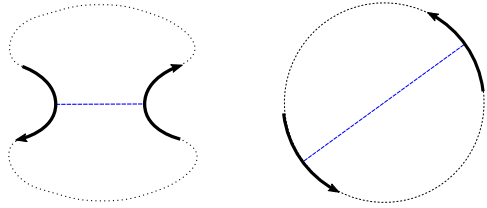\\ \\
\end{tabular}
}
\caption{A saddle chord (blue dashed line) as it appears in a virtual knot diagram (left) and in a Gauss diagram (right).}\label{fig_saddle_chord}
\end{figure}

\noindent\textbf{Examples (5.1216, 5.1963):} Slice movies for each of these two virtual knots is given in Figure \ref{fig_last_five} (see also Figure \ref{fig_conc_example}). Both can be sliced using the same technique. First an $\Omega 3$ move is used to rearrange the arrows. The arcs involved in the $\Omega 3$ move are emphasized with the red shaded arcs drawn outside of the circle.  Then a saddle chord is added. After the saddle, an $\Omega 2$ move is performed. The rightmost frame in each movie is a trivial $2$-component link.  This can be seen by performing an $\Omega 2$ move followed by an $\Omega 1$ move. These movies complete the slice status classification of all virtual knots up to $5$ classical crossings.
\newline 

\begin{figure}[htb]
\centerline{
\begin{tabular}{cc}
\underline{5.1216:} & 
\begin{tabular}{c}
\def\svgwidth{4.9in}
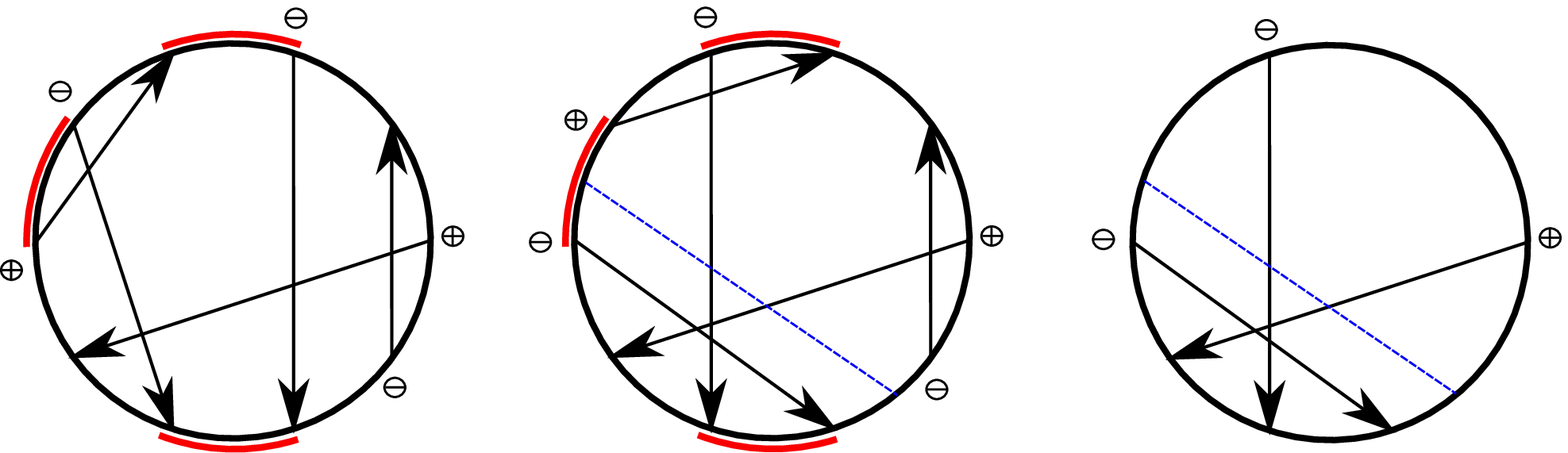
\end{tabular} \\ 
\underline {5.1963:} & \begin{tabular}{c}
\def\svgwidth{4.9in}
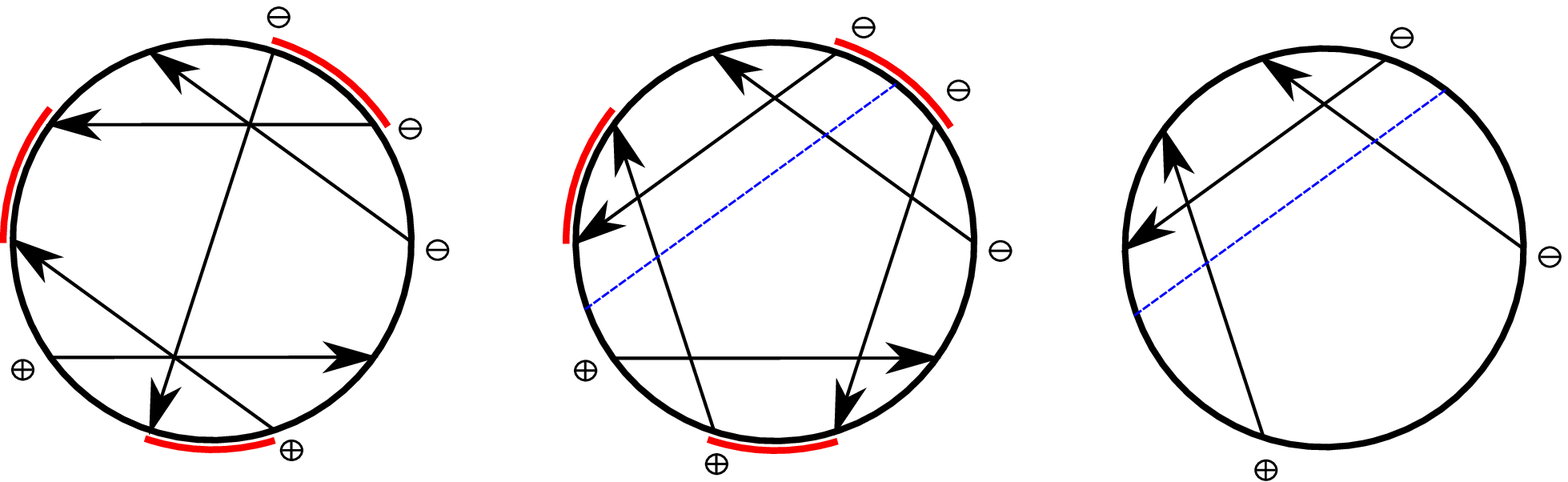
\end{tabular}
\end{tabular}
}
\caption{Slice movies for the two five crossing virtual knots in Table \ref{table_status_unknown}.} \label{fig_last_five}
\end{figure}

\noindent\textbf{Examples (6.14778, 6.14781, 6.46936, 6.46937)}: A slice movie for 6.14778, is given in Figure \ref{fig_easy_conc_example}. A saddle chord is drawn in the first frame. After the saddle, the two green arrows are rearranged so that an $\Omega 2$ move is visible. After deleting the green arrows, we see that the $2$-component link has an unknotted and split component. This component is eliminated with a death move. Next, we perform the indicated $\Omega 3$ move (red shaded arcs). Two $\Omega 2$ moves (blue and gold shaded arcs) can then be used to reduce to the unknot. Thus, 6.14778 is slice. The same procedure works for the other virtual knots listed. The first frame of the movie for each is given Figure \ref{fig_6_jones}.
\newline

\begin{figure}[htb]
\centerline{
\begin{tabular}{c}
\def\svgwidth{4.9in}
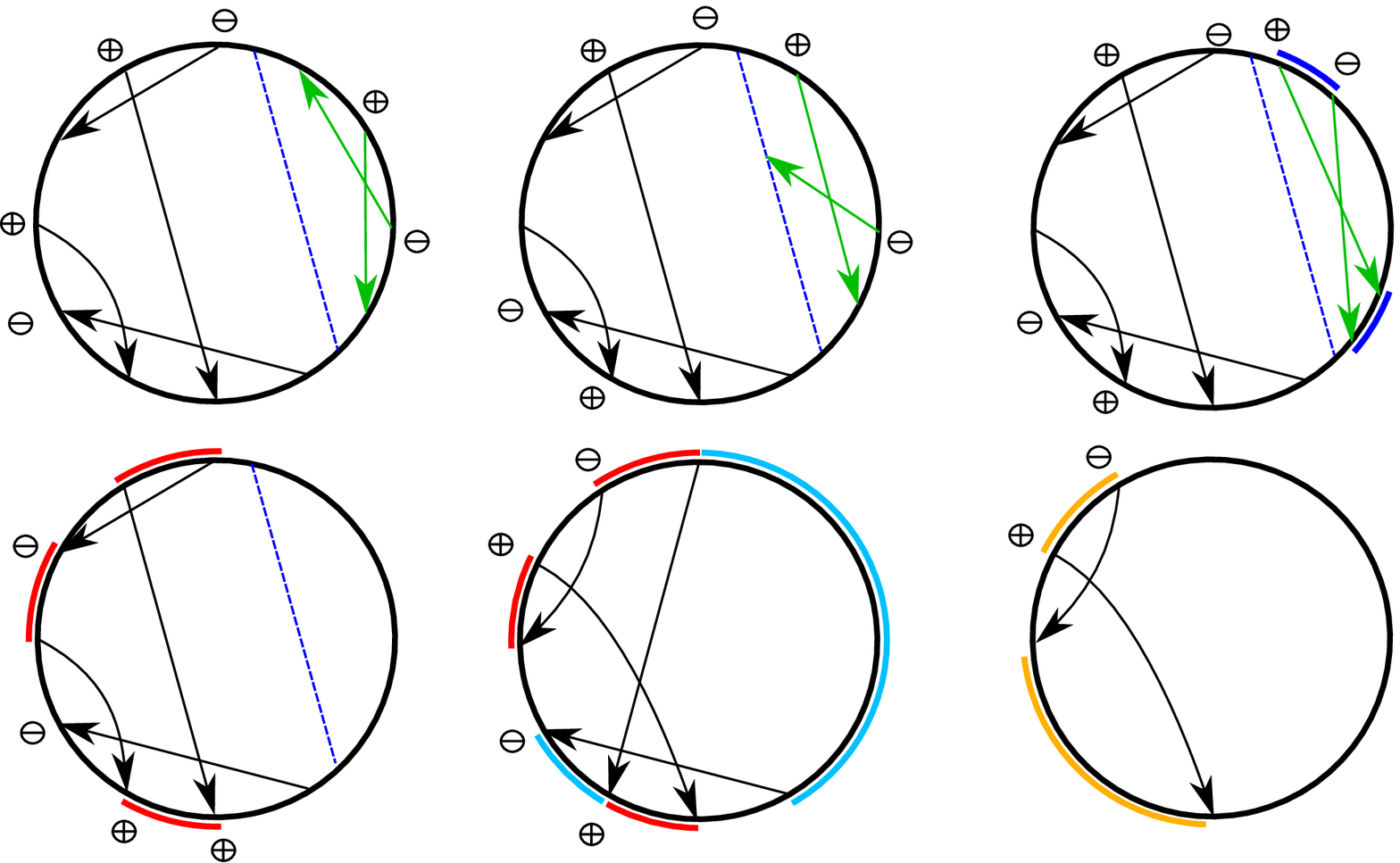
\end{tabular}
}
\caption{A slice movie for 6.14778 (top left). The movie starts with a saddle, goes left-to-right on the first row, and then left-to-right on the second row. } \label{fig_easy_conc_example}
\end{figure}

\begin{figure}[htb]
\centerline{
\begin{tabular}{c}
\def\svgwidth{6in}
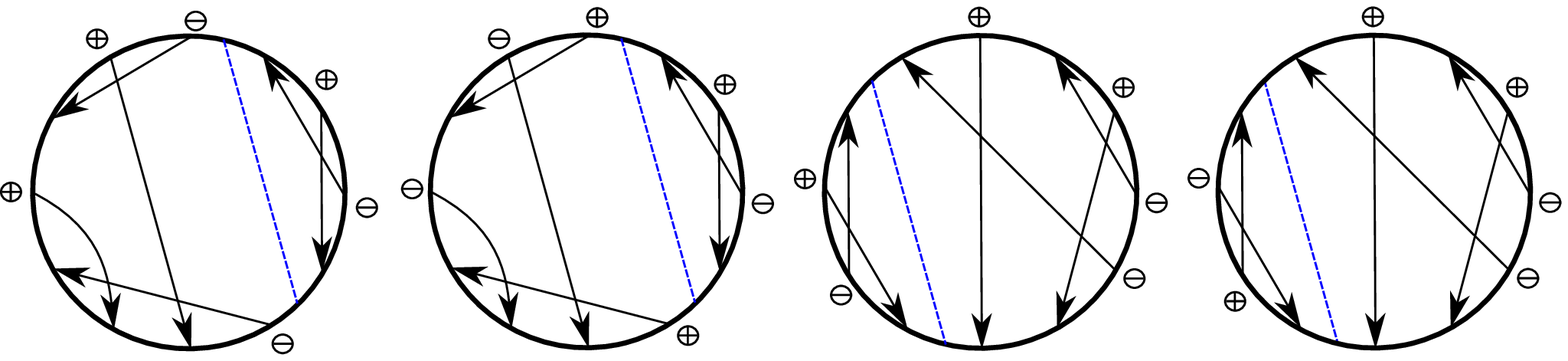 \\ \\
\end{tabular}
} 
\caption{The same procedure as in Figure \ref{fig_easy_conc_example} works for these four knots.} \label{fig_6_jones}
\end{figure}

\noindent\textbf{Example (6.5958):} The slice movie is given in Figure \ref{fig_6_5958}. After performing the saddle, we rearrange the arrow highlighted in green. This sets up an $\Omega3 e$ move. The red arcs indicate the three arcs involved in the move. After the $\Omega 3e$ move, two $\Omega2$ moves are available (blue shaded arcs). A last $\Omega 2$ move (gold shaded arcs) gives a two component unlink. A death move applied to one of the unknots gives a concordance to the the unknot. Hence, 6.5958 is slice.
\newline

\begin{figure}[htb]
\centerline{
\begin{tabular}{c}
\def\svgwidth{4.9in}
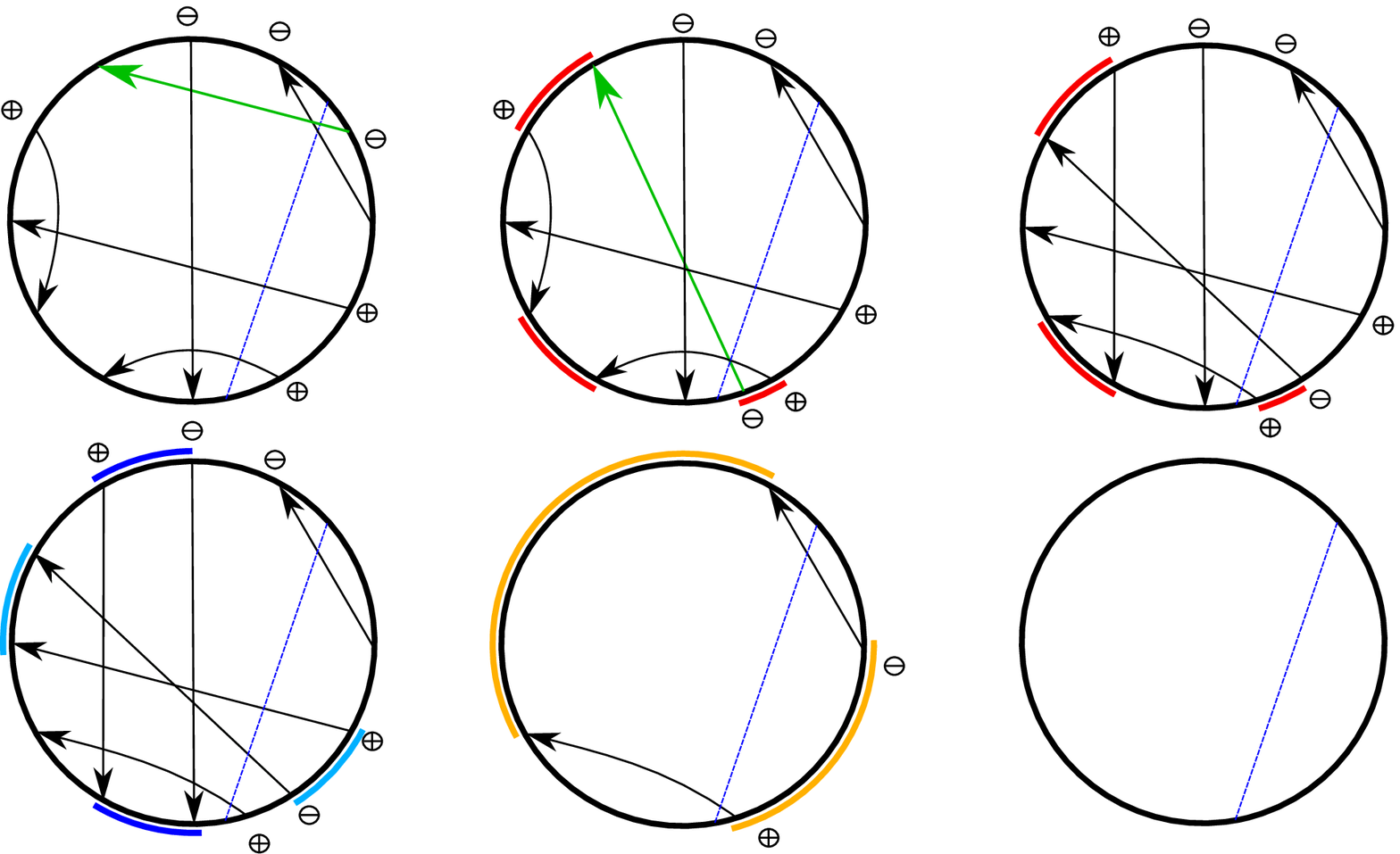
\end{tabular}
}
\caption{A slice movie for 6.5958 (top left). The movie starts with a saddle, goes left-to-right on the first row, and then left-to-right on the second row.} \label{fig_6_5958}
\end{figure}

\noindent\textbf{Examples (Figures \ref{fig_5_done}, \ref{fig_5_done_II}, \ref{fig_5_done_III}):} The technique used for 5.1216 and 5.1963 also slices 15 additional virtual knots from Table \ref{table_status_unknown}.

\begin{figure}[htb]
\centerline{
\begin{tabular}{c}
\def\svgwidth{6in}
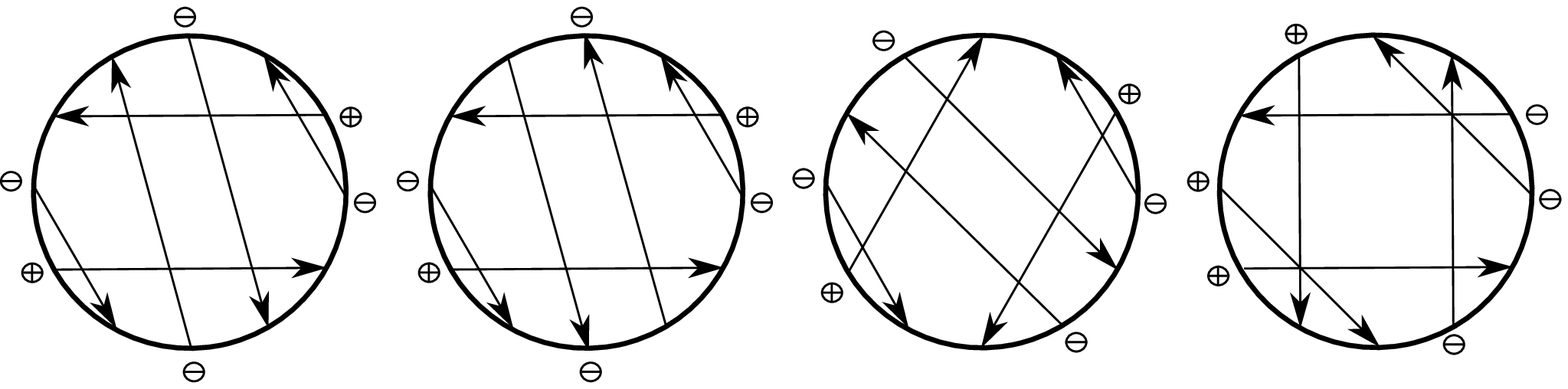\\ \\
\end{tabular}
} 
\caption{The last four (out of 92800) virtual knots of up to six classical crossings having unknown slice status.} \label{fig_4_unknown}
\end{figure}

\section{Questions for further research} \label{sec_questions}
\subsubsection*{(Question 1) Which of the virtual knots in Figure \ref{fig_4_unknown} is slice?} Figure \ref{fig_4_unknown} shows the four virtual knots whose slice status remains unknown. Their $\overline{\zh}$-invariants vanish for every order calculated. For 6.31455, the $\overline{\zh}$-invariants vanish up to order at least nine. For 6.52373 and 6.86951, they vanish up to order at least eight. For 6.62002, they vanish up to order at least seven. Further calculation was not possible due to length of the extended longitudes. For example, the extended longitude $\widetilde{\lambda}^{(9)}$ for 6.86951 has length 1109468 after reduction in \emph{GAP}.

\subsubsection*{(Question 2) If $\overline{\zh}_J(K)=0$ for all $J$, is $K$ concordant to an AC knot?}  The answer to the corresponding question for classical links is negative. Cochran and Orr \cite{cochran_orr} proved that there are links in $S^3$ that have vanishing $\bar{\mu}$-invariants but are not concordant to any boundary link.

\subsubsection*{(Question 3) Are the $\overline{\zh}$-invariants of finite-type?} In the case of knots in $S^3$, it follows from a result of Ng \cite{ng}, that there are no nontrivial rational valued finite-type concordance invariants of knots in $S^3$. For multi-component string links, the $\bar{\mu}$-invariants are of finite-type (see Lin \cite{lin_power}, Bar-Natan \cite{bar_natan_milnor}). Furthermore, Habegger and Mausbaum \cite{HM} showed that the only rational valued finite-type concordance invariants of string links are Milnor's concordance invariants and products thereof. Integer valued concordance invariants of virtual string links can be defined analogously using the Magnus expansion. Can they be used to define a universal finite-type concordance invariant of long virtual knots? 

\subsubsection*{(Question 4) Which $\overline{\zh}$-invariants control the vanishing of the generalized Alexander polynomial?} For $2$-component links in $S^3$, Traldi \cite{traldi} determined which $\bar{\mu}$-invariants determine the vanishing of the multi-variable Alexander polynomial: $\Delta_L(x,y)=0$ if and only if all invariants of the form $\bar{\mu}_{11\cdots 22}(L)$ are vanishing. What are the corresponding results for the generalized Alexander polynomial?

\subsubsection*{(Question 5) What is the correct form for the cycle relation for our $\bar{\mu}$-invariants?} It follows from a result of Cimasoni and Turaev \cite{ct} that the deviation from symmetry of the virtual linking number is captured by the intersection form. If the $2$-component virtual link $J \cup K$ is represented by a link $\mathscr{J} \sqcup \mathscr{K}$ in a thickened surface $\Sigma \times I$, $\vlk(J,K)-\vlk(K,J)=[\rho(\mathscr{J})]\cdot[\rho(\mathscr{K})]$. Here $\rho:\Sigma \times I \to \Sigma$ is projection to the first factor and $\cdot$ denotes the intersection form on $\Sigma$. What is the appropriate correction term for the cycle relation for the $\bar{\mu}$-invariants of order greater than two?

\subsubsection*{(Question 6) How many independent $\overline{\zh}$-invariants are there for every order?} The question for classical links was answered by Orr \cite{orr}. Spanning sets of small order were given in Table \ref{table_spanning}, but these are unlikely to be minimal.

\subsubsection*{(Question 7) Is there a theory of virtual link derivatives?} Cochran \cite{cochran} showed that the first non-vanishing $\bar{\mu}$-invariants of a link in $S^3$ can be efficiently calculated using link derivatives. Recall that link derivatives are found by intersecting Seifert surfaces of the link components.  Can something similar be developed for links in $\Sigma \times I$ with homologically trivial components? As the $\overline{\zh}$-invariants vanish for all homologically trivial knots in thickened surfaces, this would not help in calculating the $\overline{\zh}$-invariants. It may be useful, however, for understanding the $\bar{\mu}$-invariants of virtual links. 

\subsection*{Acknowledgments} The author is grateful for many helpful discussions with H. U. Boden, H. A. Dye, and R. Todd.  Thanks are also due to A. Nicas, P. Pongtanapaisan W. Rushworth for their advice about an earlier version of this paper. Calculations of the Rasmussen invariant were provided by H. Karimi and calculations for the generalized Alexander polynomial were provided by L. White. The author would also like to thank the Ohio State University (Marion campus) for funds and release time.

\begin{figure}[htb]
\centerline{
\begin{tabular}{cc}
\underline {6.5588:} & \begin{tabular}{c}
\def\svgwidth{4.9in}
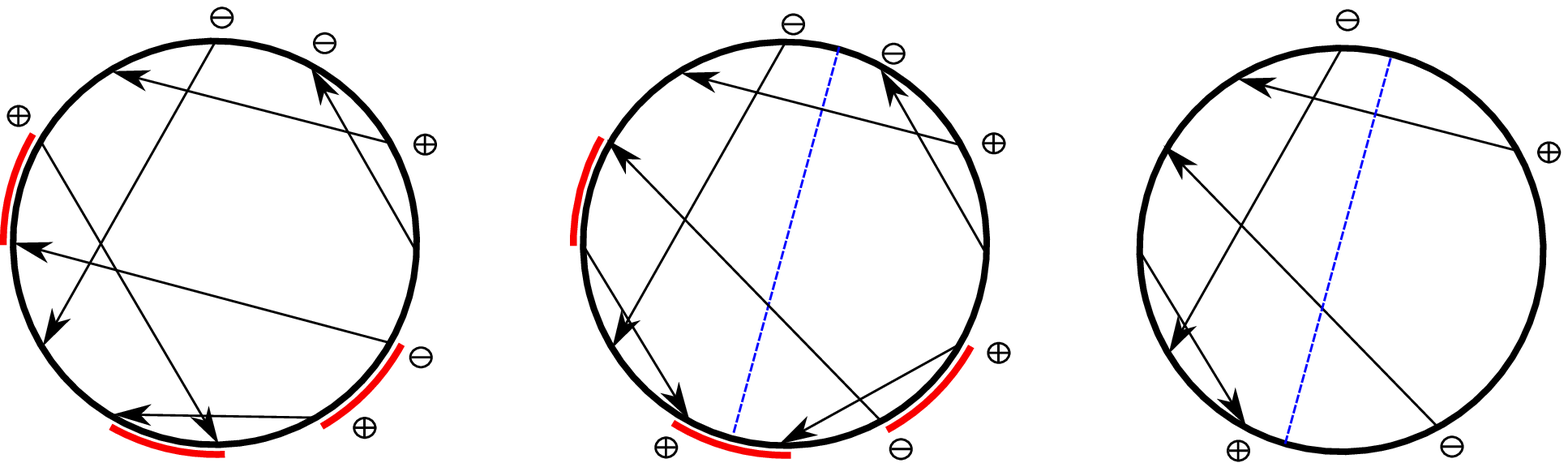
\end{tabular}
\\ 
\underline {6.7388:} & \begin{tabular}{c}
\def\svgwidth{4.9in}
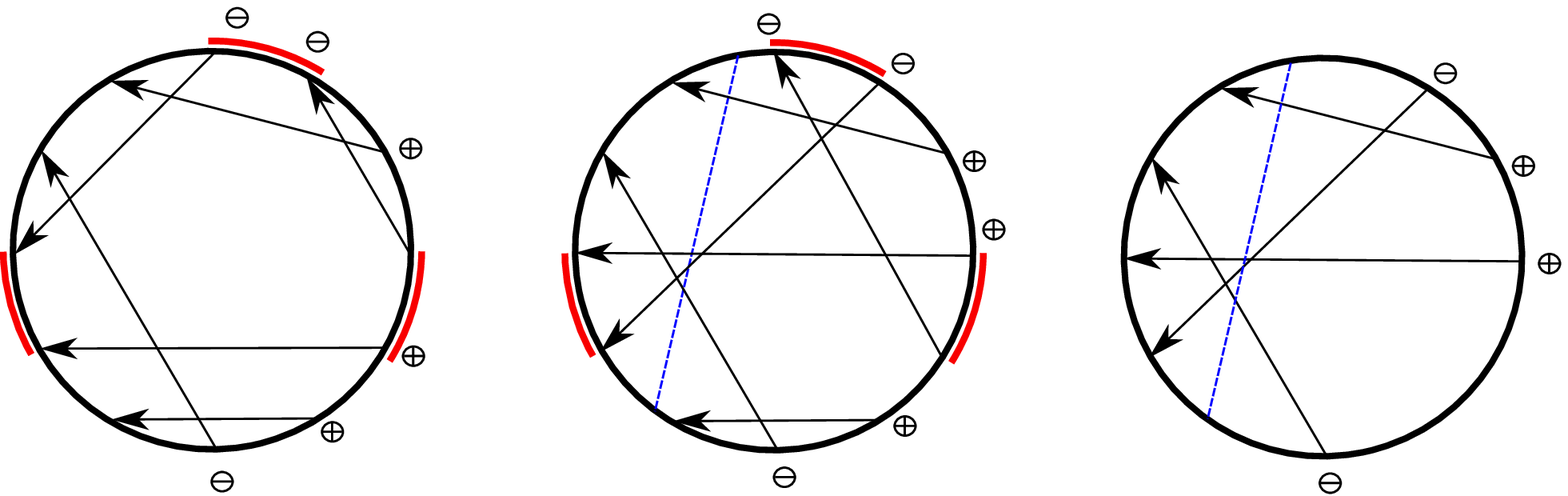
\end{tabular}
\\ 
\underline {6.33334:} & \begin{tabular}{c}
\def\svgwidth{4.9in}
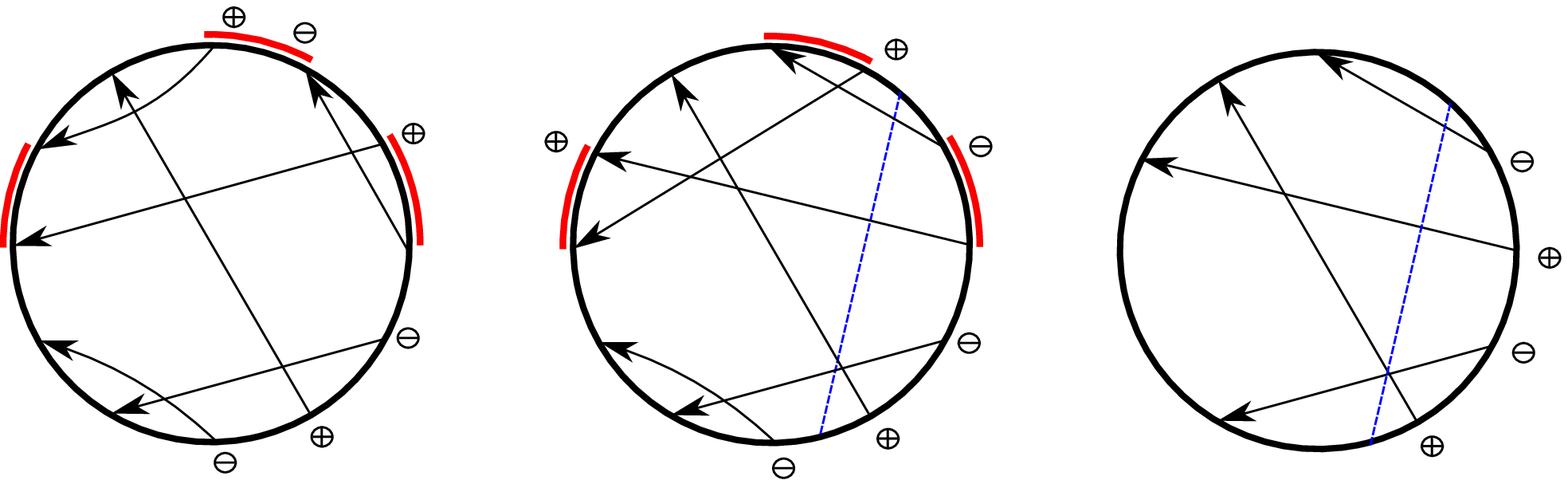
\end{tabular}
\\
\underline {6.37879:} & \begin{tabular}{c}
\def\svgwidth{4.9in}
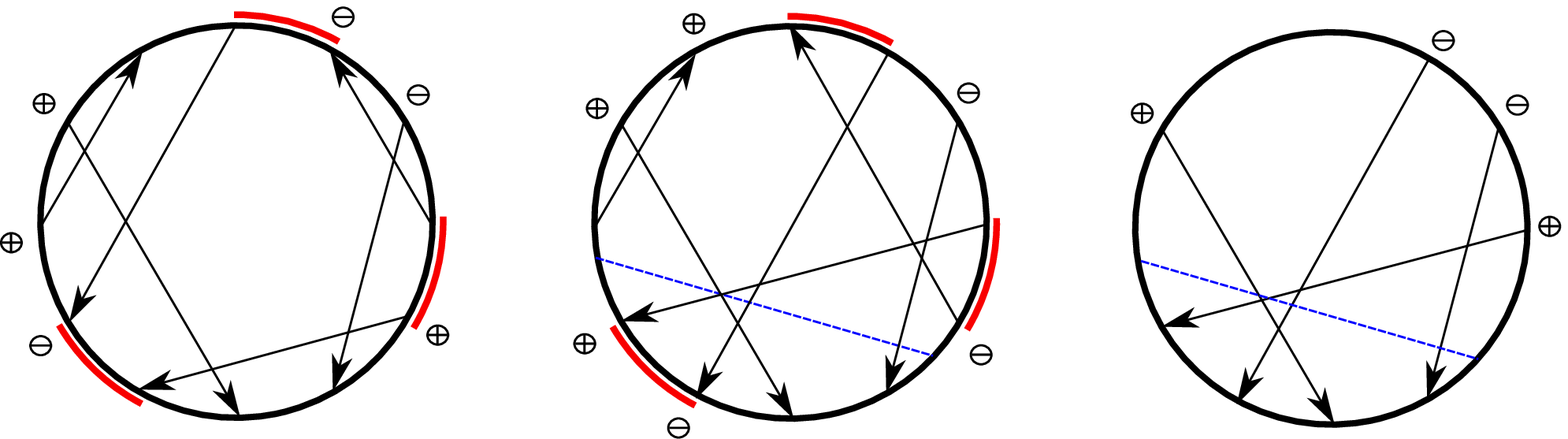
\end{tabular}
\\
\underline {6.38158:} & \begin{tabular}{c}
\def\svgwidth{4.9in}
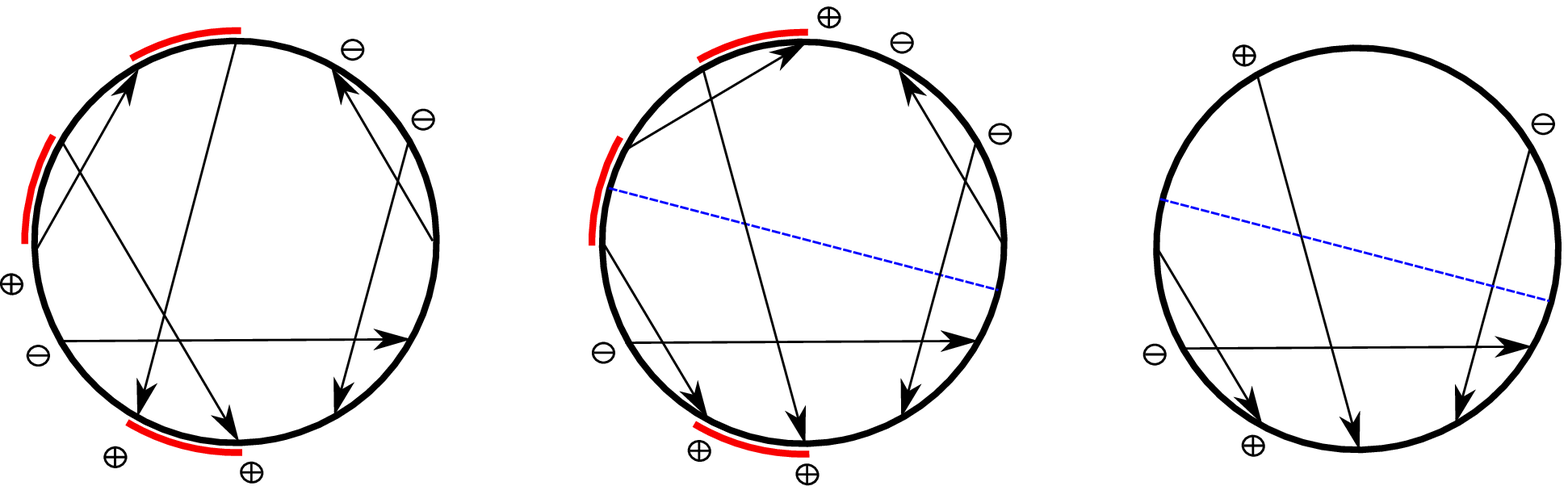
\end{tabular}
\end{tabular}
}
\caption{Slice movies for some six crossing knots.} \label{fig_5_done}
\end{figure}

\begin{figure}[htb]
\centerline{
\begin{tabular}{cc} 
\underline {6.38183:} & \begin{tabular}{c}
\def\svgwidth{4.9in}
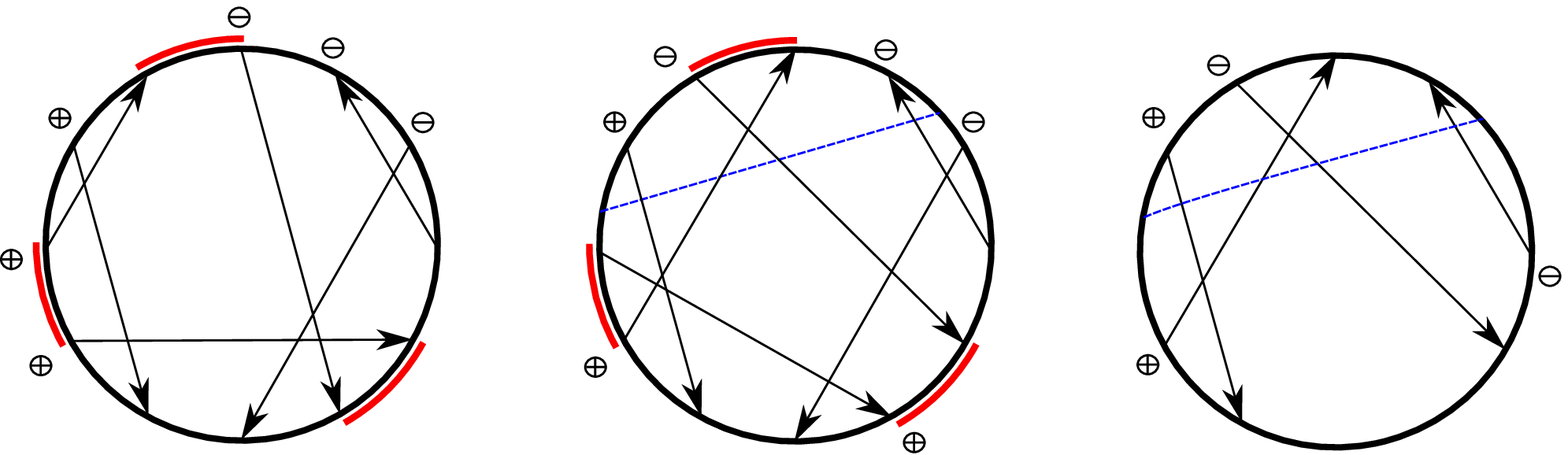
\end{tabular}
\\
\underline {6.47024:} & \begin{tabular}{c}
\def\svgwidth{4.9in}
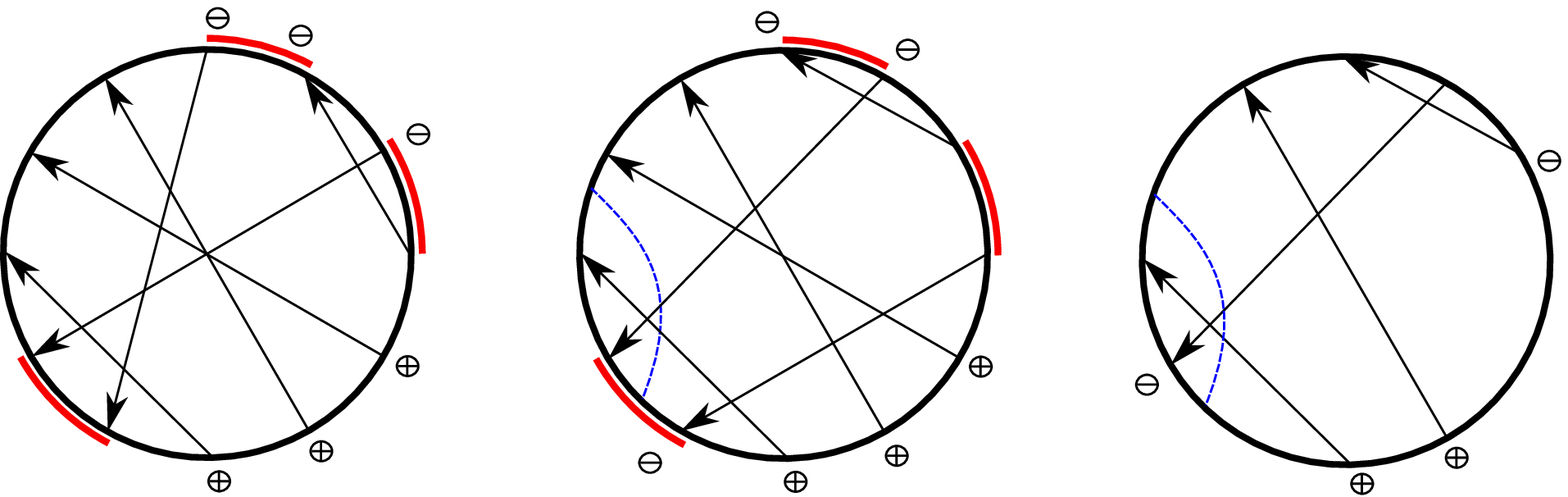
\end{tabular}
\\
\underline {6.49338:} & \begin{tabular}{c}
\def\svgwidth{4.9in}
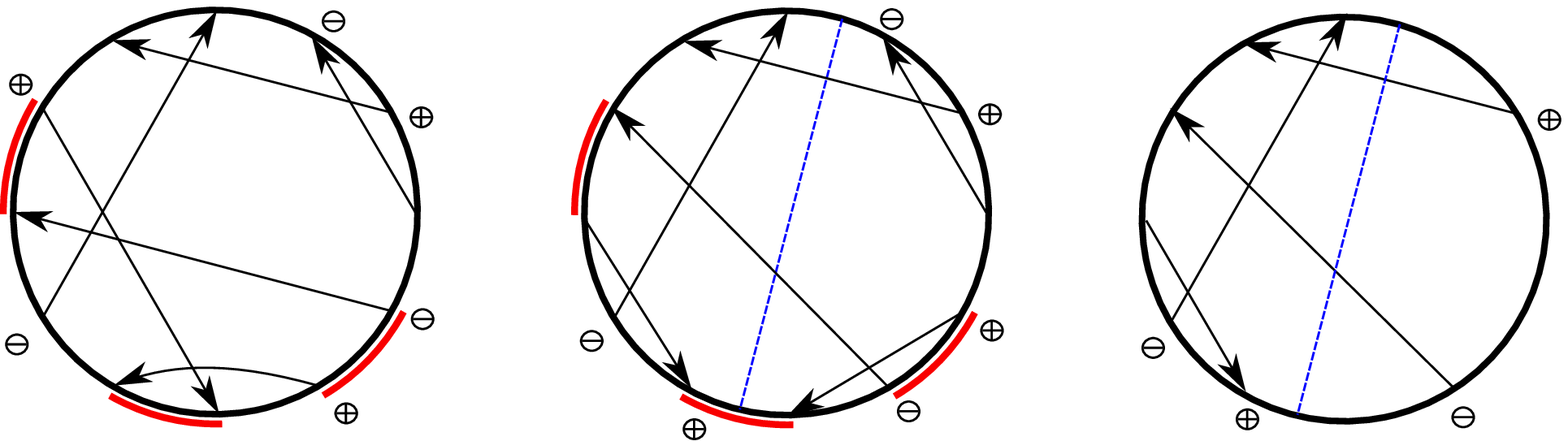
\end{tabular}
\\
\underline {6.69085:} & \begin{tabular}{c}
\def\svgwidth{4.9in}
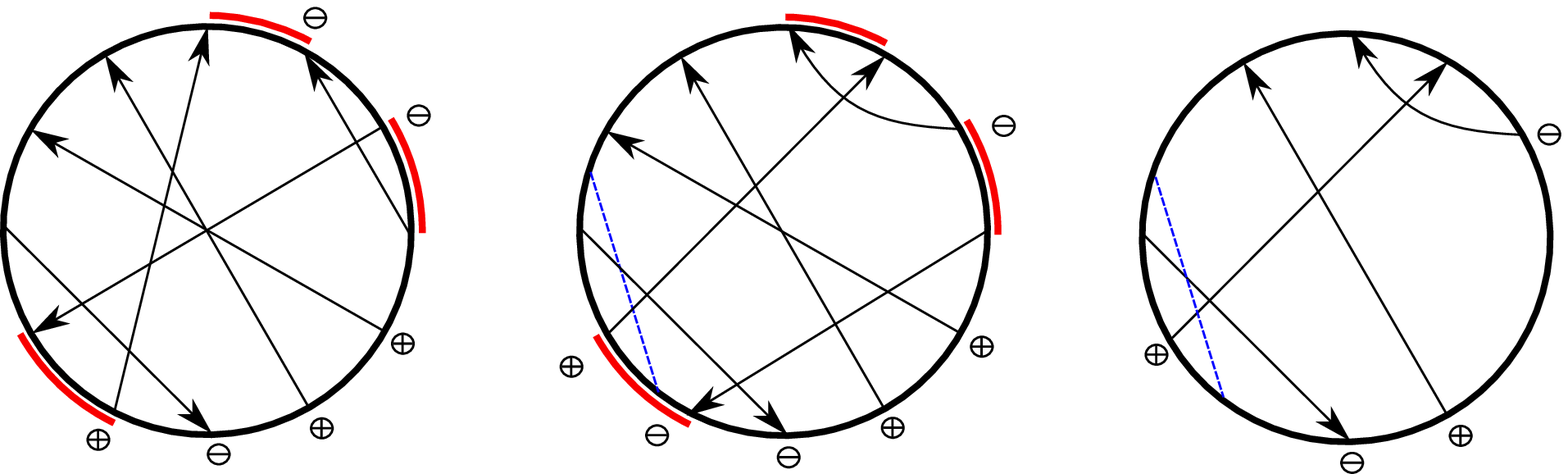
\end{tabular}
\\
\underline {6.70767:} & \begin{tabular}{c}
\def\svgwidth{4.9in}
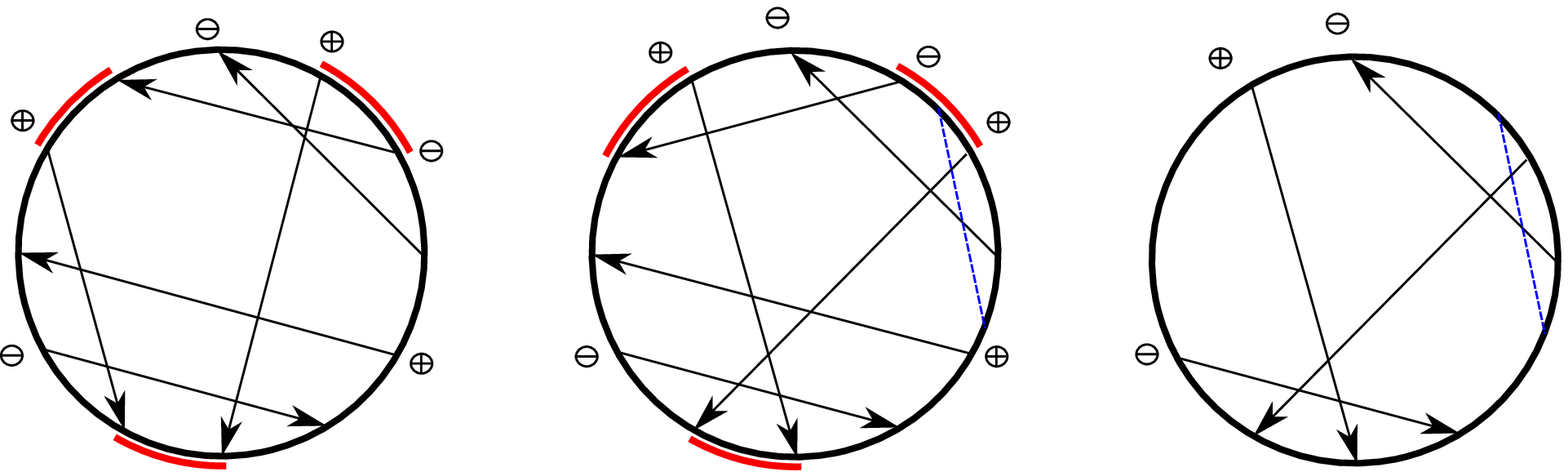
\end{tabular}
\end{tabular}
}
\caption{Slice movies for some six crossing knots.}\label{fig_5_done_II}
\end{figure}

\begin{figure}[htb]
\centerline{
\begin{tabular}{cc} 
\underline {6.71306:} & \begin{tabular}{c}
\def\svgwidth{4.9in}
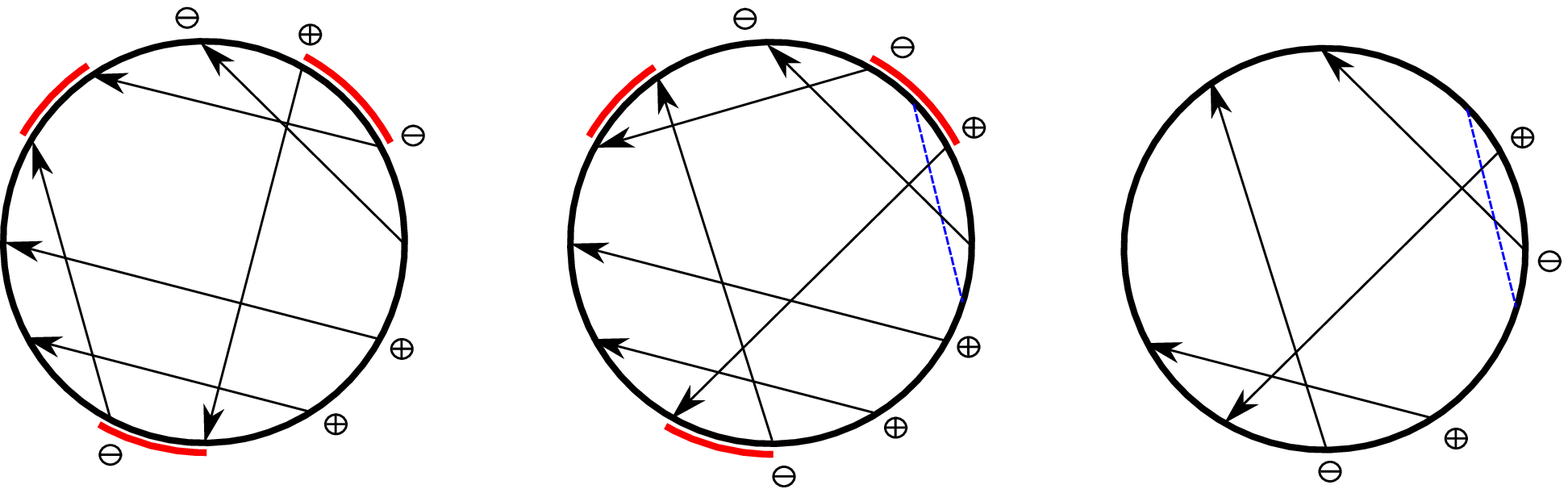
\end{tabular}
\\
\underline {6.72353:} & \begin{tabular}{c}
\def\svgwidth{4.9in}
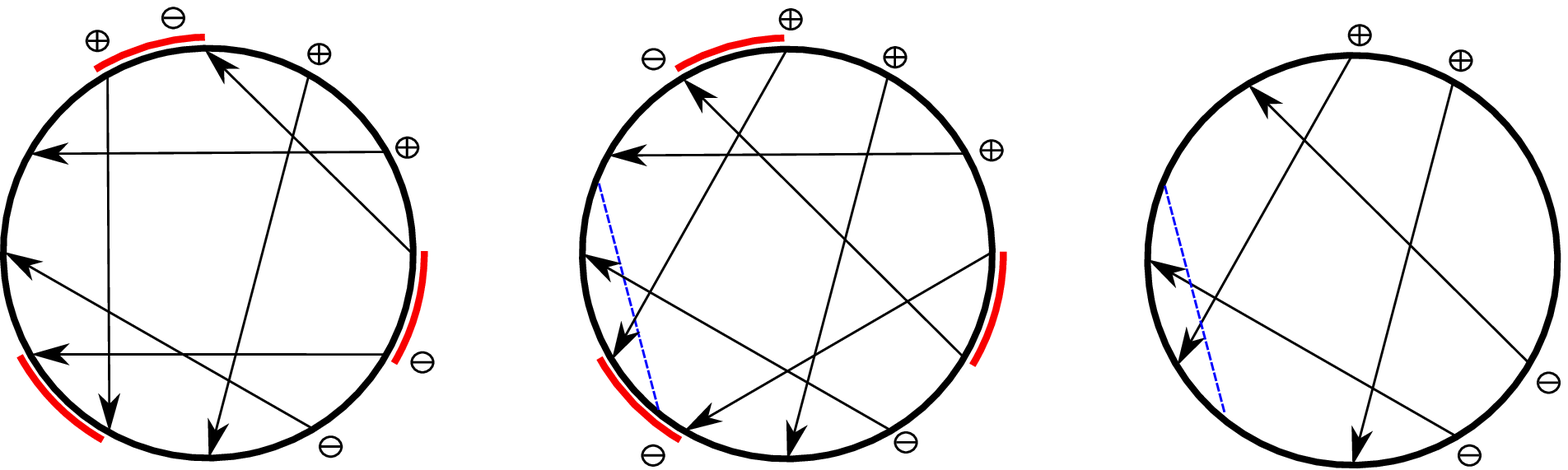
\end{tabular}
\\ 
\underline {6.76488:} & \begin{tabular}{c}
\def\svgwidth{4.9in}
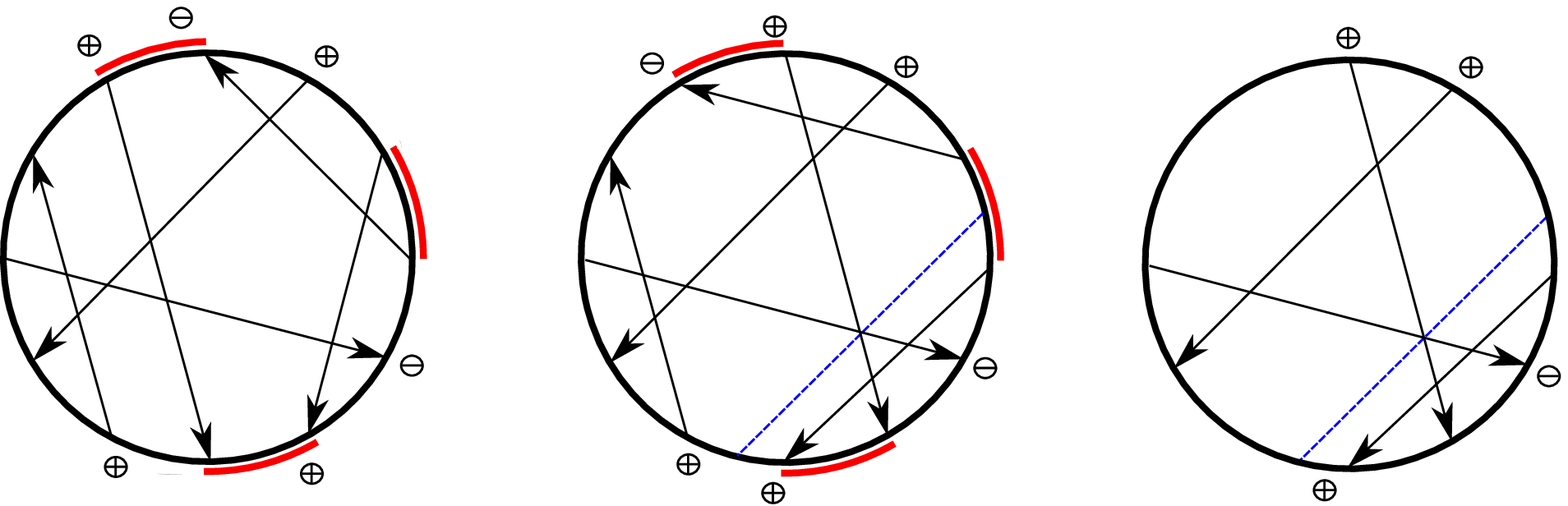
\end{tabular}
\\
\underline {6.77331:} & \begin{tabular}{c}
\def\svgwidth{4.9in}
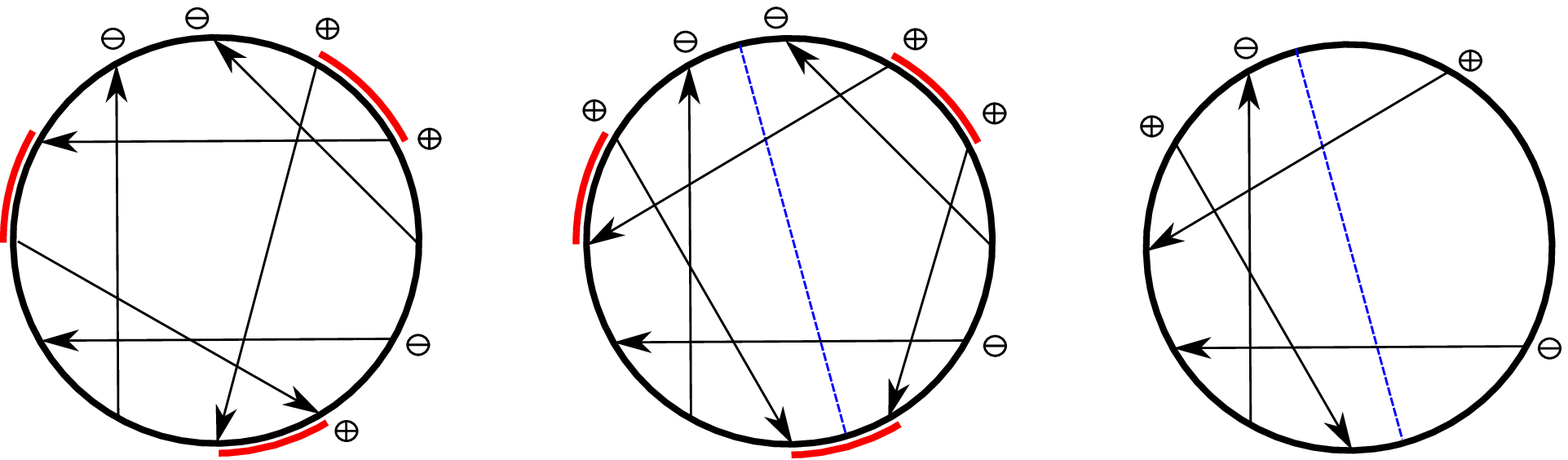
\end{tabular}
\\
\underline {6.77735} & \begin{tabular}{c}
\def\svgwidth{4.9in}
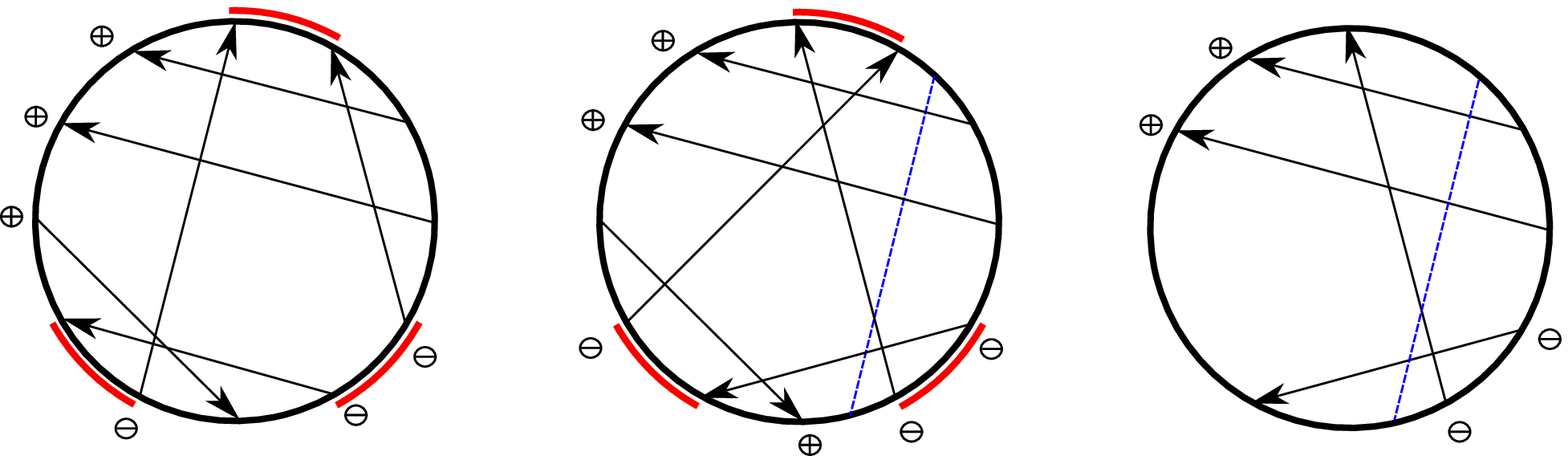
\end{tabular}
\end{tabular}
}
\caption{Slice movies for some more six crossing knots.} \label{fig_5_done_III}
\end{figure}
\clearpage

\bibliographystyle{gtart}

\bibliography{mil_plus_zh_bib}

\end{document}